\newtheorem{Proposition}{Proposition}[section]
\newtheorem{Lemma}[Proposition]{Lemma}
\newtheorem{Theorem}[Proposition]{Theorem}
\newtheorem{Corollary}[Proposition]{Corollary}
\newtheorem{Remark}[Proposition]{Remark}
\newtheorem{Example}[Proposition]{Example}
\newtheorem{Conjecture}[Proposition]{Conjecture}
\newbox\squ  % box character for ends of proofs
\def\ad{\operatorname{ad}}
\def\F{\mathfrak{F}}
\def\e{{\tt{e}}}
\def\f{{\tt{f}}}
\def\k{{\tt{k}}}
\def\gmod#1{\operatorname{Rep}(#1)}
\def\spa{\operatorname{span}}
\def\defect{\operatorname{def}}
\def\codeg{\operatorname{codeg}}
\def\C{{\mathbb C}}
\def\Q{{\mathbb Q}}
\def\Z{{\mathbb Z}}
\def\0{{\bar 0}}
\def\1{{\bar 1}}
\def\aff{{\operatorname{aff}}}
\def\St{{\mathscr{T}}}
\def\T{{\mathtt T}}
\def\Stab{{\mathtt S}}
\def\Par{{\mathscr P}^l}
\def\Comp{{\mathscr C}^l}
\def\Std{{\operatorname{Std}}}
\def\ind{{\operatorname{ind}}}
\def\res{{\operatorname{res}}}
\def\sh{{\operatorname{sh}}}
\def\cont{{\operatorname{cont}}}
\def\bi{\text{\boldmath$i$}}
\def\bj{\text{\boldmath$j$}}
\def\eps{{\varepsilon}}
\def\phi{{\varphi}}
\def\emptyset{{\varnothing}}
\def\Ga{{\Gamma}}
\def\la{{\lambda}}
\def\La{{\Lambda}}
\def\de{{\delta}}
\def\al{{\alpha}}
\def\si{{\sigma}}
\def\g{{\mathfrak g}}
\def\h{{\mathfrak h}}
\def\nslash{\:\notslash\:}
\def\iso{\,\tilde\rightarrow\,}
\def\underbar{\mathpalette\@underbar}
\def\@underbar#1#2{\settowidth{\@tempdimb}{$#1#2$}\@tempdimb=0.8\@tempdimb
                   \ooalign{$#1#2$\crcr%
                         \hfil\rule[-.5mm]{\@tempdimb}{.4pt}\hfil}}
\newdimen\hoogte    \hoogte=14pt    % hoogte  van hokje
\newdimen\breedte   \breedte=14pt   % breedte van hokje
\newdimen\dikte     \dikte=0.5pt    % dikte lijn
\newenvironment{young}{\begingroup
       \def\vr{\vrule height0.8\hoogte width\dikte depth 0.2\hoogte}
       \def\fbox##1{\vbox{\offinterlineskip
                    \hrule height\dikte
                    \hbox to \breedte{\vr\hfill##1\hfill\vr}
                    \hrule height\dikte}}
       \vbox\bgroup \offinterlineskip \tabskip=-\dikte \lineskip=-\dikte
            \halign\bgroup &\fbox{##\unskip}\unskip  \crcr }
       {\egroup\egroup\endgroup}
\def\diagram#1{\relax\ifmmode\vcenter{\,\begin{young}#1\end{young}\,}\else%
              $\vcenter{\,\begin{young}#1\end{young}\,}$\fi}
\begin{document}

\title[Graded Specht Modules]{Graded Specht Modules}
\author{Jonathan Brundan, Alexander Kleshchev and Weiqiang Wang}

\begin{abstract}
Recently,  the first two authors have defined a $\Z$-grading 
on group algebras of symmetric groups %and the corresponding Iwahori-Hecke algebras, 
and more generally on
the cyclotomic Hecke algebras of type $G(l,1,d)$.
In this paper we explain how to grade Specht modules over these algebras.
\end{abstract}
\thanks{{\em 2000 Mathematics Subject Classification:} 20C08.}
\thanks{Supported in part by NSF grants %no. 
DMS-0654147 and DMS-0800280.}
\address{Department of Mathematics, University of Oregon, Eugene, USA.}
\email{brundan@uoregon.edu}
\address{Department of Mathematics, University of Oregon, Eugene, USA.}
\email{klesh@uoregon.edu}
\address{Department of Mathematics, University of Virginia, 
Charlottesville, USA.}
\email{ww9c@virginia.edu}
\maketitle

\section{Introduction}\label{SIntro}

In \cite{BK}, the first two authors have exhibited 
an explicit grading on blocks of cyclotomic 
Hecke algebras of type $G(l, 1, d)$, which include as special cases blocks of symmetric groups 
and the corresponding Iwahori-Hecke algebras. This makes it possible 
to study {\em graded} representation theory of these algebras. 
However, it is not obvious that various important classes of modules are 
gradable. In this paper we show that Specht modules are gradable.
Specht modules are usually indecomposable, and
under that assumption the grading
we construct is unique up to automorphism and grading shift
(see \cite[Lemma 2.5.3]{BGS}).
We also derive a branching rule for our graded Specht modules.

Let us briefly formulate our main result in the special 
case of the symmetric group $\Sigma_d$ over a field
$F$ of characteristic $p$. In this case, the Specht modules are
denoted $S(\mu)$, parametrized by all partitions $\mu$ of $d$.
Note throughout this article ``Specht module'' refers to the 
module that is dual to the module $S^\mu$ introduced
originally into modular representation theory of the symmetric groups
by James \cite{Jbook}.
Drawing the Young diagram of $\mu$ in the usual ``English''
way, the {\em residue} of the node in the $a$th row and $b$th column
is defined to be $(b-a) \pmod{p}$, which is an element of
$I := \Z / p \Z \subseteq F$.
By a {\em removable} (resp.\ {\em addable}) node of
 $\mu$,
we mean a node $A$ (resp.\ $B$) that can be removed from (resp.\ added to)
the Young diagram of $\mu$ to obtain the Young diagram of another partition,
denoted $\mu_A$ (resp.\ $\mu^B$).
The {\em degree} $d_A(\mu)$ of a removable node $A$ of $\mu$
of residue $i \in I$ is defined to be
\begin{equation*}
\begin{split}
d_A(\mu) := 
\#\{\text{addable nodes of residue $i$
strictly below $A$}\}\\
-\#\{\text{removable nodes of residue $i$
strictly below $A$}\}.
\end{split}
\end{equation*}
If $\T$ is a standard $\mu$-tableau in the usual sense,
i.e. the nodes of the Young diagram of $\mu$ have been filled by 
entries $1,\dots,d$ so that it is both row-strict and column-strict,
we define the {\em degree} of $\T$ inductively by setting
$$
\deg(\T) := \left\{
\begin{array}{ll}
d_A(\mu)+\deg(\T_{\leq(d-1)})&\text{if $d > 0$,}\\
0&\text{if $d=0$,}
\end{array}\right.
$$
where $A$ denotes the node containing the entry $d$
and $\T_{\leq(d-1)}$ is the standard tableau
obtained from $\T$ by removing that node.

According to \cite{BK}, %(see also \cite{KL1} and \cite{Ro}), 
the
group algebra $F \Sigma_d$ possesses a {\em homogeneous presentation}
with certain generators
$$
\{e(\bi)\:|\:\bi \in I^d\}\cup\{y_1,\dots,y_d\}
\cup\{\psi_1,\dots,\psi_{d-1}\},$$ 
by means of which 
$F \Sigma_d$ becomes
a $\Z$-graded algebra. 
The elements $e(\bi)$ are mutually orthogonal idempotents
which we refer to as 
{\em weight idempotents}.
Our main result, Theorem~\ref{TMain},
shows that
the Specht module $S(\mu)$ can be made into a graded
$F\Sigma_d$-module with an explicitly constructed homogeneous basis $\{v_\T\}$
parametrized by all standard $\mu$-tableaux $\T$.
The basis vector $v_\T$ is of degree $\deg(\T)$
and weight $\bi^\T$,
where $\bi^\T = (i_1,\dots,i_d)$ denotes the {\em residue sequence}
of $\T$
defined by letting 
$i_r$ denote the residue of
the node of $\T$ containing the entry $r$.
Hence, the graded dimension of the $\bi$-weight space
$e(\bi) S(\mu)$ of $S(\mu)$ is equal to
$$
\sum_{\T} q^{\deg(\T)}
$$
summing over all standard $\mu$-tableaux $\T$
with $\bi^\T = \bi$.

The grading on $F \Sigma_d$ is compatible with the embedding
$F \Sigma_{d-1} \hookrightarrow F \Sigma_d$, so it is also natural to
consider branching rules for the graded Specht modules.
In
Theorem~\ref{TBr},
we show 
that $\res^{\Sigma_d}_{\Sigma_{d-1}} S(\mu)$ has a {\em graded Specht 
filtration}
with sections
$S(\mu_{A_1}) \langle d_{A_1}(\mu)\rangle, 
\dots, S(\mu_{A_n})\langle d_{A_n}(\mu)\rangle$,
where $A_1,\dots,A_n$ are the 
removable nodes of $\mu$. Here, for any graded
module $M = \bigoplus_{n \in \Z} M_n$, we write
$M\langle m \rangle$ for the module obtained by shifting the grading
up by $m$, i.e.
$$
M\langle m\rangle_n=M_{n-m}.
$$
It would be interesting to obtain an analogous theorem describing a
graded Specht filtration of the induced 
module $\ind_{\Sigma_d}^{\Sigma_{d+1}} S(\mu)$;
see Remark~\ref{conjectures} for further discussion.

By general theory (see \cite[Theorem 9.6.8]{NV}), 
irreducible modules are also gradable.
In fact it is known for $e$-restricted partitions $\mu$ that
$S(\mu)$ has a unique irreducible quotient $D(\mu)$, 
which is automatically graded, and the resulting $D(\mu)$'s give a complete set of 
irreducible graded modules (up to degree shifts).
So it makes sense to consider {\em graded composition
multiplicities} $[S(\la):D(\mu)]_q$ of graded Specht modules.

Everything so far also holds
on replacing $F \Sigma_d$ with the corresponding Iwahori-Hecke
algebra at a primitive $e$th root of unity over the ground field $\C$.
In that case, there is an obvious graded version of the
Lascoux-Leclerc-Thibon conjecture from \cite{LLT}
describing the graded composition multiplicities
$[S(\la):D(\mu)]_q$ in terms of
the transition matrix between the standard monomial and
dual canonical bases of the level one Fock space
associated to the quantized enveloping algebra
$U_q(\widehat{\mathfrak{sl}}_e)$.
The ungraded version of this conjecture was proved by Ariki \cite{Ariki}.
The first two authors have recently found
a proof of the graded version of the conjecture too,
starting from Ariki's theorem; see \cite{new}.

In the remainder of the article, we
develop the theory of graded Specht modules
more generally for cyclotomic Hecke algebras of arbitrary level as 
in \cite{Abook}.
All the results have analogues in the degenerate case
as in \cite{Kbook}, with the symmetric group picture described 
above being the degenerate case for level one.
The proofs in the degenerate case are entirely
analogous to the proofs in the non-degenerate case,
using the main result of \cite{BK} together with 
\cite[$\S$6]{AMR} for 
the required construction of standard bases of Specht modules
in that setting.

\subsection*{Acknowledgements}
We thank Mikhail Khovanov, Aaron Lauda, Andrew Mathas, and Rapha\"el Rouquier for useful discussions. 
The second author is grateful to the University of Virginia for support and hospitality. 

\section{Hecke algebras}\label{SNot}

In this section we
recall the definition of the cyclotomic Hecke algebras and 
make them into $\Z$-graded algebras following \cite{BK}.
We also introduce some special elements associated to a choice of
preferred reduced decompositions.

\iffalse
\subsection{Graded algebras and modules}\label{SSGr}
If $H$ is a $\Z$-graded algebra over some field, we let $\gmod{H}$ denote
the category of finite dimensional graded $H$-modules, writing $\cong$ 
for a homogeneous degree zero 
isomorphism. 
If $M=\bigoplus_{n\in\Z} M_n$ is a graded $H$-module and $m\in\Z$, denote by 
$M\langle m\rangle\in\gmod{H}$ the module obtained by 
shifting the grading by $m$:
$$
M\langle m\rangle_n=M_{n-m}.
$$
We denote by $K(\gmod{H})$ the Grothendieck group of the abelian category $\gmod{H}$. The Grothendieck group $K(\gmod{H})$ is considered as a $\Z[q,q^{-1}]$-module via 
$$
q^n[M]=[M\langle n\rangle].
$$
\fi

\subsection{Ground field and parameters}
Let $F$ be an arbitrary field, and 
$1\neq \xi \in F^\times$ be an invertible element. Let $e$ be the smallest positive integer such that $\xi^e = 1$,
setting $e := 0$
if no such integer exists. Define $I:=\Z/e\Z$. Then for $i\in I$, we have a well-defined element $\xi^i\in F$. 

\subsection{Quivers, Cartan integers, weights and roots}\label{SSDynkin}
Let $\Ga$ be the quiver with vertex set  $I$,
and a directed edge from $i$ to $j$ if $j  = i+1$.
Thus $\Gamma$ is the quiver of type $A_\infty$ if $e=0$
or $A_{e-1}^{(1)}$ if $e > 0$, with a specific orientation:
\begin{align*}
A_\infty&:\qquad\cdots \longrightarrow-2\longrightarrow -1 \longrightarrow 0 \longrightarrow 1 \longrightarrow 
2\longrightarrow \cdots\\
A_{e-1}^{(1)}&:\qquad0\rightleftarrows 1
\qquad
\begin{array}{l}
\\
\,\nearrow\:\:\:\searrow\\
\!\!2\,\longleftarrow\, 1
\end{array}
\qquad
\begin{array}{rcl}\\
0&\!\rightarrow\!&1\\
\uparrow&&\downarrow\\
3&\!\leftarrow\!&2
\end{array}
\qquad
\begin{array}{l}
\\
\:\nearrow\quad\searrow\\
\!4\qquad\quad \!1\\
\nnwarrow\quad\quad\,\sswarrow\\
\:\:3\leftarrow 2
\begin{picture}(0,0)
\put(-152.5,41){\makebox(0,0){0}}
\put(-13.5,53.5){\makebox(0,0){0}}
\end{picture}
\end{array}
\qquad \cdots
\end{align*}
The corresponding (symmetric) Cartan matrix
$(a_{i,j})_{i, j \in I}$ is defined by
\begin{equation}\label{ECM}
a_{i,j} := \left\{
\begin{array}{rl}
2&\text{if $i=j$},\\
0&\text{if $i \nslash j$},\\
-1&\text{if $i \rightarrow j$ or $i \leftarrow j$},\\
-2&\text{if $i \rightleftarrows j$}.
\end{array}\right.
\end{equation}
Here the symbols 
$i \rightarrow j$ and $j \leftarrow i$
both indicate that $j=i+1\neq i-1$,
$i \rightleftarrows j$ indicates that $j = i+1= i-1$,
and
$i \nslash j$
indicates that $j \neq i, i \pm 1$.

Following \cite{Kac}, let $(\h,\Pi,\Pi^\vee)$ be a realization of the Cartan matrix $(a_{ij})_{i,j\in I}$, so we have the simple roots $\{\al_i\mid i\in I\}$, the fundamental dominant weights $\{\La_i\mid i\in I\}$, and the normalized invariant form $(\cdot,\cdot)$ such that
$$
(\al_i,\al_j)=a_{ij}, \quad (\La_i,\al_j)=\de_{ij}\qquad(i,j\in I).
$$
Let $$
Q_+ := \bigoplus_{i \in I} \Z_{\geq 0} \alpha_i
$$ 
denote the positive part of the root lattice.

\subsection{Affine Hecke algebras}
Let $\Sigma_d$ denote the symmetric group on $d$ letters.
It acts naturally on the left on the set $I^d$ by place permutation.
Let $s_1,\dots,s_{d-1}$ be the simple transpositions in $\Sigma_d$
and $\ell:\Sigma_d\to\Z_{\geq 0}$ denote the corresponding length function.

Let $H_d^\aff$ denote
the {\em affine Hecke algebra} associated to $\Sigma_d$. 
Thus, $H_d^\aff$ is the $F$-algebra defined by generators
$$
T_1,\dots,T_{d-1}, X_1^{\pm 1}, \dots, X_d^{\pm 1}
$$
and relations
\begin{align}
\label{QPoly}
X_r^{\pm1} X_s^{\pm1}&=X_s^{\pm1} X_r^{\pm1}, \hspace{7.2mm} X_rX_r^{-1}=1;\\
T_r X_r T_r &= \xi X_{r+1},
\qquad\qquad
T_r X_s = X_{s}T_r \hspace{6mm}\text{if $s \neq r,r+1$};\label{QDAHA}\\
\label{QECoxeterQuad}
T_r^2 &= (\xi -1) T_r +\xi ;\\
T_rT_{r+1}T_r&=T_{r+1}T_rT_{r+1},
\qquad
T_rT_s=T_sT_r \hspace{7mm}\text{if $|r-s|>1$}.\label{QCoxeter}
\end{align}

If $w=s_{r_1}s_{r_2}\dots s_{r_m}$ is a reduced decomposition in $\Sigma_d$, then 
$T_w:=
T_{r_1}T_{r_2}\dots T_{r_m}$ is a well-defined element of $H_d^\aff$.

Recall the {\em Bruhat order} $\leq$ on $\Sigma_d$. This can be 
defined as follows: 
for $u,w\in \Sigma_d$ we have $u\leq w$ if and only if 
$u = s_{r_{a_1}}\dots s_{r_{a_b}}$ for some
$1 \leq a_1  < \dots < a_b \leq m$, where $w = s_{r_1} \dots s_{r_m}$
is a reduced decomposition for $w$; see e.g. \cite[Theorem 5.10]{Hu}.
The following lemma is an easy consequence of this definition.

\begin{Lemma}\label{LTBruhat}%{\rm \cite{}}%{\bf ()}
Let $w=s_{r_1}\dots s_{r_m}$ be a reduced decomposition in $\Sigma_d$, and $1\leq a_1<\dots<a_b\leq m$. Then the element $T_{r_{a_1}}\dots T_{r_{a_b}}$ can be written as a linear combination of elements $T_u$ with $u\leq w$. 
\end{Lemma}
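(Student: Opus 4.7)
The plan is to induct on $m$, the length of the reduced decomposition of $w$. The base case $m=0$ forces $b=0$, so the product is the identity element $1 = T_e$ and $e \leq w = e$ trivially. For the inductive step I distinguish two cases according to whether $a_b < m$ or $a_b = m$.

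If $a_b < m$, the subword $a_1 < \dots < a_b$ is contained in $s_{r_1}\dots s_{r_{m-1}}$, which is a reduced expression of length $m-1$ as a prefix of a reduced expression. The inductive hypothesis writes $T_{r_{a_1}}\dots T_{r_{a_b}}$ as a linear combination of $T_u$ with $u \leq s_{r_1}\dots s_{r_{m-1}}$, and $s_{r_1}\dots s_{r_{m-1}} \leq w$ by the subword characterization recalled just above the lemma, so $u \leq w$ as required.

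If $a_b = m$, I factor the product as $(T_{r_{a_1}}\dots T_{r_{a_{b-1}}})\, T_{r_m}$ and apply the inductive hypothesis to the prefix, viewed as a subword of the reduced expression $v := s_{r_1}\dots s_{r_{m-1}}$. This reduces the problem to showing that for each $u \leq v$ the product $T_u T_{r_m}$ is a linear combination of $T_x$ with $x \leq w$. When $\ell(u s_{r_m}) > \ell(u)$ one has $T_u T_{r_m} = T_{u s_{r_m}}$, and the standard lifting property of the Bruhat order (applied to $u \leq v$ together with $\ell(v s_{r_m}) = \ell(w) > \ell(v)$, which holds because $s_{r_1}\dots s_{r_m}$ is reduced) yields $u s_{r_m} \leq w$. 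When instead $\ell(u s_{r_m}) < \ell(u)$, I choose a reduced decomposition of $u$ ending in $s_{r_m}$, so that $T_u = T_{u s_{r_m}} T_{r_m}$; the quadratic relation $T_{r_m}^2 = (\xi - 1)T_{r_m} + \xi$ then gives
\[
T_u T_{r_m} \;=\; T_{u s_{r_m}} T_{r_m}^2 \;=\; (\xi - 1)\, T_u + \xi\, T_{u s_{r_m}},
\]
and here $u \leq v \leq w$ while $u s_{r_m} < u$ in Bruhat order, so both terms are of the required form.

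The only non-routine ingredient is the lifting property of the Bruhat order used in the length-increasing subcase; the length-decreasing subcase is handled entirely by the quadratic Hecke relation. I do not anticipate a serious obstacle: once the induction is organised so that at each step one multiplies on the right by a single generator $T_{r_m}$, the argument becomes a bookkeeping exercise combining the subword property and a single instance of the quadratic relation.
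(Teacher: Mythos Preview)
Your argument is correct. The paper itself does not give a proof of this lemma, merely noting that it is ``an easy consequence'' of the subword characterisation of the Bruhat order recalled immediately before the statement; your induction on $m$, with the two-case split according to whether $a_b = m$ and the use of the lifting property together with the quadratic relation, is a standard and entirely valid way to supply the omitted details.
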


\iffalse
\begin{proof}
Apply induction on $b$, the base case $b=0$ being clear. Let $b>1$ and write $w=w_1s_{r_{a_1}}w_2$. By the inductive assumption, $T_{r_{a_2}}\dots T_{r_{a_b}}$ is a linear combination of elements $T_u$ with $u\leq w_2$. Note that for any such $u$ we have $u\leq s_{r_{a_1}}w_2$ and $s_{r_{a_1}}u\leq s_{r_{a_1}}w_2$. The result follows.
\end{proof}
\fi

\subsection{Cyclotomic Hecke algebras}
{For the remainder of the article,} we fix 
a positive integer $l$, elements $k_1,\dots,k_l\in I$, and set  
\begin{equation}\label{EFixedWeight}
\La:=\La_{k_1}+\dots+\La_{k_l}.
\end{equation}
The {\em cyclotomic Hecke algebra} $H^\La_d$ corresponding to $\La$
is the following quotient of $H_d^\aff$:
\begin{equation}\label{ECHA}
H_d^\La :=
H_d^\aff \Big/ \big\langle \,\textstyle\prod_{i\in I}(X_1-\xi ^i)^{(\La,\al_i)}\,\big\rangle=
H_d^\aff \Big/ \big\langle \,\textstyle\prod_{m=1}^l(X_1-\xi ^{k_m})\,\big\rangle.
\end{equation}
It goes back to \cite{AK} that  
$$
\{T_wX_1^{a_1}\dots X_d^{a_d}\mid w\in \Sigma_d,\ 0\leq a_1,\dots,a_d<l\}
$$
is a basis of $H_d^\La$.

%We refer to this algebra simply as the {\em   cyclotomic Hecke algebra}.

%We define the {\em Jucys-Murphy elements}: 
%$$
%l_r&:=(1,r)+(2,r)+\dots+(r-1,r) \quad\hbox{if $q=1$;}\\
%L_r:= q^{1-r}T_{r-1}T_{r-2}\dots T_1T_1T_2\dots T_{r-1}\in H_d^\La \qquad(1\leq r\leq d)
%$$
%($L_1$ is interpreted as $1$). 
%Now there is a surjective algebra map
%$$
%\ev:H_d^\aff\to H_d, \quad T_w\mapsto T_w,\ X_r\mapsto L_r\qquad(w\in \Sigma_d,\ 1\leq r\leq d)
%$$
%called the {\em evaluation map}. The kernel of $\ev$ is the ideal $J:=H_d(X_1-1)H_d$ generated by $X_1-1$. 

%Given an $H_d$-module $M$, we can inflate it along $\ev$ to get an $H_d^\aff$-module $\inf M$. The functor
%$$
%\inf:\mod{H_d}\to\mod{H_d^\aff}
%$$
%has left adjoint
%$$
%\pr:\mod{H_d^\aff}\to\mod{H_d},
%$$
%where $\pr(N)=N/J N$ for $N\in\mod{H_d^\aff}$ is the largest quotient of $N$ which factors through $\ev$. 

%We will occasionally use a shifted versions of the evaluation map: for $i\in I$, define
%$$
%\ev^i: H_d^\aff\to H_d, \quad T_w\mapsto T_w,\ X_r\mapsto q^i L_r\qquad(w\in \Sigma_d,\ 1\leq r\leq d). 
%$$

Let $M$ be a finite dimensional $H_d^\La$-module. By 
\cite[Lemma 4.7]{G}, the eigenvalues of each $X_r$ on $M$ are of the 
form $\xi ^i$ for $i\in I$. So 
$M$ decomposes as a direct sum 
$M = \bigoplus_{\bi \in I^d} M_\bi$
of its {\em weight spaces} 
$$
M_\bi := \{v \in M\:|\:(X_r-\xi ^{i_r})^N v = 0
\text{ for all $r=1,\dots,d$ and $N \gg 0$}\}.
$$
There is a natural system 
$\{e(\bi)\:|\:\bi \in I^d\}$
of mutually orthogonal idempotents in $H_d^\La$, called
{\em weight idempotents}, such that 
\begin{equation}\label{EWS}
e(\bi) M=M_{\bi}\qquad(\bi\in I^d)
\end{equation}
for any finite dimensional $H_d^\La$-module $M$, 
see \cite[\S4.1]{BK} for more details. 
Since $H_d^\La$ is finite dimensional, all but finitely many of the
$e(\bi)$'s are zero, and their sum is the identity element of $H_d^\La$.

\iffalse
For $\al\in Q_+$ of height $d$,
let 
\begin{equation}\label{bal}
b(\alpha) := \sum_{\bi \in I^\alpha} e(\bi) \in H^\La_d.
\end{equation}
As a consequence of \cite{LM}, 
$b(\alpha)$ is either zero or it is a primitive central idempotent
in $H_d^\La$.
Hence the algebra
\begin{equation}\label{fe}
H_\al^\La := b(\alpha) H_d^\La
\end{equation}
is either zero or it is a single {\em block}
of the algebra $H_d^\La$.
Moreover:
\begin{equation}\label{blockdec}
H_d^\La = \bigoplus_{\alpha \in Q_+,\ \height(\alpha)=d} H^\La_\alpha.
\end{equation}
For $h\in H_d^\La$, 
we often abuse notation by
writing simply $h$ also for the projection 
$b(\al)h\in H_\al^\La$.
%So $H^\La_\alpha$ again has generators $T_1,\dots,T_{d-1}$, $X_1^{\pm 1},\dots,X_d^{\pm 1}$ if $q \neq 1$, or $s_1,\dots,s_{d-1}, x_1,\dots,x_d$ if $q=1$. 
\fi

\subsection{Homogeneous presentation}
The main result of \cite{BK} gives an explicit $\Z$-grading on
$H^\La_d$.
To formulate this explicitly,
define new elements of $H_d^\La$ as follows. 
First, set
\begin{equation}\label{QEPolKL}
y_r:=
\sum_{\bi\in I^d}(1 - \xi ^{-i_r}X_r)e(\bi).
\end{equation}
Next, for every $\bi\in I^d$, we put  
\begin{equation}\label{X}
y_r(\bi) := \xi ^{i_r}(1-y_r) \in F[[y_1,\dots,y_d]],
\end{equation} and define power series
$P_r(\bi) \in F[[y_r,y_{r+1}]]$ by setting
\begin{equation}
\label{QEP}
P_r(\bi):=\begin{cases}
1 & \text{if $i_r=i_{r+1}$},\\
(1-\xi )\left(1-y_r(\bi) y_{r+1}(\bi)^{-1}\right)^{-1} & \text{if $i_r\neq i_{r+1}$}.
\end{cases}
\end{equation}
It is easy to see that $y_r$'s are nilpotent in $H_d^\La$, so $P_r(\bi)$ %and $P_r(\bi)$ 
(or any other power series in the $y_r$'s) can be interpreted as  elements of $H_d^\La$. 
Now, put 
\begin{equation}\label{QEQ}
Q_r(\bi):=
\left\{
\begin{array}{ll}
1-\xi +\xi y_{r+1}-y_r&\text{if $i_r = i_{r+1}$},\\
(y_r(\bi)-\xi y_{r+1}(\bi)))/(y_{r}(\bi)-y_{r+1}(\bi)) &\hbox{if $i_r \nslash i_{r+1}$},\\
(y_r(\bi)-\xi y_{r+1}(\bi))/(y_r(\bi)-y_{r+1}(\bi))^2 &\hbox{if $i_r\rightarrow i_{r+1}$},\\
\xi ^{i_r} &\hbox{if $i_r\leftarrow i_{r+1}$},\\
\xi ^{i_r}/(y_{r}(\bi)-y_{r+1}(\bi)) &\hbox{if $i_r \rightleftarrows i_{r+1}$}.
\end{array}
\right.
\end{equation}
Note in the fractions on the right hand side above that the numerator is divisible by the denominator
in $F[[y_r,y_{r+1}]]$,
so this makes sense, and 
again, $Q_r(\bi)$ %and $Q_r(\bi)$ 
can be interpreted as elements of $H_d^\La$. The following has been noted in \cite{BK} and is easy to check:

\begin{Lemma}\label{LConstantTerm}%{\rm \cite{}}%{\bf ()}
Regarded as an element of $F[[y_r,y_{r+1}]]$, $Q_r(\bi)$ has non-zero constant term. In particular, $Q_r(\bi)$ is invertible in $H_d^\La$. 
\end{Lemma}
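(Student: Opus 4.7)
The plan is a direct case-by-case inspection of the constant term of $Q_r(\bi)$ as a power series in $F[[y_r,y_{r+1}]]$, after which the invertibility assertion in $H_d^\La$ is an automatic consequence of the nilpotency of $y_1,\dots,y_d$ noted just before the lemma.

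The preliminary observation I would record is that $y_r(\bi)=\xi^{i_r}(1-y_r)$ has constant term $\xi^{i_r}$, and that whenever $i_r\neq i_{r+1}$ in $I=\Z/e\Z$ the difference $y_r(\bi)-y_{r+1}(\bi)$ has nonzero constant term $\xi^{i_r}-\xi^{i_{r+1}}$, hence is invertible in $F[[y_r,y_{r+1}]]$. This makes the fractions appearing in cases 2, 3 and 5 of \eqref{QEQ} honest elements of the power series ring, and lets one read off their constant terms directly from those of numerator and denominator.

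I would then run through the five cases of \eqref{QEQ}. The two easy ones are $i_r=i_{r+1}$, where the constant term is $1-\xi\neq 0$ by the standing assumption $\xi\neq 1$, and $i_r\leftarrow i_{r+1}$, where it is just $\xi^{i_r}\neq 0$. For $i_r\nslash i_{r+1}$ the constant term is $(\xi^{i_r}-\xi^{i_{r+1}+1})/(\xi^{i_r}-\xi^{i_{r+1}})$, which is nonzero precisely because the case hypothesis forbids $i_{r+1}=i_r$ (killing the denominator) and $i_{r+1}=i_r-1$ (killing the numerator). For $i_r\rightleftarrows i_{r+1}$ a short calculation gives $1/(1-\xi)\neq 0$. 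The only case requiring a moment's thought is $i_r\rightarrow i_{r+1}$: substituting $\xi^{i_{r+1}}=\xi^{i_r+1}$ and cancelling a common factor $1-\xi$ between the numerator constant $\xi^{i_r}(1-\xi^2)$ and denominator constant $\xi^{2i_r}(1-\xi)^2$ produces the constant term $(1+\xi)/\bigl(\xi^{i_r}(1-\xi)\bigr)$; this is nonzero because $\xi=-1$ would force $e=2$ and hence $i_{r+1}=i_r-1$ as well, contradicting the case hypothesis.

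For the final sentence of the lemma, nilpotency of the $y_r$'s in $H_d^\La$ means that any power series in $y_r,y_{r+1}$ with invertible constant term has an inverse given by the usual geometric series expansion, which terminates after finitely many terms. Applied to $Q_r(\bi)$, this yields invertibility in $H_d^\La$. The main ``obstacle'' is therefore only bookkeeping; the one slightly delicate point is the need, in the arrow case, to notice that the hypothesis $i_{r+1}\neq i_r-1$ silently excludes the degenerate specialization $\xi=-1$ that would otherwise make the constant term vanish.
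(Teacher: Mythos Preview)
Your proof is correct. The paper does not actually prove this lemma; it merely states that the result ``has been noted in \cite{BK} and is easy to check,'' so your case-by-case verification is precisely the routine computation the authors are inviting the reader to perform, and you have carried it out accurately---including the one subtle point in the $i_r\rightarrow i_{r+1}$ case where the hypothesis $i_{r+1}\neq i_r-1$ rules out $e=2$ and hence $\xi=-1$.
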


Finally set
\begin{equation}\label{QQCoxKL}
\psi_r:=
%\left\{
%\begin{array}{ll}
\sum_{\bi\in I^d}(T_r+P_r(\bi))Q_r(\bi)^{-1}e(\bi). %&\hbox{if $q=1$;}\\
%\sum_{\bi\in I^\al}(T_r+P_r(\bi))Q_r(\bi)^{-1}e(\bi) &\hbox{if $q\neq 1$.}
%\end{array}
%\right.
\end{equation}
Thus, we have introduced the following new elements of $H_d^\La$:
\begin{equation}\label{EKLGens}
\{e(\bi)\:|\: \bi\in I^d\}\cup\{y_1,\dots,y_{d}\}\cup\{\psi_1, \dots,\psi_{d-1}\}.
\end{equation}
The presentation for $H^\La_d$ appearing in the following theorem
originates in work of Khovanov and Lauda
\cite{KL1} and Rouquier \cite{Ro}.
It makes $H^\La_d$ into a $\Z$-graded algebra
in a canonical way.

\begin{Theorem} \label{TBK}
{\rm \cite{BK}} %{\bf ()}
The algebra $H_d^\La$ is generated by the elements 
(\ref{EKLGens}) subject 
only to the following relations for $\bi,\bj\in I^d$ and all 
admissible $r, s$:
\begin{align}
y_1^{(\La,\al_{i_1})}e(\bi)&=0;\label{ERCyc}\\
e(\bi) e(\bj) &= \de_{\bi,\bj} e(\bi);
\hspace{11.3mm}{\textstyle\sum_{\bi \in I^d}} e(\bi) = 1;\label{R1}\end{align}\begin{align}
y_r e(\bi) &= e(\bi) y_r;
\hspace{20mm}\psi_r e(\bi) = e(s_r{ }\bi) \psi_r;\label{R2PsiE}\\
\label{R3Y}
y_r y_s &= y_s y_r;\\
\label{R3YPsi}
\psi_r y_s  &= y_s \psi_r\hspace{42.4mm}\text{if $s \neq r,r+1$};\\
\psi_r \psi_s &= \psi_s \psi_r\hspace{41.8mm}\text{if $|r-s|>1$};\label{R3Psi}\\
\psi_r y_{r+1} e(\bi) 
&= 
\left\{
\begin{array}{ll}
(y_r\psi_r+1)e(\bi) &\hbox{if $i_r=i_{r+1}$},\\
y_r\psi_r e(\bi) \hspace{28mm}&\hbox{if $i_r\neq i_{r+1}$};
\end{array}
\right.
\label{R6}\\
y_{r+1} \psi_re(\bi) &=
\left\{
\begin{array}{ll}
(\psi_r y_r+1) e(\bi) 
&\hbox{if $i_r=i_{r+1}$},\\
\psi_r y_r e(\bi)  \hspace{28mm}&\hbox{if $i_r\neq i_{r+1}$};
\end{array}
\right.
\label{R5}\\
\psi_r^2e(\bi) &= 
\left\{
\begin{array}{ll}
0&\text{if $i_r = i_{r+1}$},\\
e(\bi)&\text{if $i_r \nslash i_{r+1}$},\\
(y_{r+1}-y_r)e(\bi)&\text{if $i_r \rightarrow i_{r+1}$},\\
(y_r - y_{r+1})e(\bi)&\text{if $i_r \leftarrow i_{r+1}$},\\
(y_{r+1} - y_{r})(y_{r}-y_{r+1}) e(\bi)\!\!\!&\text{if $i_r \rightleftarrows i_{r+1}$};
\end{array}
\right.
 \label{R4}\\
\psi_{r}\psi_{r+1} \psi_{r} e(\bi)
&=
\left\{\begin{array}{ll}
(\psi_{r+1} \psi_{r} \psi_{r+1} +1)e(\bi)&\text{if $i_{r+2}=i_r \rightarrow i_{r+1}$},\\
(\psi_{r+1} \psi_{r} \psi_{r+1} -1)e(\bi)&\text{if $i_{r+2}=i_r \leftarrow i_{r+1}$},\\
\big(\psi_{r+1} \psi_{r} \psi_{r+1} -2y_{r+1}
\\\qquad\:\quad +y_r+y_{r+2}\big)e(\bi)
\hspace{2.4mm}&\text{if $i_{r+2}=i_r \rightleftarrows i_{r+1}$},\\
\psi_{r+1} \psi_{r} \psi_{r+1} e(\bi)&\text{otherwise}.
\end{array}\right.
\label{R7}
\end{align}
In particular, there is a unique $\Z$-grading on
$H^\La_d$ such that
$e(\bi)$ is of degree 0,
$y_r$ is of degree $2$, and
$\psi_r e(\bi)$ is of degree $-a_{i_r,i_{r+1}}$
for each $r$ and $\bi \in I^d$.
\end{Theorem}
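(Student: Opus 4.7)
Since this theorem is stated as the main result of \cite{BK}, the plan is to reduce the verification to a sequence of explicit checks using the definitions (\ref{QEPolKL})--(\ref{QQCoxKL}) of the new generators in terms of $T_r$, $X_r$, and $e(\bi)$. The proof naturally splits into four pieces: (i) the new generators do generate $H_d^\La$; (ii) they satisfy all the listed relations; (iii) those relations are a complete set; (iv) the declared degrees are consistent with the relations.

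\textbf{Generation and homogeneity.} First, I would observe that the formula (\ref{QEPolKL}) is invertible: $X_r e(\bi) = \xi^{i_r}(1-y_r)e(\bi)$ (working block-by-block on a given weight space $e(\bi)H_d^\La$), and since $Q_r(\bi)$ has nonzero constant term in $F[[y_r,y_{r+1}]]$ by Lemma~\ref{LConstantTerm}, the formula (\ref{QQCoxKL}) can be solved for $T_r e(\bi)$ in the form $T_r e(\bi)=(\psi_r Q_r(\bi) - P_r(\bi))e(\bi)$. Since $\{e(\bi)\}$, $\{y_r\}$, $\{\psi_r\}$ thus generate the $X_r$, $T_r$, and since the $e(\bi)$ are polynomials in the $X_r$'s (being the spectral idempotents for the commuting family $X_1,\dots,X_d$), these three families generate $H_d^\La$. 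Afterward, the grading claim reduces to checking that each relation in the list is homogeneous when $\deg e(\bi)=0$, $\deg y_r=2$, $\deg \psi_r e(\bi)=-a_{i_r,i_{r+1}}$; this is an inspection of (\ref{ERCyc})--(\ref{R7}) using the symmetry $a_{i_r,i_{r+1}}=a_{i_{r+1},i_r}$ and, in (\ref{R7}), the Cartan relation $a_{i_r,i_{r+1}}+a_{i_{r+1},i_{r+2}}=-2$ in the three nontrivial cases.

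\textbf{Verifying the relations.} The cyclotomic relation (\ref{ERCyc}) translates directly from $\prod_m(X_1-\xi^{k_m})=0$ after passing to the $e(\bi)$-weight decomposition and rewriting $X_1=\xi^{i_1}(1-y_1)$. Relations (\ref{R1})--(\ref{R3Psi}) involving orthogonality of idempotents, commutation of $y$'s and $X$'s with faraway terms, and the far commutation of $\psi$'s, all follow immediately from the affine Hecke relations (\ref{QPoly})--(\ref{QCoxeter}) together with the corresponding properties of the spectral idempotents $e(\bi)$. The nontrivial work is in (\ref{R6})--(\ref{R7}): one substitutes $T_r e(\bi)=(\psi_r Q_r(\bi)-P_r(\bi))e(\bi)$ and $X_r e(\bi)=\xi^{i_r}(1-y_r)e(\bi)$ into (\ref{QDAHA})--(\ref{QCoxeter}) and checks equality in each of the cases distinguished by the mutual position of $i_r,i_{r+1},i_{r+2}$ in the quiver. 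The formulas in (\ref{QEP}), (\ref{QEQ}) were engineered precisely to make these identities hold; the computation is a case analysis, and the main obstacle is the mixed braid relation (\ref{R7}) in the $i_r\rightleftarrows i_{r+1}$ case, where the rational functions $Q_r(\bi)$ only clear after careful expansion and use of the quadratic relation (\ref{QECoxeterQuad}).

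\textbf{Completeness.} Finally, I would show that the listed relations suffice. Let $\widetilde H$ denote the algebra presented by the generators (\ref{EKLGens}) subject to (\ref{ERCyc})--(\ref{R7}). There is a surjection $\widetilde H \twoheadrightarrow H_d^\La$ by the previous two steps. To show it is an isomorphism, use the relations (\ref{R2PsiE})--(\ref{R5}) inductively to straighten any monomial in the generators into a normal form $\psi_w \, y_1^{b_1}\cdots y_d^{b_d} e(\bi)$, where for each $w\in\Sigma_d$ one fixes a reduced decomposition and lets $\psi_w$ denote the corresponding ordered product of $\psi_r$'s; the braid relation (\ref{R7}) shows this is well defined modulo lower-Bruhat corrections, and the cyclotomic relation (\ref{ERCyc}) together with (\ref{R6})--(\ref{R5}) forces the exponents $b_r$ to be bounded. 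A standard argument (parallel to the one used in \cite{BK}) then shows that the resulting spanning set of $\widetilde H$ has cardinality at most $d!\cdot l^d$, matching the Ariki--Koike basis of $H_d^\La$. Combined with the surjection, this yields the desired isomorphism and completes the proof.
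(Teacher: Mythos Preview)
The paper does not prove this theorem at all: it is quoted verbatim from \cite{BK} and is used here purely as a black box. So there is no ``paper's own proof'' to compare against; any proof you write is necessarily a reconstruction of what happens in \cite{BK}.

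That said, your outline is broadly the right shape for (i), (ii), and (iv), but the completeness step (iii) hides the real difficulty. You write that the cyclotomic relation (\ref{ERCyc}) together with (\ref{R6})--(\ref{R5}) ``forces the exponents $b_r$ to be bounded,'' leading to a spanning set of size at most $d!\cdot l^d$. This is exactly the hard part: relation (\ref{ERCyc}) only bounds the power of $y_1$ on each $e(\bi)$, and propagating nilpotence to $y_2,\dots,y_d$ inside the \emph{abstractly presented} algebra $\widetilde H$ is far from formal---indeed, establishing that the cyclotomic Khovanov--Lauda--Rouquier quotient has the correct dimension was one of the main contributions of \cite{BK}. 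The actual argument there does not proceed via a direct dimension bound on $\widetilde H$; rather, one constructs the inverse map explicitly, defining candidate images of $T_r$ and $X_r$ in $\widetilde H$ and checking that the affine Hecke relations (\ref{QPoly})--(\ref{QCoxeter}) together with the cyclotomic relation hold there. The two maps are then mutually inverse by construction, bypassing any independent dimension count. If you want your sketch to stand on its own, you should either carry out that inverse-map construction or give a genuine argument for the nilpotence of all $y_r$ in $\widetilde H$; ``a standard argument'' does not cover it.
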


\iffalse
\subsection{Dualizing}
There is a homogeneous anti-automorphism $\tau:H_\al^\La\to H_\al^\La$ which maps 
$$
\tau:\  e(\bi)\mapsto e(\bi),\ y_t\mapsto y_t,\ \psi_r\mapsto \psi_r \qquad(\bi\in I^\al,\ 1\leq r<d,\ 1\leq t\leq d).
$$
If $M=\oplus_{n\in\Z} M_n$ is a finite dimensional graded $H_d^\La$-module then its {\em graded dual module $M^\tau$} is defined as follows. As a vector space, $M^\tau=M^*$. 
The grading is defined to be $M^\tau=\bigoplus_{n\in\Z}M^\tau_n$, where $M^\tau_n$ consists of all linear functions on $M$ which are supported on $M_{-n}$. 
Finally, the action of $H_d^\La$ on $M^\tau$ is as usual via
$$
(hf)(m)=f(\tau(h)m)\qquad(f\in M^\tau,\ h\in H_d^\La,\ m\in M).
$$ 

\begin{Remark}%\label{}%{\rm \cite{}}%{\bf ()}
{\rm 
It can be shown (but will not be needed here) that, up to a non-homogeneous inner automorphism, $\tau$ agrees with the usual anti-automorphism 
$$*: \quad T_r\mapsto T_r,\quad X_t\mapsto X_t\qquad(1\leq r<d,\ 1\leq t\leq d)$$
on the Hecke algebra. So $\tau$ produces the same notion of duality on ungraded modules as is usually used for Hecke algebras. However, $\tau$ has the advantage of being homogeneous, and so can also be used to work with graded modules. 
}
\end{Remark}
\fi

%Let $H_\al^\aff$ be the algebra generated 
\subsection{\boldmath The elements $\psi_w$}\label{SPsiW}
From now on, 
for each element $w\in \Sigma_d$ we fix a reduced decomposition 
$w=s_{r_1}\dots s_{r_m}$, which we refer to as a {\em preferred reduced 
decomposition} and define the element
$$
\psi_w:=\psi_{r_1}\dots\psi_{r_m}\in H_d^\La\qquad(w\in \Sigma_d).
$$
In general, $\psi_w$ depends on the choice of a reduced decomposition for $w$. 

\begin{Lemma}\label{LYRight}%{\rm \cite{}}%{\bf ()}
Let $f(y)$ be a polynomial in $y_1,\dots,y_d$ regarded as an element of $H_d^\La$, $\bi\in  I^d$, and $1\leq r_1,\dots,r_m<d$. Then the element 
$$f(y)\psi_{r_1}\dots\psi_{r_m}e(\bi)\in H_d^\La$$ can be written as a linear combination of elements of the form
$$
\psi_{r_{a_1}}\dots\psi_{r_{a_b}}g(y)e(\bi)
$$ 
where $1\leq a_1<\dots<a_b\leq m$, $g(y)$ is a polynomial in $y_1,\dots,y_d$, and 
$$\deg(\psi_{r_{a_1}}\dots\psi_{r_{a_b}}g(y)e(\bi))=\deg(f(y)\psi_{r_1}\dots\psi_{r_m}e(\bi)).$$ 
\end{Lemma}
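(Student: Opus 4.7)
The plan is to iteratively push the polynomial $f(y)$ past the $\psi$'s from left to right, using relations (\ref{R3YPsi}), (\ref{R5}), and (\ref{R6}). Each time a single $y$ crosses a single $\psi_r$ it either simply swaps (possibly with an index change) or produces a correction term in which $\psi_r$ is deleted; crucially, the correction appears only when the relevant residues coincide, which forces the degrees to balance.

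\textbf{Step 1 (single $y$, single $\psi$).} Relation (\ref{R3YPsi}) handles $s\notin\{r,r{+}1\}$, (\ref{R5}) gives $y_{r+1}\psi_r e(\bj)=\psi_r y_r e(\bj)+\delta_{j_r,j_{r+1}}e(\bj)$, and rearranging (\ref{R6}) gives $y_r\psi_r e(\bj)=\psi_r y_{r+1}e(\bj)-\delta_{j_r,j_{r+1}}e(\bj)$. In all cases,
\[
y_s\psi_r e(\bj)\;=\;\psi_r y_{s_r(s)}e(\bj)\,+\,\epsilon\, e(\bj),
\]
for some $\epsilon\in\{0,\pm1\}$ which is nonzero only when $s\in\{r,r{+}1\}$ and $j_r=j_{r+1}$. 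The main term has degree $2-a_{j_r,j_{r+1}}$, and when $\epsilon\neq 0$ we have $a_{j_r,j_{r+1}}=2$, so the correction also has degree $0$; hence both sides are homogeneous of equal degree.

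\textbf{Step 2 (arbitrary polynomial, single $\psi$).} I would show by induction on the $y$-degree of $f$ that
\[
f(y)\psi_r e(\bj)\;=\;\psi_r\, h(y)\,e(\bj)\,+\,h'(y)\,e(\bj)
\]
for some polynomials $h,h'$, with every term of the same homogeneous degree as the left-hand side. For a monomial, factor off one $y_s$, apply the inductive hypothesis to the remaining factor, and then invoke Step~1 once; summing over monomials gives the general case.

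\textbf{Step 3 (main induction on $m$).} The case $m=0$ is immediate with $g:=f$. For $m\geq 1$, set $\bj:=s_{r_2}\cdots s_{r_m}\bi$; using (\ref{R2PsiE}) repeatedly, $\psi_{r_1}\cdots\psi_{r_m}e(\bi)=\psi_{r_1}e(\bj)\psi_{r_2}\cdots\psi_{r_m}e(\bi)$. Applying Step~2 to $f(y)\psi_{r_1}e(\bj)$ gives
\[
f(y)\psi_{r_1}\cdots\psi_{r_m}e(\bi)\;=\;\psi_{r_1}\bigl(h(y)\psi_{r_2}\cdots\psi_{r_m}e(\bi)\bigr)\,+\,h'(y)\psi_{r_2}\cdots\psi_{r_m}e(\bi).
\]
The inductive hypothesis rewrites each of the two bracketed expressions (each containing $m{-}1$ psi factors) as a linear combination of terms $\psi_{r_{a_2}}\cdots\psi_{r_{a_b}}g(y)e(\bi)$ with $2\leq a_2<\cdots<a_b\leq m$; prepending $\psi_{r_1}$ to those from the first collection yields the allowed form with $a_1=1$, while those from the second collection already have the allowed form.

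The main obstacle is simply the bookkeeping: one must track how the residue sequence relevant to each $\psi_{r_i}$ evolves as polynomials are commuted past it, and verify the degree identity at every step. However, because the correction terms occur only in the case $j_r=j_{r+1}$ (equivalently $a_{j_r,j_{r+1}}=2$), the degree-preservation becomes automatic, and no additional argument is needed beyond the three steps above.
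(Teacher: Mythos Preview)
Your proposal is correct and follows exactly the approach the paper sketches in one line (``Use (homogeneous) relations (\ref{R6}) and (\ref{R5}) to move $y_r$'s to the right''); you have simply unpacked that sentence into the natural induction with careful degree bookkeeping. Your inclusion of (\ref{R3YPsi}) for the commuting case $s\notin\{r,r{+}1\}$ and the explicit observation that correction terms appear only when $a_{j_r,j_{r+1}}=2$ are the details the paper leaves implicit.
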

\begin{proof}
Use (homogeneous) relations (\ref{R6}) and (\ref{R5}) to move $y_r$'s to the right. 
\end{proof}

We now investigate the dependence of $\psi_w$ on the choice of a reduced decomposition of $w$. 

\begin{Proposition}\label{PSubtle}%{\rm \cite{}}%{\bf ()}
Let $\bi\in I^d$, and $w$ be an element of $\Sigma_d$ written as a product of simple transpositions:  $w=s_{t_1}\dots s_{t_m}$ for some $1\leq t_1\dots,t_m<d$. \begin{enumerate}
\item[{\rm (i)}] If the decomposition $w=s_{t_1}\dots s_{t_m}$ is reduced, and $w=s_{r_1}\dots s_{r_m}$ is another reduced decomposition of $w$, then in $H_d^\La$ we have 
$$\psi_{t_1}\dots\psi_{t_m}e(\bi)=\psi_{r_1}\dots\psi_{r_m}e(\bi)+(*),$$
where $(*)$ is a linear combination of elements of the form $\psi_uf(y)e(\bi)$ such that $u< w$, $f(y)$ is a polynomial in $y_1,\dots,y_d$,  and $$\deg(\psi_uf(y)e(\bi))=\deg(\psi_{r_1}\dots\psi_{r_m}e(\bi))=\deg(\psi_{t_1}\dots\psi_{t_m}e(\bi)).$$

%\item[{\rm (i)}] If the decomposition $s_{t_1}\dots s_{t_m}$ is reduced and $w=s_{r_1}\dots s_{r_m}$ is another reduced decomposition, then in $H_\al^\La$ we have 
%$$\psi_{t_1}\dots\psi_{t_m}e(\bi)=\psi_{r_1}\dots\psi_{r_m}e(\bi)+(*),$$
%where $(*)$ is a linear combination of elements of the form 
%$$\psi_{t_{a_1}}\dots\psi_{t_{a_b}}f(y)e(\bi)$$ 
%such that $1\leq a_1<\dots<a_b\leq m$, $b<m$, $s_{t_{a_1}}\dots s_{t_{a_b}}$ is a reduced decomposition, $f(y)$ is a polynomial in $y_1,\dots,y_d$,  and 
%$$\deg(\psi_{t_{a_1}}\dots\psi_{t_{a_b}}f(y)e(\bi))=\deg(\psi_{t_1}\dots\psi_{t_m}e(\bi))=\deg(\psi_{r_1}\dots\psi_{r_m} e(\bi)).$$
%$\psi_uf(y)e(\bi)$ such that $u< w$, $f(y)$ is a polynomial in $y_1,\dots,y_d$,  and 
%$$\deg(\psi_uf(y)e(\bi))=\deg(\psi_w e(\bi))=\deg(\psi_{t_1}\dots\psi_{t_m}e(\bi)).$$

\item[{\rm (ii)}] If the decomposition $w=s_{t_1}\dots s_{t_m}$ is not reduced, then   $\psi_{t_1}\dots\psi_{t_m}e(\bi)$ can be written as a linear combination of elements of the form
$$\psi_{t_{a_1}}\dots\psi_{t_{a_b}}f(y)e(\bi)$$ 
such that $1\leq a_1<\dots<a_b\leq m$, $b<m$, $s_{t_{a_1}}\dots s_{t_{a_b}}$ is a reduced word, $f(y)$ is a polynomial in $y_1,\dots,y_d$,  and 
$$\deg(\psi_{t_{a_1}}\dots\psi_{t_{a_b}}f(y)e(\bi))=\deg(\psi_{t_1}\dots\psi_{t_m}e(\bi)).$$

%\item[{\rm (ii)}] Let $w=s_{r_1}\dots s_{r_m}$ be the preferred reduced decomposition of an element $w\in \Sigma_d$. For any $1\leq a_1<a_2<\dots <a_b\leq m$ with $b<m$, the element 
%$
%\psi_{r_{a_1}}\dots\psi_{r_{a_b}}e(\bi)\in H_\al^\La
%$
%can be written as a linear combination of elements of the form $\psi_uy_1^{a_1}\dots y_d^{a_d}e(\bi)$ such that $u< w$ and $\deg(\psi_uy_1^{a_1}\dots y_d^{a_d}e(\bi))=\deg(\psi_w e(\bi))$.
\end{enumerate}
\end{Proposition}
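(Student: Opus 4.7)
The plan is a joint induction on $m$; the cases $m\le 1$ are trivial, and since every relation in Theorem~\ref{TBK} is homogeneous, degree preservation is automatic in every rewriting below.

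For (i), I invoke Matsumoto's theorem to connect any two reduced decompositions of $w$ by a sequence of commutation moves $s_rs_s\leftrightarrow s_ss_r$ (for $|r-s|>1$) and braid moves $s_rs_{r+1}s_r\leftrightarrow s_{r+1}s_rs_{r+1}$. A commutation move introduces no correction, by the homogeneous relation (\ref{R3Psi}). A braid move applied at positions $j,j+1,j+2$ of the current reduced expression for $w$ contributes, via (\ref{R7}), a correction
\[
\psi_{r_1}\dots\psi_{r_{j-1}}\cdot C\cdot\psi_{r_{j+3}}\dots\psi_{r_m}e(\bi),
\]
where $C$ equals $0$, $\pm e(\bk)$, or a linear polynomial in $y_r,y_{r+1},y_{r+2}$ times $e(\bk)$ for an appropriate residue $\bk$. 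Using Lemma~\ref{LYRight} to migrate the $y$-factor to the right, each such correction becomes a linear combination of products $\psi_{c_1}\dots\psi_{c_{m'}}g(y)e(\bi)$ whose $\psi$-indices form a subword of $(r_1,\dots,r_{j-1},r_{j+3},\dots,r_m)$, hence a subword of $(r_1,\dots,r_m)$ of length $m'\le m-3$. The subword characterisation of the Bruhat order then gives $v:=s_{c_1}\dots s_{c_{m'}}\le w$, and $\ell(v)\le m-3<\ell(w)$ forces $v<w$ strictly. If $(c_1,\dots,c_{m'})$ is reduced I apply inductive (i) to rewrite $\psi_{c_1}\dots\psi_{c_{m'}}$ as $\psi_v$ modulo terms of still lower Bruhat order; if it is not reduced I first apply inductive (ii) to reduce to reduced subwords and then inductive (i) to each. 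Either way every correction assumes the form $\psi_{v'}f(y)e(\bi)$ with $v'<w$, as required.

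For (ii), let $k\in\{2,\dots,m\}$ be the smallest index for which $s_{t_1}\dots s_{t_k}$ is not reduced, and set $u=s_{t_1}\dots s_{t_{k-1}}$. The strong exchange condition supplies $p\in\{1,\dots,k-1\}$ with $u=s_{t_1}\dots\widehat{s_{t_p}}\dots s_{t_{k-1}}s_{t_k}$, a second reduced expression of $u$. Choose $\bj$ via (\ref{R2PsiE}) so that $\psi_{t_k}\dots\psi_{t_m}e(\bi)=e(\bj)\psi_{t_k}\dots\psi_{t_m}$; inductive (i) applied to $u$ (length $k-1<m$) gives
\[
\psi_{t_1}\dots\psi_{t_{k-1}}e(\bj)=\psi_{t_1}\dots\widehat{\psi_{t_p}}\dots\psi_{t_{k-1}}\psi_{t_k}e(\bj)+\sum_{v<u}\psi_v g_v(y)e(\bj).
\]
Right-multiplying by $\psi_{t_k}\dots\psi_{t_m}$, the first summand creates a $\psi_{t_k}^2$ which (\ref{R4}) replaces by a polynomial $g_0(y)$ of degree at most $1$; Lemma~\ref{LYRight} then pushes $g_0(y)$ past $\psi_{t_{k+1}}\dots\psi_{t_m}$, producing terms $\psi_{t_{a_1}}\dots\psi_{t_{a_b}}f(y)e(\bi)$ whose indices lie in $\{1,\dots,m\}\setminus\{p,k\}$, so that $b\le m-2<m$, as required.

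The main obstacle is the residual correction $\sum_{v<u}\psi_v g_v(y)\cdot\psi_{t_k}\dots\psi_{t_m}$, because each $\psi_v$ is formed from the preferred reduced decomposition of $v$, which need not be a subword of $(t_1,\dots,t_{k-1})$. I resolve this as follows. For each $v<u$, the subword property of Bruhat order furnishes a reduced expression $v=s_{t_{b_1}}\dots s_{t_{b_{\ell(v)}}}$ with $b_1<\dots<b_{\ell(v)}$ in $\{1,\dots,k-1\}$; inductive (i) applied in length $\ell(v)<m$ then yields
\[
\psi_v e(\bj)=\psi_{t_{b_1}}\dots\psi_{t_{b_{\ell(v)}}}e(\bj)+\sum_{v'<v}\psi_{v'}h(y)e(\bj).
\]
Iterating on the strictly Bruhat-smaller $\psi_{v'}$'s (the order is well-founded on $\Sigma_d$), every $\psi_v$ is eventually rewritten as a linear combination of products $\psi_{t_{b_1}}\dots\psi_{t_{b_c}}h(y)e(\bj)$ with $(b_1,\dots,b_c)$ a reduced subword of $(t_1,\dots,t_{k-1})$. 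Substituting back and invoking Lemma~\ref{LYRight} once more to migrate the $h(y)$'s past $\psi_{t_k}\dots\psi_{t_m}$, the whole contribution becomes a sum of $\psi_{t_{a_1}}\dots\psi_{t_{a_b}}f(y)e(\bi)$ with $(a_1,\dots,a_b)$ a subword of $(1,\dots,m)$ of total length at most $(k-2)+(m-k+1)=m-1<m$. Any such subword that is not reduced is handled by inductive (ii) applied in length $<m$, closing the induction.
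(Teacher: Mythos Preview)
Your argument is correct and follows essentially the same strategy as the paper's proof. For part (i) both proofs invoke Matsumoto's theorem, reduce to a single braid move, use (\ref{R7}) to produce a polynomial correction, and then Lemma~\ref{LYRight} together with the inductive hypotheses to cast the correction as $\psi_u f(y)e(\bi)$ with $u<w$. For part (ii) there is a minor organisational difference: the paper peels off the leftmost generator $s_{t_1}$ (splitting into the cases where $s_{t_2}\dots s_{t_m}$ is or is not reduced), whereas you locate the first non-reduced prefix and apply the exchange condition there. Both routes manufacture a $\psi_{t_k}^2$ to which (\ref{R4}) applies, and both then face the same bookkeeping issue of converting the correction terms $\psi_v g_v(y)$ (with $\psi_v$ written in a preferred reduced decomposition that need not be a subword of $(t_1,\dots,t_m)$) back into subword form via inductive (i) and the Bruhat subword characterisation. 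You spell this conversion out in detail; the paper handles the analogous step by the phrase ``treated similarly.''
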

\begin{proof}
We apply induction on $m$ to prove both claims. The base case $m=0$ is clear. Let $m>0$. 

(i) %Let $w=s_{r_1}\dots s_{r_m}$ be our preferred reduced decomposition. 
By Matsumoto's Theorem (see e.g. \cite[Theorem 1.8]{MathasB}), one can go from one reduced decomposition in a Coxeter group to another by applying a sequence of Coxeter relations. So we may assume that one can go from the reduced decomposition $s_{t_1}\dots s_{t_m}$ to the reduced decomposition $s_{r_1}\dots s_{r_m}$ by applying just one Coxeter relation. Moreover, in view of the relation (\ref{R3Psi}), we may assume that this is the Coxeter relation of the form $s_rs_ts_r=s_ts_rs_t$ for $|r-t|=1$.  Applying (\ref{R7}), we get for some $0\leq k\leq m-3$: 
$$(\psi_{t_1}\dots\psi_{t_m}-\psi_{r_1}\dots\psi_{r_m})e(\bi)=\psi_{t_1}\dots\psi_{t_k}f(y)\psi_{t_{k+4}}\dots\psi_{t_m}e(\bi),
$$
where $f(y)$ is a polynomial in $y$'s. From Lemma~\ref{LYRight}, we conclude that $(\psi_{t_1}\dots\psi_{t_m}-\psi_{r_1}\dots\psi_{r_m})e(\bi)$ can be written as a linear combination of elements of the form 
$
\psi_{t_{b_1}}\dots\psi_{t_{b_c}}f(y)e(\bi)
$
for some $1\leq b_1<\dots <b_c\leq m$ with $c< m$ (actually $c\leq m-3$). The result now follows from the inductive assumption and the fact that all relations in Theorem~\ref{TBK} are homogeneous.

(ii) Assume first that $s_{t_2}\dots s_{t_{m}}$ is not a reduced word. By the inductive assumption,  $\psi_{t_2}\dots\psi_{t_m}e(\bi)$
can be written as a linear combination of elements of the form 
$\psi_{t_{a_2}}\dots\psi_{t_{a_b}}f(y)e(\bi)$
such that $2\leq a_2<\dots<a_b\leq m$ %$s_{t_{a_2}}\dots s_{t_{a_b}}$ is a reduced decomposition, 
and $b<m$. 
%and $s_{t_{a_2}}\dots s_{t_{a_b}}$ is a reduced decomposition. If $s_{t_1}s_{t_{a_2}}\dots s_{t_{a_b}}$ is a reduced decomposition, then $\psi_{t_1}\psi_{t_{a_2}}\dots \psi_{t_{a_b}}f(y)e(\bi)$ has a desired form.  Otherwise, s
So
$
\psi_{t_1}\psi_{t_2}\dots\psi_{t_m}e(\bi)
$
is a linear combination of elements of the form 
$$\psi_{t_1}\psi_{t_{a_2}}\dots\psi_{t_{a_b}}f(y)e(\bi)$$
with the same condition on $a_k$'s and $b$. %such that $2\leq a_2<\dots<a_b\leq m$ and $m<b$. 
We can apply the inductive assumption to get a linear combination of elements of the desired form. 

Now assume that $s_{t_2}\dots s_{t_m}$ is a reduced word. 
%By the inductive assumption for part (i), we have 
%$$\psi_{t_2}\dots\psi_{t_m}e(\bi)=\psi_{s_{t_2}\dots s_{t_m}}e(\bi)+(*),$$ 
%where $(*)$ is a linear combination of elements of the form $\psi_{t_{a_2}}\dots\psi_{t_{a_b}}f(y)e(\bi)$ such that $2\leq a_2<\dots<a_b\leq m$, and $b<m$. 
%As in the previous paragraph, for any such element we apply the inductive assumption to conclude that 
%$\psi_{t_1}\psi_{t_{a_2}}\dots\psi_{t_{a_b}}f(y)e(\bi)$ can be represented in the desired form.  So it suffices to show the same for the element  
%$$\psi_{t_1}\psi_{s_{t_2}\dots s_{t_m}}e(\bi).$$ 
By assumption, we have  $\ell(s_{t_1}s_{t_2}\dots s_{t_m})<m$. So there exists a reduced decomposition of the element $s_{t_2}\dots s_{t_m}$ which starts with $t_1$: 
$$s_{t_2}\dots s_{t_m}=s_{t_1}s_{r_3}\dots s_{r_{m}}.
$$
By the inductive assumption for part (i), we can write 
$$\psi_{t_2}\dots \psi_{t_m}e(\bi)=\psi_{t_1}\psi_{r_3}\dots\psi_{r_m}e(\bi)+(*)$$ 
where
(*) is a linear combination of elements of the form $\psi_uf(y)e(\bi)$ such that $u< s_{t_2}\dots s_{t_m}$. 
Multiplying with $\psi_{t_1}$, we get 
\begin{equation}\label{EDivan}
\psi_{t_1}\psi_{t_2}\dots \psi_{t_m}e(\bi)=\psi_{t_1}^2\psi_{r_3}\dots \psi_{r_{m}}e(\bi)+(**), 
\end{equation}
where $(**)$ is a linear combination of elements of the form $\psi_{t_1}\psi_uf(y)e(\bi)$ such that $u< s_{t_2}\dots s_{t_m}$. 

In view of (\ref{R4}) and Lemma~\ref{LYRight}, the first summand in the right hand side of (\ref{EDivan}) is a linear combination of terms of the form 
$$
\psi_{r_{a_1}}\dots \psi_{r_{a_b}} f(y) e(\bi)
$$
for some $3\leq a_1<\dots< a_b\leq m$. By the inductive assumption for (ii), we may assume that $s_{r_{a_1}}\dots s_{r_{a_b}}$ is a reduced decomposition. Note that 
$$
s_{r_{a_1}}\dots s_{r_{a_b}} < s_{t_1}s_{r_3}\dots s_{r_m}=s_{t_2}\dots s_{t_m}.
$$ 
So in $\Sigma_d$, we can write $s_{r_{a_1}}\dots s_{r_{a_b}}  =s_{t_{c_1}}\dots s_{t_{c_b}}$, and since this is a reduced decomposition, by the inductive assumption for part (i), we know that 
$$
(\psi_{r_{a_1}}\dots \psi_{r_{a_b}} -\psi_{t_{c_1}}\dots \psi_{t_{c_b}})f(y)e(\bi)
$$
can be written as a linear combination of the desired terms. The terms from $(**)$ are treated similarly. Finally the statement on the degrees again follows from the fact that we have used only relations from Theorem~\ref{TBK}, all of which are homogeneous.
\end{proof}

\section{Combinatorics of tableaux}\label{STab}
In this section we fix notation concerning multipartitions and related combinatorial objects, more or less adopting the conventions of \cite{DJM, JM}.
Then we prove some combinatorial facts which we will need later. 
Recall from (\ref{EFixedWeight})
that we have fixed a positive integer $l$, and elements $k_1,\dots,k_l\in I$.

\subsection{Partitions and Young diagrams}\label{SSPar}
An {\em $l$-multipartition} of $d$
is an ordered $l$-tuple of partitions 
$\mu = (\mu^{(1)} , \dots,\mu^{(l)})$
such that $\sum_{m=1}^l |\mu^{(m)}|=d$. 
We call $\mu^{(m)}$ the {\em $m$th component} of $\mu$. 
If $m<n$ we say that $\mu^{(m)}$ is an earlier component than $\mu^{(n)}$ and $\mu^{(n)}$ is a later component than $\mu^{(m)}$. 
An $l$-multicomposition is defined similarly. The set of all $l$-multipartitions (resp. $l$-multicompositions) of $d$ is denoted $\Par_d$ (resp. $\Comp_d$). 

The {\em Young diagram} of the multipartition $\mu = (\mu^{(1)} , \dots,\mu^{(l)})\in \Par_d$ is 
$$
\{(a,b,m)\in\Z_{>0}\times\Z_{>0}\times \{1,\dots,l\}\mid 1\leq b\leq \mu_a^{(m)}\}.
$$
The elements of this set
are called the {\em nodes of $\mu$}. More generally, a {\em node} is an element of $\Z_{>0}\times\Z_{>0}\times \{1,\dots,l\}$. 
%We say that a node $A=(a_1,b_1,m_1)$ is {\em to the right} of the node $B=(a_2,b_2,m_2)$ if $m_1>m_2$, or $m_1=m_2$ and $b_1>b_2$. 
Usually, we identify the multipartition $\mu$ with its 
Young diagram and visualize it as a column vector of Young diagrams. 
For example, $((3,1),\emptyset,(4,2))$ is the Young diagram
$$
\begin{array}{l}
\diagram{&&\cr \cr}\\\emptyset\\\diagram{&&&\cr &\cr}
\end{array}
$$ 
For $\mu\in\Par_d$, we  label the nodes of $\mu$ with numbers $1,2,\dots,d$\, in order along the successive rows working from top to bottom, as shown in the following picture:
$$
\begin{array}{l}
\diagram{1&2&3\cr 4\cr}\\\emptyset\\\diagram{5&6&7&8\cr 9&10\cr}
\end{array}
$$ 
We say $A\in\mu$ is {\em earlier node} than $B\in\mu$ (or $B$ is a {\em later node} than $A$) if $A$ is labelled with $k$, $B$ is labelled with $n$ and $k<n$. 

A node $A\in\mu$ is called {\em removable (for $\mu$)}\, if $\mu\setminus \{A\}$ has a shape of a multipartition. A node $B\not\in\mu$ is called {\em addable (for $\mu$)}\, if $\mu\cup \{B\}$ has a shape of a multipartition. We use the notation
$$
\mu_A:=\mu\setminus \{A\},\qquad \mu^B:=\mu\cup\{B\}.
$$
For the example above, the removable nodes are $(1,3,1)$, $(2,1,1)$, $(1,4,3)$, $(2,2,3)$, and addable nodes are $(1,4,1)$, $(2,2,1)$,  $(3,1,1)$, $(1,1,2)$, $(1,5,3)$, $(2,3,3)$, $(3,1,3)$, in order from top to bottom in the diagram.

To each node $A=(a,b,m)$ %of the Young diagram $\mu$ 
we associate its {\em ($e$-)residue}: 
$$\res\, A:=k_m+b-a\in I.$$ 
We refer to the nodes of residue $i$ as the {\em $i$-nodes}.
Define the {\em residue content of $\mu$} to be
$$
\cont(\mu):=\sum_{A\in\mu}\al_{\res A} \in Q_+.
%\sum_{i\in I}m_i\al_i.
$$
%Thus, for multipartitions $\mu$ and $\nu$, we have that $\cont(\la) = \cont(\mu)$ if and only if the nodes in their Young diagrams have all the same residues, counted with multiplicities.
%where, for each $i\in I$, $m_i$ is the number of $i$-nodes in $\mu$. 

Let $\mu,\nu$ be multicompositions in $\Comp_d$. We say that $\mu$ {\em dominates} $\nu$, written $\mu\unrhd\nu$, if 
$$
\sum_{a=1}^{m-1}|\mu^{(a)}|+\sum_{b=1}^c\mu_b^{(m)}\geq 
\sum_{a=1}^{m-1}|\nu^{(a)}|+\sum_{b=1}^c\nu_b^{(m)}
$$
for all $1\leq m\leq l$ and $c\geq 1$.
In other words, $\mu$ is obtained from $\nu$ by moving nodes up in
the diagram.
If $\mu\in\Comp_d$ we denote by $\mu^+$ the $l$-multipartition obtained from $\mu$ by reordering  parts in each component.

\begin{Lemma}\label{LExercise}%{\rm \cite{}}%{\bf ()}
Let $\mu\in\Par_d$, $\nu\in\Comp_d$, $\mu\unrhd\nu$, and $\nu^{(m)}_a-\nu^{(m)}_{a+1}=-1$ for some $m$ and $a$. Let $\hat\nu\in\Comp_d$ be obtained from $\nu$ by switching the parts $\nu_a^{(m)}$ and $\nu_{a+1}^{(m)}$. Then $\mu\unrhd\hat\nu$. 
\end{Lemma}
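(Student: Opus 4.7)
The plan is to reduce the dominance inequality $\mu \unrhd \hat\nu$ to a single coordinate by comparing partial sums, and then prove strictness at that coordinate by contradiction using the partition property of $\mu$.

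First I would observe that the partial sum
$$
S_\xi(m',c) := \sum_{a'=1}^{m'-1}|\xi^{(a')}|+\sum_{b=1}^c\xi_b^{(m')}
$$
for $\xi \in \{\nu,\hat\nu\}$ is unchanged when switching from $\nu$ to $\hat\nu$, except in the single case $(m',c) = (m,a)$, where one checks directly from the definition of $\hat\nu$ that $S_{\hat\nu}(m,a) = S_\nu(m,a) + 1$ (the swap moves one box up from row $a+1$ to row $a$ inside the $m$th component, leaving all totals $|\hat\nu^{(m')}| = |\nu^{(m')}|$ intact). Therefore $\mu\unrhd\hat\nu$ follows from $\mu\unrhd\nu$ everywhere except at $(m,a)$, where we must strengthen
$$
S_\mu(m,a) \geq S_\nu(m,a)
$$
to the strict inequality $S_\mu(m,a) > S_\nu(m,a)$.

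Next I would prove this strict inequality by contradiction. Assume $S_\mu(m,a) = S_\nu(m,a)$. Combining this equality with the hypotheses $S_\mu(m,a-1)\geq S_\nu(m,a-1)$ and $S_\mu(m,a+1) \geq S_\nu(m,a+1)$ (interpreting $S_\cdot(m,0)$ as $\sum_{a'<m}|\cdot^{(a')}|$), one gets by subtraction
$$
\mu_a^{(m)} \leq \nu_a^{(m)} \qquad \text{and} \qquad \mu_{a+1}^{(m)} \geq \nu_{a+1}^{(m)}.
$$
Using the hypothesis $\nu_a^{(m)} = \nu_{a+1}^{(m)} - 1$, this chains to
$$
\mu_a^{(m)} \leq \nu_{a+1}^{(m)} - 1 < \nu_{a+1}^{(m)} \leq \mu_{a+1}^{(m)},
$$
contradicting the fact that $\mu^{(m)}$ is a partition, i.e.\ $\mu_a^{(m)} \geq \mu_{a+1}^{(m)}$.

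The main (and essentially only) obstacle is bookkeeping the partial sums correctly to localize the effect of the swap to the single coordinate $(m,a)$; once that is done, the partition property of $\mu$ immediately provides the required slack. No issue arises at the boundary case $a=1$ since the inequality $S_\mu(m,0)\geq S_\nu(m,0)$ is still available from $\mu\unrhd\nu$ (and is in fact an equality whenever it is reached).
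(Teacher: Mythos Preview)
Your proof is correct. The paper itself leaves this lemma as an exercise for the reader, so there is no approach to compare against; your argument---localizing the effect of the swap to the single partial sum at $(m,a)$ and then using the partition property of $\mu^{(m)}$ to force the needed strict inequality---is exactly the kind of direct verification the authors had in mind.
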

\begin{proof}
This is left as an exercise for the reader.
\end{proof}

\subsection{Tableaux}
Let $\mu=(\mu^{(1)},\dots,\mu^{(l)})$ be an $l$-multicomposition of $d$. 
A {\em $\mu$-tableau} 
$\T=(\T^{(1)},\dots,\T^{(l)})$ is obtained from the diagram of $\mu$ by 
inserting the integers $1,\dots,d$ into the nodes, allowing no repeats. 
The tableaux $\T^{(m)}$ are 
called the {\em components} of $\T$.
To each tableau $\T$ we associate its {\em residue sequence}
$$
\bi^\T=(i_1,\dots,i_d)\in I^d,
$$
where $i_r$ is the residue of the node occupied by 
$r$ in $\T$ ($1\leq r\leq d$). 

A $\mu$-tableau $\T$ is {\em row-strict} (resp. {\em column-strict}) if its entries increase from left to right (resp. from top to bottom) along the rows (resp. columns) of each component of $\T$. 
A $\mu$-tableau $\T$ is {\em standard} if it is row- and column-strict. 
The set of all standard $\mu$-tableaux will be denoted by $\St(\mu)$.  The group $\Sigma_d$ acts on the set of $\mu$-tableaux on the left by its action on
the entries.

Let $\T^\mu$ be the $\mu$-tableau in which the numbers $1,2,\dots,d$ appear in order along the successive rows, working from top to bottom.
%, and $\T^\mu$ be the $\mu$-tableau in which numbers $1,2,\dots,d$ appear in order along successive columns. Let $w_\mu\in \Sigma_d$ be the element such that $w_\mu  \T^\mu=\T^\mu$. 
Set $$\bi^\mu:=\bi^{\T^\mu}.$$ 
If $\T$ is a $\mu$-tableau, then $w_\T\in \Sigma_d$ is defined from
$$
w_\T  \T^\mu=\T,
$$
and the {\em length} of $\T$ is
$$
\ell(\T):=\ell(w_\T).
$$

\begin{Example}%\label{}%{\rm \cite{}}%{\bf ()}
{\rm 
Let $\mu=((3,1),\emptyset,(4,2))$, $e=3$, and $k_1=0,k_2=1,k_3=1$. The following are examples of standard $\mu$-tableaux:
$$
\T^\mu=
\begin{array}{l}
\diagram{1&2&3\cr 4\cr}\\\emptyset\\\diagram{5&6&7&8\cr 9&10\cr}
\end{array}
\qquad\qquad \T=\begin{array}{l}
\diagram{2&5&6\cr3 \cr}\\\emptyset\\\diagram{1&4&9&10\cr 7&8\cr}\end{array}
$$
The node $5$ of $\mu$ is the node $(1,1,3)$, $\bi^\mu=(0,1,2,2,1,2,0,1,0,1)$, and $w_\T=(1\:2\:5)(3\:6\:4)(7\: 9)(8\:10)$. 
}
\end{Example}

Let $\T$ be a $\mu$-tableau, and $1\leq r\neq s\leq d$. Assume that $r$ occupies the node $(a_1,b_1,m_1)$ and $s$ occupies the node $(a_2,b_2,m_2)$. 
We write 
$r\nearrow_\T s$
if $m_1=m_2$, $a_1>a_2$, and $b_1<b_2$; informally,
$r$ and $s$ are in the same component
and $s$ is strictly to the north-east of $r$ within that component.
%$r$ and $s$ are located in the same component $\T^{(m)}$ of $\T$  and $s$ is located in $\T^{(m)}$ strictly to the northeast of $r$, i.e. in greater column and smaller row. 
Symbols $\rightarrow_\T,\searrow_\T,\downarrow_\T$, etc. have similar obvious meaning. For example, $r\downarrow_\T s$ means that $r$ and $s$ are located in the same column of the same component but $r$ is in a strictly smaller row. 
The following easy observation will be repeatedly used without further comment: 

\begin{Lemma}%\label{}%{\rm \cite{}}%{\bf ()}
Let $\mu\in\Par_d$ and $\T\in\St(\mu)$. Then $s_r\T\in\St(\mu)$ if and only if 
$r\nearrow_\T r+1$, or $r+1\nearrow_\T r$, or $r$ and $r+1$ are located in different components of $\T$. 
\end{Lemma}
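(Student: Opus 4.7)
The plan is to argue by a straightforward case analysis on the relative position of $r$ and $r+1$ in $\T$. If $r$ and $r+1$ lie in distinct components of $\T$, then swapping them does not affect any individual component, so $s_r\T$ is automatically row- and column-strict, hence standard. So I may assume $r$ and $r+1$ lie in a common component $m$, at nodes $(a_1,b_1,m)$ and $(a_2,b_2,m)$ respectively, and split into subcases by comparing rows and columns.

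The ``same row'' case ($a_1=a_2$) and ``same column'' case ($b_1=b_2$) are immediate: row-strictness of $\T$ forces $b_1<b_2$ in the first case (resp.\ column-strictness forces $a_1<a_2$ in the second), so after interchanging $r$ and $r+1$ the corresponding strictness fails. Thus $s_r\T\notin\St(\mu)$ in these two cases, consistent with the statement of the lemma.

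The key observation is that the ``NW/SE diagonal'' configuration --- namely $a_1<a_2$ together with $b_1<b_2$, or $a_1>a_2$ together with $b_1>b_2$ --- is impossible in a standard tableau. Indeed, in the first subcase the node $(a_2,b_1,m)$ is forced to belong to the Young diagram, since $(a_2,b_2,m)$ does and $b_1<b_2$ means row $a_2$ of the $m$th component has length at least $b_2$. Let $y$ denote its entry. Column-strictness in column $b_1$ (using the node of $r$ above at row $a_1<a_2$) gives $y>r$, while row-strictness in row $a_2$ (using $r+1$ to the right at column $b_2>b_1$) gives $y<r+1$, a contradiction. The second subcase is symmetric, using the node $(a_1,b_2,m)$.

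The remaining ``NE/SW diagonal'' cases are precisely $r\nearrow_\T r+1$ and $r+1\nearrow_\T r$, and in either case I claim $s_r\T$ is again standard. The verification is a finite check: only the entries $r$ and $r+1$ move, and since all entries of $\T$ are distinct integers in $\{1,\dots,d\}$, any other entry in the two affected rows or two affected columns that was previously $>r$ (resp.\ ${<}\,r+1$) is automatically ${\geq}\,r+2$ (resp.\ ${\leq}\,r-1$), which yields the strict inequalities required for row- and column-strictness after the swap. The only real content is the impossibility of the NW/SE configuration above; the rest is bookkeeping.
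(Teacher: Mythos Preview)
Your case analysis is correct and complete: the impossibility of the ``NW/SE diagonal'' configuration via the corner argument, together with the routine checks in the remaining cases, establishes both directions of the equivalence. The paper does not actually supply a proof of this lemma---it is stated as an ``easy observation'' and left without further comment---so your argument is precisely the kind of verification the authors are implicitly leaving to the reader.
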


\subsection{Bruhat order}
Let $\mu\in\Comp_d$. Recalling the Bruhat order $\leq$ on $\Sigma_d$,
the {\em Bruhat order on $\mu$-tableaux} is defined as follows: 
$$
\Stab\unlhd\T\quad\text{if and only if}\quad w_\Stab\leq w_\T.
$$

\begin{Lemma}\label{LChain}%{\rm \cite{}}%{\bf ()}
Let $\Stab,\T$ be $\mu$-tableaux. Then $\Stab\unlhd\T$ if and only if $\Stab$ can be obtained from $\T$ by applying a sequence of transpositions
$$\Stab=(a_1\:b_1)\dots(a_m\:b_m)\T$$
such that for each $1\leq n\leq m$ we have  $a_n<b_n$ and $b_n$ occupies an earlier node in $(a_{n+1}\:b_{n+1})\dots(a_m\:b_m)\T$ than $a_n$. 
\end{Lemma}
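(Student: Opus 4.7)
The plan is to reduce the statement to the standard ``chain characterization'' of the Bruhat order on $\Sigma_d$, via the dictionary between $\mu$-tableaux and permutations.

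First I would record that the bijection $\T \mapsto w_\T$ is $\Sigma_d$-equivariant with respect to the natural left actions: for $\sigma \in \Sigma_d$ and any $\mu$-tableau $\T$, the relation $w_{\sigma\T}\T^\mu = \sigma\T = \sigma w_\T \T^\mu$ yields $w_{\sigma\T} = \sigma w_\T$. Consequently, for $1\le r\le d$, the value $w_\T^{-1}(r)$ is precisely the label of the node occupied by the entry $r$ in $\T$ (in the reading-order labelling of §3.1). In particular, for $a < b$, the condition ``$b$ occupies an earlier node than $a$ in $\T$'' is exactly $w_\T^{-1}(a) > w_\T^{-1}(b)$.

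Next I would invoke the following well-known fact from Coxeter theory (cf.\ \cite{Hu}): for a transposition $t = (a\:b)\in \Sigma_d$ with $a < b$ and any $w\in\Sigma_d$, we have $\ell(tw) < \ell(w)$ if and only if $w^{-1}(a) > w^{-1}(b)$. Combined with the preceding paragraph, for $w = w_\T$ this descent condition translates into ``$b$ is in an earlier node than $a$ in $\T$''. Finally I would apply the chain (``subword'') characterization of the Bruhat order, which asserts that $u \le w$ in $\Sigma_d$ iff there exist transpositions $t_1,\ldots,t_m$ with $u = t_1 t_2 \cdots t_m w$ and
$$
\ell(t_n t_{n+1}\cdots t_m w) < \ell(t_{n+1}\cdots t_m w) \qquad (1\le n\le m).
$$
Writing $t_n = (a_n\:b_n)$ with $a_n < b_n$, this identity becomes $\Stab = (a_1\:b_1)\cdots(a_m\:b_m)\T$ on the tableau side, and the $n$th descent condition, applied to the permutation $t_{n+1}\cdots t_m w_\T$ (which corresponds to the tableau $(a_{n+1}\:b_{n+1})\cdots(a_m\:b_m)\T$), becomes the statement that $b_n$ occupies an earlier node than $a_n$ in that intermediate tableau. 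This is exactly the condition claimed, and conversely any such sequence of transpositions produces a descending chain in the Bruhat order from $w_\T$ to $w_\Stab$.

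The main obstacle, which is truly not much of one, is the bookkeeping needed to align the order in which transpositions are composed on the tableau side with the left-multiplication chain on the permutation side — one must verify that the partial product $(a_{n+1}\:b_{n+1})\cdots(a_m\:b_m)\T$ really is the tableau associated to the permutation $t_{n+1}\cdots t_m w_\T$ on which $t_n$ acts by a Bruhat descent. Beyond this dictionary check, the argument is immediate from the cited Coxeter-theoretic facts.
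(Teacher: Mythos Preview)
Your argument is correct and is exactly the approach the paper takes: the paper's entire proof is the single sentence ``This is equivalent to the definition given in \cite[\S5.9]{Hu},'' i.e.\ it simply invokes the chain characterization of Bruhat order from Humphreys. You have merely spelled out the tableau-to-permutation dictionary (the equivariance $w_{\sigma\T}=\sigma w_\T$, the identification of $w_\T^{-1}(r)$ with the node label of $r$, and the translation of ``earlier node'' into the left-descent condition $\ell((a\,b)w)<\ell(w)$) that makes this citation transparent.
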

\begin{proof}
This is equivalent to the definition given in \cite[\S5.9]{Hu}.
\end{proof}

If $\T$ is a row-strict 
$\mu$-tableau and $1\leq a\leq d$, define $\T_{\leq a}$ to be the 
tableau obtained by erasing all nodes occupied with entries greater 
than $a$. This tableau has the
shape of a multicomposition, denoted 
$\sh(\T_{\leq a})$. 
%Similarly, we can consider $T_{\geq a}$ whose shape is in 
%general the shape of a {\em skew} multicomposition. 
The following result is well known, see e.g. \cite[Theorem 3.8]{MathasB}.

\begin{Lemma}\label{LMathas}%
%{\rm \cite[Theorem 3.8]{MathasB}} %{\bf ()}
Let $\mu\in\Comp_d$ and $\Stab,\T$ be row-strict $\mu$-tableaux. Then $\Stab\unlhd\T$ if and only if $\sh(\Stab_{\leq a})\unrhd\sh(\T_{\leq a})$ for each $a=1,2,\dots,d$. 
\end{Lemma}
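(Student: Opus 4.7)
The plan is to prove the two implications separately; the forward direction follows directly from Lemma~\ref{LChain}, while the converse requires a more delicate inductive argument in which Lemma~\ref{LExercise} plays a key role.

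For the forward direction, suppose $\Stab\unlhd\T$. By Lemma~\ref{LChain} we may write $\Stab=(a_1\:b_1)\cdots(a_m\:b_m)\T$, where each transposition $(a_n\:b_n)$ satisfies $a_n<b_n$ and, in the intermediate tableau just prior to the $n$th step, $b_n$ occupies an earlier node than $a_n$. By induction on $m$, it suffices to verify that a single such transposition $(a\:b)$ sending $\T^\circ$ to $\T^\star$ satisfies $\sh(\T^\star_{\leq k})\unrhd\sh(\T^\circ_{\leq k})$ for every $k$. For $k<a$ or $k\geq b$ the subdiagram is unchanged, while for $a\leq k<b$ the passage from $\T^\circ$ to $\T^\star$ deletes the later node (formerly occupied by $a$) from the $\leq k$-part and inserts the earlier node (formerly occupied by $b$); a direct inspection of the cumulative row-sums defining dominance shows that this weakly increases the shape in the dominance order.

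For the converse, I induct on $d$, the case $d=0$ being trivial. Let $A$ and $B$ be the nodes occupied by $d$ in $\Stab$ and $\T$ respectively. The hypothesis applied at $a=d-1$ reads $\mu\setminus\{A\}\unrhd\mu\setminus\{B\}$, which by a direct inspection of cumulative row-sums forces $A$ to lie weakly later than $B$ in the reading order. If $A=B$, strip the entry $d$ from both tableaux to obtain row-strict tableaux on the shape $\mu\setminus\{A\}\in\Comp_{d-1}$, verify that the shape-dominance hypothesis persists at every level $a\leq d-1$, apply the inductive hypothesis, and lift the resulting Bruhat relation back to $\Sigma_d$ via the standard embedding $\Sigma_{d-1}\hookrightarrow\Sigma_d$. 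If $A\neq B$, so that $A$ lies strictly later than $B$, the entry $c$ sitting at $A$ in $\T$ automatically satisfies $c<d$; the goal is then to construct a row-strict $\T'$ with $\T'\lhd\T$ still satisfying the shape-dominance hypothesis relative to $\Stab$, by moving $d$ in $\T$ one reading-order step later via a suitable swap of adjacent rows. Iterating this procedure eventually reduces to the $A=B$ subcase.

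The main obstacle is precisely this last subcase of the converse: each elementary swap must simultaneously (i) lower Bruhat length, (ii) preserve row-strictness, and (iii) retain the shape-dominance inequality against $\Stab$ at every level. Since $d$ is the maximum entry it must occupy the rightmost position of its row in $\T$, so the admissible transpositions are restricted to moves between consecutive rows (possibly across components), exactly the setting in which Lemma~\ref{LExercise} guarantees that after swapping two neighbouring rows whose lengths differ by $1$, the shape of the modified tableau is still dominated by the corresponding shape of $\Stab$. Once this framework is in place the remaining bookkeeping is routine and parallels the proof of \cite[Theorem~3.8]{MathasB}.
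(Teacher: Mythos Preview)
The paper does not actually prove Lemma~\ref{LMathas}: it simply records it as well known and cites \cite[Theorem~3.8]{MathasB}. So your proposal goes well beyond what the paper does. Your forward direction via Lemma~\ref{LChain} is correct (with the small caveat that the intermediate tableaux in the transposition chain need not be row-strict, so you should phrase the argument in terms of row-counts rather than $\sh(\cdot)$).

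Your converse, however, has a genuine gap. The key step is the construction of a row-strict $\T'\lhd\T$ in which $d$ has moved to a strictly later node, but your recipe ``move $d$ one reading-order step later via a suitable swap of adjacent rows'' does not always yield a row-strict tableau. For instance, take $l=1$, $\mu=(3,1)$, $\Stab=\T^\mu$, and $\T$ with first row $1,3,4$ and second row $2$. The hypothesis $\sh(\Stab_{\leq a})\unrhd\sh(\T_{\leq a})$ holds for all $a$, and $d=4$ sits at the end of row~$1$ in $\T$ while in $\Stab$ it sits in row~$2$. The only transposition moving $4$ into row~$2$ swaps it with $2$, producing first row $1,3,2$, which is not row-strict. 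So no single transposition realises your intermediate $\T'$, and the induction stalls. Your appeal to Lemma~\ref{LExercise} is also misplaced: that lemma requires the dominating shape to be a multipartition, whereas $\sh(\Stab_{\leq a})$ is in general only a multicomposition (and the lemma is false without that hypothesis, e.g.\ $(1,2)\unrhd(1,2)$ but $(1,2)\not\unrhd(2,1)$).

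Since you ultimately defer to \cite[Theorem~3.8]{MathasB} anyway, the cleanest fix is to do exactly what the paper does and cite that reference outright; if you want to give a self-contained argument for the converse, you will need a different inductive step than the one sketched here.
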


If $\T$ is a $\mu$-tableau, denote by $\T^+$ the row-strict tableau which is row equivalent to $\T$. 

\begin{Lemma}\label{LC}%{\rm \cite{}}%{\bf ()}
Let $\mu\in\Comp_d$ and $\Stab,\T$ be $\mu$-tableaux. If $\Stab\unlhd \T$ then $\Stab^+\unlhd\T^+$.
\end{Lemma}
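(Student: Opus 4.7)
The plan is to combine Lemma~\ref{LChain} with the dominance criterion from Lemma~\ref{LMathas}. First I would use Lemma~\ref{LChain} to reduce, by induction on the length of the chain of transpositions expressing $\Stab$ from $\T$, to the one-step case: $\Stab=(a\:b)\T$ where $a<b$ and, in $\T$, the entry $b$ occupies a node $P$ that precedes (in the labelling of the nodes of $\mu$ by $1,\dots,d$ along rows from Section~\ref{SSPar}) the node $Q$ occupied by the entry $a$. The inductive step is just transitivity of $\unlhd$, so all the content sits in this one-step case.

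Next, applying Lemma~\ref{LMathas} to the row-strict tableaux $\Stab^+$ and $\T^+$, the goal $\Stab^+\unlhd\T^+$ becomes the family of dominance inequalities $\sh(\Stab^+_{\leq k})\unrhd\sh(\T^+_{\leq k})$ for $k=1,\dots,d$. Since the shape of $\U^+_{\leq k}$ only sees how many entries $\leq k$ sit in each row of each component of $\U$, these dominance inequalities are equivalent to the family of inequalities
$$
\#\{j\leq k: j\text{ occupies a node of }R_{m,c}\text{ in }\Stab\}\;\geq\;\#\{j\leq k: j\text{ occupies a node of }R_{m,c}\text{ in }\T\},
$$
where $R_{m,c}$ ranges over the initial segments of the node labelling, namely all nodes in components $<m$ together with all nodes in rows $\leq c$ of component $m$.

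The one-step verification is then immediate. The tableaux $\Stab$ and $\T$ agree outside the nodes $P$ and $Q$, so both sides are equal whenever $k<a$ (neither $a$ nor $b$ contributes) or $k\geq b$ (both contribute, and swapping $a$ and $b$ permutes the pair $\{P,Q\}$ but leaves the underlying set of occupied nodes unchanged). For $a\leq k<b$ only the entry $a$ contributes among $\{a,b\}$, from the node $P$ on the left and from the node $Q$ on the right, so the difference of the two sides equals $[P\in R_{m,c}]-[Q\in R_{m,c}]$. Since $R_{m,c}$ is an initial segment of the node labelling and $P$ precedes $Q$, this is nonnegative. The only real step (rather than an obstacle) is recognising that the $R_{m,c}$ are precisely the initial segments of the labelling; once that is in hand, the inequality is forced by the ``earlier node'' hypothesis coming from Lemma~\ref{LChain}.
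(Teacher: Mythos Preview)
Your proof is correct and follows essentially the same route as the paper's: reduce via Lemma~\ref{LChain} to a single transposition $\Stab=(a\:b)\T$, then verify the dominance criterion of Lemma~\ref{LMathas} for $\Stab^+$ and $\T^+$. The paper's proof is terser---it separates off the trivial case where $a$ and $b$ lie in the same row (so $\Stab^+=\T^+$) and otherwise simply invokes Lemma~\ref{LMathas}---whereas you spell out the verification of the dominance inequalities uniformly via the initial-segment description of the $R_{m,c}$; but the underlying argument is the same.
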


\begin{proof}
In view of Lemma~\ref{LChain}, we may assume that $\Stab=(a\:b)\T$ for $a<b$ and $b$ occupies an earlier node in $\T$ than $a$. Let $a$ (resp. $b$) occupy the node $(r_a,c_a,m_a)$ (resp. $(r_b,c_b,m_b)$) in $\T$. Then $m_b\leq m_a$, and $r_b\leq r_a$ when $m_b=m_a$. If $m_b=m_a$ and $r_b=r_a$, then $\Stab^+=\T^+$. Otherwise  we can apply Lemma~\ref{LMathas} to the row-strict tableaux $\Stab^+$ and $\T^+$. 
\end{proof}

\begin{Lemma}\label{L211108}%{\rm \cite{}}%{\bf ()}
Let $\mu\in\Par_d$, $\T\in\St(\mu)$, and  $1\leq r<d$ such that $r\downarrow_\T r+1$ or $r\rightarrow_\T r+1$. If $\Stab\in\St(\mu)$ and $\Stab\lhd s_r T$ then $\Stab\unlhd \T$. 
\end{Lemma}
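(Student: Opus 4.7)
The plan is to combine the subword (lifting) property of the Bruhat order with the dominance criterion of Lemma~\ref{LMathas}. The hypothesis $r\downarrow_\T r+1$ or $r\rightarrow_\T r+1$ forces $r$ to sit at an earlier node than $r+1$ in $\T$, so $s_r w_\T > w_\T$; hence $s_r\cdot(\text{a reduced expression for }w_\T)$ is a reduced expression for $s_r w_\T$, and the subword property applied to $w_\Stab < s_r w_\T$ splits into two cases: either a reduced subexpression for $w_\Stab$ can be chosen inside the reduced expression for $w_\T$ (whence $\Stab\unlhd\T$ and we are done), or $w_\Stab=s_r v$ with $v\leq w_\T$ and $\ell(s_r v)=\ell(v)+1$. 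In the latter case set $\Stab':=s_r\Stab$, so that $\Stab'\unlhd\T$; moreover $\Stab'$ is standard, because in $\Stab$ the entries $r$ and $r+1$ (with $r+1$ earlier than $r$) must occupy distinct rows and distinct columns, and interchanging them therefore preserves row- and column-strictness.

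By Lemma~\ref{LMathas} it suffices to show $\sh(\Stab_{\leq k})\unrhd\sh(\T_{\leq k})$ for every $k$. For $k\neq r$ the shapes $\sh(\Stab_{\leq k})$ and $\sh(\Stab'_{\leq k})$ coincide, so the desired dominance follows from $\Stab'\unlhd\T$. The whole content of the lemma thus reduces to the single case $k=r$.

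Suppose for contradiction that $\sh(\Stab_{\leq r})\not\unrhd\sh(\T_{\leq r})$ at some $(m,c)$. Let $A,B$ and $C,D$ be the nodes occupied by $r,r+1$ in $\Stab$ and $\T$ respectively, and for any node $X$ set $\chi_X(m,c):=1$ if $X$ contributes to the $(m,c)$ partial sum (that is, $X$ lies either in component $<m$, or in component $m$ and row $\leq c$), and $0$ otherwise. Comparing the presumed failure at $(m,c)$ with the three dominances $\sh(\Stab_{\leq r-1})\unrhd\sh(\T_{\leq r-1})$, $\sh(\Stab'_{\leq r})\unrhd\sh(\T_{\leq r})$ and $\sh(\Stab_{\leq r+1})\unrhd\sh(\T_{\leq r+1})$ (all inherited from $\Stab'\unlhd\T$) pins down $\chi_C(m,c)=\chi_B(m,c)=1$ and $\chi_A(m,c)=\chi_D(m,c)=0$. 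If $r\rightarrow_\T r+1$ then $C$ and $D$ lie in the same row of the same component, so $\chi_C(m,c)=\chi_D(m,c)$, contradicting $\chi_C=1,\chi_D=0$ at once.

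The main obstacle is the case $r\downarrow_\T r+1$, where $C=(a,b,m_\T)$ and $D=(a+1,b,m_\T)$ and the indicator conditions force $(m,c)=(m_\T,a)$. The structure of $\T$ near $C,D$ gives $\sh(\T_{\leq r-1})^{(m_\T)}_a=\sh(\T_{\leq r-1})^{(m_\T)}_{a+1}=b-1$, and then the equality $\sigma_{\sh(\Stab_{\leq r-1})}(m_\T,a)=\sigma_{\sh(\T_{\leq r-1})}(m_\T,a)$ (forced by $\chi_A=0$, $\chi_C=1$) combined with $\sigma_{\sh(\Stab_{\leq r-1})}(m_\T,a+1)\geq\sigma_{\sh(\T_{\leq r-1})}(m_\T,a+1)$ yields $\sh(\Stab_{\leq r-1})^{(m_\T)}_a=\sh(\Stab_{\leq r-1})^{(m_\T)}_{a+1}=b-1$. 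Since $A$ is addable to $\sh(\Stab_{\leq r-1})$ with $\chi_A(m_\T,a)=0$, these row-length equalities exclude $A$ from row $a+1$ of component $m_\T$, so $\chi_A(m_\T,a+1)=0$. Evaluating the partial sum at $(m_\T,a+1)$ then reveals that $\sh(\Stab'_{\leq r+1})=\sh(\Stab_{\leq r-1})\cup\{A,B\}$ falls exactly one short of $\sh(\T_{\leq r+1})=\sh(\T_{\leq r-1})\cup\{C,D\}$, contradicting $\Stab'\unlhd\T$ and completing the plan.
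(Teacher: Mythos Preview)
Your argument is correct. One small point worth making explicit: to deduce $\sh(\Stab_{\leq r-1})^{(m_\T)}_a=b-1$ (and hence $=b-1$ for row $a+1$ as well) you are implicitly also using the dominance inequality at level $(m_\T,a-1)$ together with the fact that $\sh(\Stab_{\leq r-1})$ is a genuine multipartition; this gives the upper bound $\leq b-1$ on row $a$, after which the partition condition and your lower bound on row $a+1$ force both rows to equal $b-1$.

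Your route is genuinely different from the paper's. The paper never passes to $\Stab'=s_r\Stab$ or invokes the lifting property of Bruhat order; instead it compares $\Stab$ directly with $s_r\T$. In the case $r\rightarrow_\T r+1$ it observes that $\T=(s_r\T)^+$ and applies Lemma~\ref{LC}. In the case $r\downarrow_\T r+1$ it notes that $s_r\T$ is row-strict, so Lemma~\ref{LMathas} gives $\sh(\Stab_{\leq a})\unrhd\sh((s_r\T)_{\leq a})$ for every $a$; since these shapes agree with $\sh(\T_{\leq a})$ except at $a=r$, the only remaining inequality $\sh(\Stab_{\leq r})\unrhd\sh(\T_{\leq r})$ is exactly an instance of Lemma~\ref{LExercise}. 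The paper's proof is therefore shorter and modular, delegating the combinatorics to Lemmas~\ref{LC} and~\ref{LExercise}, whereas your approach is self-contained: the lifting property plus the direct partial-sum analysis replace both auxiliary lemmas at once, at the cost of a longer argument.
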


\begin{proof}
If $r\rightarrow_\T r+1$, then $\T=(s_r\T)^+$, and the result follows from Lemma~\ref{LC}. Let $r\downarrow_\T r+1$. Then $s_r\T$ is row-strict. By Lemma~\ref{LMathas}, $\sh(\Stab_{\leq a})\unrhd\sh((s_r \T)_{\leq a})$ for any $a=1,\dots,d$. But $\sh((s_r \T)_{\leq a})=\sh(\T_{\leq a})$ for all $a$, except $a=r$. In order to apply Lemma~\ref{LMathas} to $\Stab$ and $\T$, it remains to show that $\sh(\Stab_{\leq r})$ dominates $\sh(\T_{\leq r})$, which follows from Lemma~\ref{LExercise}.  
\end{proof}

\subsection{Weak Bruhat order}\label{SSWBO}
Let $\mu\in\Par_d$. We endow the set $\St(\mu)$ with a structure of colored directed graph as follows: for $\T,\Stab\in\St(\mu)$ we set $\T\stackrel{r}{\longrightarrow} \Stab$ if and only if $\Stab=s_r  \T$ and $r$ occupies an earlier node in $\T$ than $r+1$. Since both $\Stab$ and $\T$ are standard, the last condition is equivalent to the 
statement that either  $r+1\nearrow_\T r$ or $r$ is in a earlier component of $\T$ than $r+1$. 
The resulting (connected) graph is called the {\em weak Bruhat graph}. A 
tableau $\T\in \St(\mu)$ is  {\em $r$-terminal} %(resp. {\em $r$-initial}) 
if there is an edge of the form $\Stab\stackrel{r}{\longrightarrow} \T$ 
%(resp. $\T\stackrel{r}{\longrightarrow} \Stab$) 
in this graph. More generally, $\T\in \St(\mu)$ is {\em $(r_1,\dots,r_m)$-terminal} %(resp. {\em $(r_1,\dots,r_m)$-initial}) 
if
$$\Stab_1\stackrel{r_1}{\longrightarrow}\Stab_2\stackrel{r_2}{\longrightarrow}\dots\stackrel{r_m}{\longrightarrow} \T.
%\qquad \text{(resp.}\ \T\stackrel{r_1}{\longrightarrow}\Stab_1\stackrel{r_2}{\longrightarrow}\dots\stackrel{r_m}{\longrightarrow}\Stab_m).
$$

\begin{Proposition}\label{PAbove}%{\rm \cite{}}%{\bf ()}
Let $\mu\in\Par_d$ and $\T\in\St(\mu)$ such that $r\rightarrow_\T r+1$. Then at least one of the following holds:
\begin{enumerate}
\item[{\rm (1)}] there is $t$ with $|t-r|>1$ such that $\T$ is $t$-terminal, and $r\rightarrow_{s_t  \T}r+1$;
\item[{\rm (2)}] $\T$ is $(r,r+1)$-terminal, and $r+1\rightarrow_{s_rs_{r+1}  \T}r+2$;
\item[{\rm (3)}] $\T$ is $(r,r-1)$-terminal, and $r-1\rightarrow_{s_rs_{r-1}  \T}r$;
\item[{\rm (4)}] $\T=\T^\mu$.
\end{enumerate} 
\end{Proposition}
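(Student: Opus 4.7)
The plan is a contradiction argument: I will assume (1)--(4) all fail and derive a contradiction.

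The key preliminary observation is that for any $\T \in \St(\mu)$ and any $1 \le t \le d-1$, $\T$ is $t$-terminal if and only if $t+1$ occupies an earlier node than $t$ in $\T$. The ``only if'' direction is immediate from the definition of the edge $s_t\T \stackrel{t}{\longrightarrow}\T$; for ``if,'' row- and column-strictness of $\T$ rule out $t$ and $t+1$ sharing a row or column of a single component, so either they lie in different components or $t \nearrow_\T t+1$, and in each case $s_t\T$ remains standard. In particular, $\T = \T^\mu$ iff $t$ is earlier than $t+1$ in $\T$ for every $t$.

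Now suppose (1)--(4) all fail. Since (4) fails, some $t$ has $t+1$ earlier than $t$ in $\T$; since (1) fails, every such $t$ lies in $\{r-1, r+1\}$ (the hypothesis $r\rightarrow_\T r+1$ excludes $t=r$). So we are in Case A ($r+2$ earlier than $r+1$) or Case B ($r$ earlier than $r-1$). Writing $r$ at $(a,b,m)$ and $r+1$ at $(a,b+1,m)$, row- and column-strictness against $r$, $r+1$ and the entries immediately to the left of $r$ in row $a$ narrow down the possibilities: in Case A, $r+2$ lies either (A1) in component $m$ with $a''<a$ and $b''>b+1$, or (A2) in a strictly earlier component; in Case B, $r-1$ lies either (B1) in component $m$ with $a'>a$ and $b'<b$, (B2) in component $m$ with $a'>a$ and $b'\ge b+2$, or (B3) in a strictly later component.

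In configurations (A1), (A2), (B1), (B3), the verification is direct: I would write out the intermediate tableaux $s_{r+1}\T$ and $s_r s_{r+1}\T$ (resp.\ $s_{r-1}\T$ and $s_r s_{r-1}\T$), check standardness from the allowed relative positions, check the ``earlier'' condition for each edge of the weak Bruhat graph, and observe that the end tableau places $r+1, r+2$ at $(a,b,m), (a,b+1,m)$ respectively (resp.\ $r-1, r$ at those positions), yielding $r+1\rightarrow_{s_r s_{r+1}\T} r+2$ (resp.\ $r-1\rightarrow_{s_r s_{r-1}\T} r$). This gives (2) or (3), contradicting our assumption.

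The main obstacle is ruling out subcase (B2), where $s_{r-1}\T$ fails to be standard and the direct path verification does not work. Here I would invoke the hypothesis that (1) fails in its full strength: each transition $t \to t+1$ with $1\le t\le r-2$ is in order, so entries $1,2,\dots,r-1$ fill their nodes in $\T$ in reading order. Their positions thus form a sub-Young-diagram $\nu \subseteq \mu$, with $(a',b',m)$ as the last node of $\nu$ in reading order, so $\nu^{(m)}$ is a partition whose bottommost nonempty row is $a'$ of length $b'$. By the partition property, row $a<a'$ of $\nu^{(m)}$ has length $\ge b' \ge b+2$, so $(a,b+2,m) \in \nu$ carries an entry $\le r-1$. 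But row-strictness of $\T$ in row $a$ of $\mu^{(m)}$ gives $\T(a,b+2,m) > \T(a,b+1,m) = r+1$, contradicting $\T(a,b+2,m) \le r-1$. Hence (B2) cannot occur, completing the proof.
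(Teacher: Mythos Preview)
Your argument is correct and follows the same contrapositive strategy as the paper: show that if $\T\neq\T^\mu$ then some neighboring $t$ makes $\T$ $t$-terminal, and then verify the $(r,r\pm1)$-terminal structure in the two boundary cases $t=r\pm1$.

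One remark on efficiency: subcase (B2) is in fact impossible, and your own preliminary observation already shows this. You proved that whenever $t+1$ occupies an earlier node than $t$ in a standard tableau and they lie in the same component, one must have $t\nearrow_\T t+1$. Applying this with $t=r-1$ in Case~B forces $r-1\nearrow_\T r$, i.e.\ $a'>a$ and $b'<b$, which is exactly (B1). (Concretely: since $(a',b',m)\in\mu$ and $a'>a$, the node $(a,b',m)$ exists; row-strictness in row $a$ gives $\T(a,b',m)>\T(a,b+1,m)=r+1$ while column-strictness in column $b'$ gives $\T(a,b',m)<\T(a',b',m)=r-1$, a contradiction.) So the detour through the reading-order filling of $\{1,\dots,r-1\}$ is unnecessary, and with (B2) removed your case list matches the paper's conditions (b) and (c) exactly.
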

\begin{proof}
Consider the following three properties:
\begin{enumerate}
\item[(a)] there is $t$ with $|t-r|>1$ such that either $t \nearrow_{\T} t+1$
or $t$ is in a later component of $\T$ than $t+1$;
\item[(b)] either $r+1 \nearrow_\T r+2$ or $r+1$ is in a later
component of $\T$ than $r+2$;
\item[(c)] either $r-1 \nearrow_\T r$ or $r-1$ is in a later
component of $\T$ than $r$.
\end{enumerate}
It is easy to see that (a) implies (1), (b) implies
(2), and (c) implies
(3). It remains to observe that if none of (a), (b) or (c) hold 
then $\T$ is minimal in the
weak Bruhat graph, hence $\T = \T^\mu$ as in (4).
\end{proof}

\iffalse
\begin{proof}
Assume $\T\neq \T^\mu$. 
Then there is $1\leq t<d$ such that $\T$ is $t$-terminal, i.e. either $t\nearrow_\T t+1$ or $t$ is in a later component of $\T$ than $t+1$. If we can choose such a $t$ so that $|t-r|>1$ then (1) holds. Otherwise, if $t=r-1$ then  in $\St(\mu)$ we have
$$s_rs_{r-1}\T\stackrel{r}{\longrightarrow}s_{r-1}\T\stackrel{r-1}{\longrightarrow}\T
\quad\text{and}\quad r+1\rightarrow_{s_rs_{r+1}  \T}r+2,$$ 
i.e. (3) holds. The case $t=r+1$ similarly leads to (2). 
\end{proof}
\fi

Let $(a,b,n)$ be a node of $\mu\in\Par_d$ such that $(a+1,b,n)$ 
is also a node of $\mu$. 
The {\em $(a,b,n)$-Garnir belt} of $\mu$ consists of the nodes $(a,c,n)$ for $b\leq c\leq \mu^{(n)}_a$ and the nodes $(a+1,g,n)$ for $1\leq g\leq b$. Here is a picture of the  $(2,3,2)$-Garnir belt for $\mu=((3,1),(7,6,5,2))$. 
$$
\begin{array}{l}\diagram{&&\cr\cr}\\\diagram{
&&&&&&\cr
&&$\times$&$\times$&$\times$&$\times$\cr
$\times$&$\times$&$\times$&&\cr
&\cr
}\end{array}
\begin{picture}(0,0)
\thicklines
\put(-99,-13.7){\line(1,0){27.5}}
\put(-99,-14){\line(1,0){27.5}}
\put(-99,-14.3){\line(1,0){27.5}}
\put(-99,-28.2){\line(1,0){40.5}}
\put(-99,-28.5){\line(1,0){40.5}}
\put(-99,-28.8){\line(1,0){40.5}}
\put(-72,-14){\line(0,1){15}}
\put(-72.3,-14){\line(0,1){15}}
\put(-71.7,-14){\line(0,1){15}}
\put(-59,-28.5){\line(0,1){15}}
\put(-58.7,-28.5){\line(0,1){15}}
\put(-59.3,-28.5){\line(0,1){15}}
\put(-72,.5){\line(1,0){54}}
\put(-72,.2){\line(1,0){54}}
\put(-72,.8){\line(1,0){54}}
\put(-58.5,-14){\line(1,0){40.5}}
\put(-58.5,-13.7){\line(1,0){40.5}}
\put(-58.5,-14.3){\line(1,0){40.5}}
\end{picture}
$$

The {\em $(a,b,n)$-Garnir tableau (of shape $\mu$)}\, is the unique maximal standard $\mu$-tableau with respect to the Bruhat order among the standard $\mu$-tableaux which agree with $\T^\mu$ outside the $(a,b,n)$-Garnir belt. 
(This includes the degenerate situation $b=1=\mu_a^{(n)}$ when 
the $(a,b,n)$-Garnir tableau is simply equal to $\T^\mu$.)
Here is a picture of the $(2,3,2)$-Garnir tableau for $\mu=((3,1),(7,6,5,2))$. 
$$
\begin{array}{l}
\diagram{\small{1}&\small{2}&\small{3}\cr \small{4}\cr}\\ \diagram{
\small{5}&\small{6}&\small{7}&\small{8}&\small{9}&\small{10}&\small{11}\cr
\small{12}&\small{1}3&{16}&{18}&{19}&{20}\cr
{14}&{15}&{17}&\small{21}&\small{22}\cr
\small{23}&\small{24}\cr
}\end{array}
\begin{picture}(0,0)
\thicklines
\put(-99,-13.7){\line(1,0){27.5}}
\put(-99,-14){\line(1,0){27.5}}
\put(-99,-14.3){\line(1,0){27.5}}
\put(-99,-28.2){\line(1,0){40.5}}
\put(-99,-28.5){\line(1,0){40.5}}
\put(-99,-28.8){\line(1,0){40.5}}
\put(-72,-14){\line(0,1){15}}
\put(-72.3,-14){\line(0,1){15}}
\put(-71.7,-14){\line(0,1){15}}
\put(-59,-28.5){\line(0,1){15}}
\put(-58.7,-28.5){\line(0,1){15}}
\put(-59.3,-28.5){\line(0,1){15}}
\put(-72,.5){\line(1,0){54}}
\put(-72,.2){\line(1,0){54}}
\put(-72,.8){\line(1,0){54}}
\put(-58.5,-14){\line(1,0){40.5}}
\put(-58.5,-13.7){\line(1,0){40.5}}
\put(-58.5,-14.3){\line(1,0){40.5}}
\end{picture}
$$
%Note that the $m$-Garnir tableau can also be described as follows. Let $m,m+1,\dots,m+a$ be the nodes in the $m$-Garnir belt in $\mu$, so that $m,m+1,\dots,m+k$ are in a column $c$, and $m+k+1,\dots m+a$ are in the column $c+1$. To get the $m$-Garnir tableau put the numbers $1,\dots,m-1,m+a+1,\dots,d$ into the same nodes as in $\T^\mu$, the numbers $m,m+1,\dots,m+a-k-1$ into the top $a-k$ nodes of the column $c+1$ from top to bottom, the numbers $m+a-k,\dots, m+a$ into the bottom $k+1$ nodes of the column $c$ from top to bottom, and finally, swap the numbers $m+a-k-1$ and $m+a-k$.
The following lemma is immediate from the definitions:

\begin{Lemma}\label{LAgrees}%{\rm \cite{}}%{\bf ()}
Let $\mu\in\Par_d$, $\Stab$ be a $\mu$-tableau, and $\T$ be the 
$(a,b,n)$-Garnir tableau of shape $\mu$. If $\Stab\unlhd\T$ then $\Stab$ agrees with $\T^\mu$ outside the $(a,b,n)$-Garnir belt. 
\end{Lemma}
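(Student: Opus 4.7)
The plan is to reduce the statement to the standard fact that the Bruhat order on $\Sigma_d$ respects standard parabolic subgroups.

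First I would note that the reading order used to build $\T^\mu$ traverses the nodes of the $(a,b,n)$-Garnir belt consecutively: it runs through the end of row $a$ of component $n$ from column $b$ up to column $\mu^{(n)}_a$, and then immediately continues with the start of row $a+1$ of component $n$ up to column $b$. Hence the set $S$ of entries placed by $\T^\mu$ inside the belt is an interval of consecutive integers $\{M_1,M_1+1,\dots,M_2\}$, where $M_1$ and $M_2$ are the reading positions of $(a,b,n)$ and $(a+1,b,n)$. Because the Garnir tableau $\T$ is defined to agree with $\T^\mu$ on the complement of the belt, the entries of $\T$ inside the belt are again exactly $S$, merely permuted. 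Equivalently, $w_\T$ fixes every integer outside $S$ and permutes those in $S$, so $w_\T$ lies in the parabolic subgroup $W_J:=\langle s_{M_1},\dots,s_{M_2-1}\rangle\cong S_{\{M_1,\dots,M_2\}}$ of $\Sigma_d$.

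Next I would invoke a standard fact, immediate from the subword characterisation of Bruhat order recalled in Section~\ref{SNot}: if $u\leq w_\T$ and $w_\T\in W_J$, then $u\in W_J$. Indeed, one can choose a reduced decomposition of $w_\T$ that uses only the simple reflections $s_{M_1},\dots,s_{M_2-1}$ (its length inside $W_J$ equals its length inside $\Sigma_d$), and every subword of such a decomposition still lies in $W_J$. Applied to $w_\Stab\leq w_\T$ this forces $w_\Stab\in W_J$, so $w_\Stab$ fixes every integer outside $S$.

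Finally, $\Sigma_d$ acts on tableaux by permuting entries, so $\Stab(A)=w_\Stab(\T^\mu(A))$ for every node $A$. When $A$ lies outside the Garnir belt the value $\T^\mu(A)$ lies outside $S$, hence is fixed by $w_\Stab$, giving $\Stab(A)=\T^\mu(A)$, which is the desired conclusion. The only genuinely combinatorial step is verifying the contiguity of the Garnir belt inside the reading order of $\T^\mu$; once that is in hand the rest is a formal application of the parabolic-subgroup property of Bruhat order, and the lemma is indeed ``immediate from the definitions'' as claimed.
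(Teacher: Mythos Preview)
Your argument is correct and is exactly the natural way to unpack the paper's claim that the lemma is ``immediate from the definitions'': the Garnir belt occupies a contiguous block in the reading order, so $w_\T$ lies in the corresponding standard parabolic subgroup, and the subword description of Bruhat order then forces $w_\Stab$ to lie there too. The paper gives no further detail, so your proof is essentially the intended one.
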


\begin{Proposition}\label{PNext}%{\rm \cite{}}%{\bf ()}
Let $\mu\in\Par_d$, $\T$ be a standard $\mu$-tableau and $r\downarrow_\T r+1$. 
Assume that $r$ occupies the node $(a,b,n)$ in $\T$.
Then at least one of the following holds:
\begin{enumerate}
\item[{\rm (1)}] there is $t$ with $|t-r|>1$ such that $\T$ is $t$-terminal, and $r\downarrow_{s_t\T} r+1$;
\item[{\rm (2)}] $\T$ is $(r,r+1)$-terminal, and $r+1\downarrow_{s_{r}s_{r+1}\T}r+2$;
\item[{\rm (3)}] $\T$ is $(r,r-1)$-terminal, and $r-1\downarrow_{s_{r}s_{r-1}\T}r$;
\item[{\rm (4)}] $\T$ is the $(a,b,n)$-Garnir tableau. 
\end{enumerate} 
\end{Proposition}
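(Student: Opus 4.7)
My plan is to mirror the argument for Proposition~\ref{PAbove} by introducing three auxiliary conditions:
(a) there exists $t$ with $|t-r|>1$ such that either $t\nearrow_\T t+1$ or $t$ is in a later component of $\T$ than $t+1$;
(b) either $r+1\nearrow_\T r+2$ or $r+1$ is in a later component than $r+2$, and moreover $r+2$ does not occupy the node $(a, b+1, n)$;
(c) either $r-1\nearrow_\T r$ or $r-1$ is in a later component than $r$, and moreover $r-1$ does not occupy the node $(a+1, b-1, n)$.
I will show that (a) implies (1), (b) implies (2), (c) implies (3), and that the simultaneous failure of all three forces (4).

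The first three implications go through directly. For (a)$\Rightarrow$(1), the transposition $s_t$ fixes the positions of $r$ and $r+1$ when $|t-r|>1$, so the relation $r\downarrow r+1$ is preserved. For (b)$\Rightarrow$(2), the first clause of (b) says $\T$ is $(r+1)$-terminal, hence $s_{r+1}\T$ is standard and $r+2$ is strictly earlier than $r+1$ in linear order; excluding $(a, b+1, n)$ together with row- and column-strictness around the corner nodes $(a, b, n), (a+1, b, n)$ forces $r+2$ to lie either NE of $r$ inside component $n$ (so $r\nearrow_\T r+2$) or in an earlier component than $r$, and in either case strictly earlier than $(a, b, n)$. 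Thus in $s_{r+1}\T$ the entry $r+1$ now sits earlier than $r$, so $s_rs_{r+1}\T$ is also standard, realizing the $(r,r+1)$-terminal chain of (2); the accompanying relation $r+1\downarrow_{s_rs_{r+1}\T}r+2$ is immediate by tracking positions. Implication (c)$\Rightarrow$(3) is symmetric.

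The main obstacle is the last step: showing that if (a), (b), and (c) all fail then $\T$ must be the $(a, b, n)$-Garnir tableau. Write $L$ for the linear position of $(a, b, n)$ and $M := L + \mu_a^{(n)}$ for that of $(a+1, b, n)$. Failure of (a) implies that both $1, 2, \dots, r-1$ and $r+2, r+3, \dots, d$ appear in strictly increasing linear position order in $\T$. Row- and column-strictness combined with $r$ at $(a, b, n)$ and $r+1$ at $(a+1, b, n)$ forces every node $(a, c, n)$ with $c>b$ to carry an entry from $\{r+2, \dots, d\}$, and every node $(a+1, c, n)$ with $c<b$ to carry an entry from $\{1, \dots, r-1\}$. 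Failure of (b) splits into two alternatives: either $\T$ is not $(r+1)$-terminal, in which case every position occupied by $\{r+2, \dots, d\}$ exceeds $M$ and the row-$a$ constraint forces $b = \mu_a^{(n)}$; or $r+2$ sits at $(a, b+1, n)$, and then the increasing-order constraint together with row-strictness fills $(a, b+2, n), (a, b+3, n), \dots, (a, \mu_a^{(n)}, n)$ with $r+3, r+4, \dots$ consecutively. Failure of (c) is analyzed symmetrically.

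A direct count in each of the four resulting sub-cases---comparing the sizes of $\{1, \dots, r-1\}$ and $\{r+2, \dots, d\}$ against the positions available outside $\{L, M\}$---pins down $r = L + b - 1$ (or, in the degenerate $\mu_a^{(n)} = 1$ sub-case, $r = L$ with $\T = \T^\mu$, which is already the $(a, 1, n)$-Garnir tableau) and uniquely determines every entry of $\T$. Reading off the filling then recovers exactly the $(a, b, n)$-Garnir tableau, establishing (4).
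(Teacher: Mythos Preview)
Your proof is correct and follows essentially the same strategy as the paper's: your conditions (a), (b), (c) are equivalent to the paper's (the paper writes (b) as ``$r\nearrow_\T r+2$ or $r+1$ in a later component than $r+2$'' and (c) as ``$r-1\nearrow_\T r+1$ or $r-1$ in a later component than $r$'', which unwind to your formulations once one notes that the only way $r+2$ can be NE of $r+1$ but not strictly NE of $r$ is to sit at $(a,b+1,n)$, and symmetrically for $r-1$). Your final paragraph supplies more detail than the paper does for the step ``if (a), (b), (c) all fail then $\T$ is the Garnir tableau,'' but the argument is the same.
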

\begin{proof}
Consider the following three properties:
\begin{enumerate}
\item[(a)] there is $t$ with $|t-r|>1$ such that either $t \nearrow_{\T} t+1$
or $t$ is in a later component of $\T$ than $t+1$;
\item[(b)] either $r \nearrow_\T r+2$ or $r+1$ is in a later
component of $\T$ than $r+2$;
\item[(c)] either $r-1 \nearrow_\T r+1$ or $r-1$ is in a later
component of $\T$ than $r$.
\end{enumerate}
It is easy to see that (a) implies (1), (b) implies
(2), and (c) implies
(3). It remains to observe that if none of (a), (b) or (c) hold 
then $\T$ is the $(a,b,n)$-Garnir tableau as in (4).
To see that, consider in turn the positions of $r-1,\dots,1$
then the positions of $r+2,\dots,d$ under the assumption that (a), (b) and (c)
do not hold.
\end{proof}

\subsection{Degree of a standard tableau}
Let $\mu\in\Par_d$. 
Let $A$ be a removable $i$-node and $B$ be an addable $i$-node of 
$\mu$. Following \cite[\S4.2]{LLT} and \cite[Definition 2.4]{AM}, we set
\begin{equation}\label{EDMUA}
\begin{split}
d_A(\mu):= \#\{\text{addable $i$-nodes of $\mu$ strictly below $A$}\}\qquad\!\!\quad\\
-\#\{\text{removable $i$-nodes of $\mu$ strictly below than $A$}\};
\end{split}
\end{equation} 
\begin{equation}\label{EDMUB}
\begin{split}
d^B(\mu):=\#\{\text{addable $i$-nodes of $\mu$ strictly above than $B$}\}\\
-\#\{\text{removable $i$-nodes of $\mu$ strictly above than $B$}\}.
\end{split}
\end{equation} 
Also, for $i\in I$, define
\begin{equation}\label{EDKWeight}
d_i(\mu):=\#\{\text{addable $i$-nodes of $\mu$}\}
-\#\{\text{removable $i$-nodes of $\mu$}\}.
\end{equation} 
Finally, for $\al\in Q_+$, define the {\em defect} of $\al$ to be
\begin{equation}\label{defdef}
\defect(\al)=(\La,\al)-(\al,\al)/2.
\end{equation}

\begin{Lemma}\label{LDefect}%{\rm \cite{}}%{\bf ()}
Let $\mu\in \Par_d$, $\al=\cont(\mu)$, 
and $A$ be a removable $i$-node of $\mu$. Then:
\begin{enumerate}
\item[{\rm (i)}] $d_A(\mu)+d^A(\mu_A)=d_i(\mu)+1.$
\item[{\rm (ii)}] $d_i(\mu)=(\La-\al,\al_i)$.
\item[{\rm (iii)}] $\defect(\alpha) = \defect(\alpha-\alpha_i)
+ d_i(\mu)+1$.
\end{enumerate}
\end{Lemma}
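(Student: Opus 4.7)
I would argue that removing $A=(a,b,m)$ from $\mu$ alters the addable/removable status of no $i$-node other than $A$ itself. Indeed, the four lattice neighbors of $A$ in its component, namely $(a\pm 1, b, m)$ and $(a, b\pm 1, m)$, have residues $i\pm 1 \neq i$ and are thus not $i$-nodes; moreover, the addable/removable status of any other node $X$ is determined by the presence or absence of $X$'s own four lattice neighbors, none of which is $A$. Since $A$ itself is a removable $i$-node of $\mu$ and an addable $i$-node of $\mu_A$, writing $N^{\pm}_{\uparrow}$ (resp.\ $N^{\pm}_{\downarrow}$) for the common number of addable $(+)$ or removable $(-)$ $i$-nodes strictly above (resp.\ below) $A$ in $\mu$ and $\mu_A$ yields
\[
d_i(\mu) = N^+_\uparrow + N^+_\downarrow - N^-_\uparrow - N^-_\downarrow - 1, \quad d_A(\mu) = N^+_\downarrow - N^-_\downarrow, \quad d^A(\mu_A) = N^+_\uparrow - N^-_\uparrow,
\]
and (i) follows by adding the latter two identities.

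\textbf{Part (ii).} I would proceed by induction on $d = |\mu|$. The base case $\mu = \emptyset$ is immediate: $\cont(\emptyset) = 0$, there are no removable nodes, and the addable nodes are $(1,1,m)$ of residue $k_m$ for $m=1,\dots,l$, giving $d_i(\emptyset) = \#\{m : k_m = i\} = (\La, \al_i)$. For the inductive step, write $\mu = \nu^B$ for some addable $j$-node $B$ of $\nu \in \Par_{d-1}$; then $\cont(\mu) = \cont(\nu) + \al_j$, so the claim reduces to verifying that adding $B$ changes $d_i$ by $-a_{i,j}$. This is an elementary case check on the statuses of $B$ itself and of its (at most) four lattice neighbors, whose residues are $j$ and $j \pm 1$, distinguishing the cases $i = j$, $i \in \{j-1, j+1\}$ (with the subcase $e = 2$ requiring care since $j-1 \equiv j+1$), and $i \notin \{j-1, j, j+1\}$. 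The main obstacle in the whole lemma is this case analysis, though no individual case is difficult.

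\textbf{Part (iii).} This follows immediately by expanding the definition (\ref{defdef}) and invoking part (ii):
\begin{align*}
\defect(\al) - \defect(\al - \al_i)
&= (\La, \al_i) - \tfrac{1}{2}\bigl((\al,\al) - (\al - \al_i, \al - \al_i)\bigr) \\
&= (\La, \al_i) - (\al, \al_i) + \tfrac{1}{2}(\al_i, \al_i) \\
&= (\La - \al, \al_i) + 1 = d_i(\mu) + 1,
\end{align*}
using $(\al_i, \al_i) = 2$ in the third equality and part (ii) in the last.
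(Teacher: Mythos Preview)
Your proof is correct and follows essentially the same approach as the paper: the paper simply states that (i) is clear, (ii) is easily checked by induction on $d$, and (iii) follows from (ii) and the definition of defect. Your argument supplies exactly the details one would fill in for this sketch, including the same inductive structure for (ii) and the same bilinear expansion for (iii).
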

\begin{proof}
Part (i) is clear, (ii) is easily checked by induction on $d$,
and then (iii) follows from (ii) and the definition (\ref{defdef}).
\end{proof}

Given $\mu \in \Par_d$ and $\T \in \St(\mu)$,
the {\em degree} of $\T$ is defined inductively as follows. If $d=0$, then 
$\T$ is the empty tableau $\emptyset$, and we set
$\deg(\T):=0$.
Otherwise, let $A$ be the node occupied by $d$ in $\T$, so that $\T_{\leq(d-1)}$ 
is a standard tableau of shape $\mu_A$, and set
\begin{equation}\label{EDegTab}
\deg(\T):=d_A(\mu)+\deg(\T_{\leq(d-1)}).
\end{equation}
We also define a dual notion of {\em codegree} via
\begin{equation}
\codeg(\emptyset):=0,\quad \codeg(\T):=d^A(\mu_A)+\codeg(\T_{\leq(d-1)})
\end{equation}
Using $\codeg$ instead of $\deg$ for the degree of a tableau
leads only to 
negation and  a ``global shift'' by $\defect(\al)$: 

\begin{Lemma}\label{LDPrime}
Let $\mu\in\Par_d$, $\cont(\mu)=\al$, and $\T\in\St(\mu)$. Then 
$$\deg(\T)+\codeg(\T)=\defect(\al).$$ 
\end{Lemma}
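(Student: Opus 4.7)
The plan is to prove this by induction on $d$, using Lemma~\ref{LDefect} as the essential input. The base case $d=0$ is immediate: both $\deg(\T)$ and $\codeg(\T)$ are $0$ by definition, and $\defect(0)=(\La,0)-(0,0)/2=0$, so both sides vanish.

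For the inductive step, let $A$ be the node of $\T$ occupied by $d$, and let $i := \res A$ so that $A$ is a removable $i$-node of $\mu$. Then $\T_{\leq(d-1)} \in \St(\mu_A)$ and $\cont(\mu_A) = \al - \al_i$. Adding the two defining recursions for $\deg$ and $\codeg$ gives
\begin{equation*}
\deg(\T) + \codeg(\T) = d_A(\mu) + d^A(\mu_A) + \bigl(\deg(\T_{\leq(d-1)}) + \codeg(\T_{\leq(d-1)})\bigr).
\end{equation*}
By the inductive hypothesis applied to $\T_{\leq(d-1)}$, the parenthesized sum equals $\defect(\al-\al_i)$. Using Lemma~\ref{LDefect}(i), we replace $d_A(\mu)+d^A(\mu_A)$ by $d_i(\mu)+1$, so
\begin{equation*}
\deg(\T)+\codeg(\T) = d_i(\mu)+1+\defect(\al-\al_i).
\end{equation*}
Finally Lemma~\ref{LDefect}(iii) identifies the right hand side with $\defect(\al)$, completing the induction.

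The argument is entirely mechanical once Lemma~\ref{LDefect} is in hand; there is no real obstacle. The content lies in the two identities $d_A(\mu)+d^A(\mu_A) = d_i(\mu)+1$ and $\defect(\al)-\defect(\al-\al_i) = d_i(\mu)+1$, which are exactly what is needed to match the $\deg$/$\codeg$ recursion to the additivity of defect along the removal sequence $\mu \supset \mu_A \supset \cdots \supset \emptyset$ determined by $\T$.
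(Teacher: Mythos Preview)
Your proof is correct and follows essentially the same approach as the paper's own proof: induction on $d$, expanding $\deg(\T)+\codeg(\T)$ via the defining recursions, then applying Lemma~\ref{LDefect}(i) and (iii) together with the inductive hypothesis. The paper's version is simply more terse.
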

\begin{proof}
Apply induction on $d$, the induction base being clear. 
For the induction step, let $A$ be the node of $\T$ 
containing the entry $d$ and let $i = \res(A)$.
Then by Lemma~\ref{LDefect} and induction we get that
\begin{align*}
\deg(\T)+\codeg(\T)&= d_A(\mu)+\deg(\T_{\leq(d-1)})+d^A(\mu_A)+\codeg(\T_{\leq(d-1)})\\
&=\defect(\alpha-\alpha_i)+d_i(\mu)+1 = \defect(\alpha),
\end{align*}
as required.
\end{proof}

\begin{Proposition}\label{PDeg}%{\rm \cite{}}%{\bf ()}
Let $\mu\in\Par_d$, $\Stab,\T\in\St(\mu)$, $\T\stackrel{r}{\longrightarrow} \Stab$, $\bi^\T=(i_1,\dots,i_d)$, and $a_{i_r,i_{r+1}}$ be the number defined in (\ref{ECM}). Then 
$$
\deg(\Stab)-\deg(\T)=-a_{i_r,i_{r+1}}.
$$
\end{Proposition}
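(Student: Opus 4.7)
The plan is to reduce the statement to a local computation at the two positions $r$ and $r+1$, and then carry out a case analysis on the geometry of the nodes involved. Since $\Stab=s_r\T$ differs from $\T$ only by swapping the entries $r$ and $r+1$, the subtableaux $\T_{\leq k}$ and $\Stab_{\leq k}$ have the same shape and same node containing $k$ for every $k\neq r,r+1$. Hence all but two summands in the inductive definition $\deg(\T)=\sum_k d_{A_k(\T)}(\sh(\T_{\leq k}))$ cancel, yielding
$$
\deg(\T)-\deg(\Stab)=\bigl(d_A(\nu)-d_A(\xi)\bigr)+\bigl(d_B(\xi)-d_B(\nu')\bigr),
$$
where $A$ (resp.\ $B$) is the node of $\T$ occupied by $r$ (resp.\ $r+1$), $\xi=\sh(\T_{\leq r+1})=\sh(\Stab_{\leq r+1})$, $\nu=\xi\setminus\{B\}$ and $\nu'=\xi\setminus\{A\}$. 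The task becomes to show that this sum equals $a_{i,j}$, where $i=\res A$, $j=\res B$.

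The key observation is that $A$ and $B$ are both removable corners of $\xi$, so $d_A(\nu)-d_A(\xi)$ records the effect on addable/removable $i$-nodes \emph{strictly below $A$} of removing the single $j$-node $B$; this effect is entirely local, involving only $B$ itself (when $i=j$, in which case removing $B$ converts a removable $i$-node into an addable one) and the at most four neighbours of $B$ (of residues $j\pm 1$). A symmetric description applies to $d_B(\xi)-d_B(\nu')$ with the roles of $A,B$ and $i,j$ interchanged. From the definition of the weak Bruhat edge, two geometric cases are possible: either (i) $A$ and $B$ lie in different components of $\mu$ (with $A$ in the earlier component), or (ii) they lie in the same component with $A$ strictly north-east of $B$. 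In case (i), every neighbour of $B$ lies strictly below $A$ (and none strictly below $B$ in the sense relevant to $d_B$), while in case (ii) one must decide this on a case-by-case basis, especially when $A$ is diagonally adjacent to $B$.

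Once this framework is in place, the proposition reduces to a finite residue-subcase check against the values of $a_{i,j}$ in (\ref{ECM}). If $i\nslash j$, no $i$-nodes or $j$-nodes have their status changed and both brackets vanish, giving $0=a_{i,j}$. If $i=j$, only the node $B$ itself (for the first bracket) and $A$ itself (for the second) contribute, each worth $+1$ twice, and one obtains $2=a_{i,j}$. If $j=i\pm 1$ (the $\to$ or $\leftarrow$ case), a short calculation shows that among the two $i$-coloured neighbours of $B$ exactly one contributes to $d_A(\nu)-d_A(\xi)$, producing $-1$; the second bracket vanishes by symmetry, yielding $-1=a_{i,j}$. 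The $\rightleftarrows$ case ($e=2$) combines both contributions and gives $-2=a_{i,j}$. The main obstacle is the bookkeeping in case (ii) when the neighbourhoods of $A$ and $B$ overlap: one must check carefully which neighbouring positions are \emph{strictly} below $A$ versus $B$, and verify that the two bracketed terms split the contribution $a_{i,j}$ without double counting. Once these local computations are verified (using the parametrisation of $\xi$ by the lengths of the rows immediately above and below $B$, and symmetrically for $A$), the formula $\deg(\T)-\deg(\Stab)=a_{i,j}$, equivalent to the claim, follows in every case.
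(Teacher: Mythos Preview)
Your strategy—reduce $\deg(\T)-\deg(\Stab)$ to the two local terms
\[
\bigl(d_A(\nu)-d_A(\xi)\bigr)+\bigl(d_B(\xi)-d_B(\nu')\bigr)
\]
and then run a residue case analysis—is precisely the paper's approach. The paper makes one clean observation that you do not: since $A$ lies strictly above $B$, removing $A$ cannot change the addable/removable status of any $j$-node \emph{strictly below} $B$. (The only neighbour of $A$ that can fall into $B$'s row is $(a_A+1,b_A)$, and this is only in play in the diagonally-adjacent situation, where its residue is $j+1\neq j$.) Hence the second bracket $d_B(\xi)-d_B(\nu')$ vanishes identically, and the whole proposition collapses to computing the single difference $d_A(\nu)-d_A(\xi)$. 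That is why the paper's case analysis is so short; they also first reduce to $r=d-1$, which is cosmetic.

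Your sketch would still go through if the bookkeeping were done carefully, but as written there are slips in the case descriptions. In the $i=j$ case you say ``$A$ itself (for the second) contributes''; it cannot, because $A$ sits above $B$ and $d_B$ only sees nodes strictly below $B$. The entire $+2$ comes from the first bracket: removing $B$ turns a removable $i$-node below $A$ into an addable one, a net change of $+2$. Likewise, ``the second bracket vanishes by symmetry'' in the $j=i\pm1$ case is not a valid justification—the configuration is asymmetric in $A$ and $B$; the bracket vanishes for the positional reason above, not by any symmetry.
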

\begin{proof}
We may assume that $r=d-1$. Let $A$ (resp. $B$) be the node occupied with $d$ (resp. $d-1$) in $\T$. 
By assumption, $B$ is above $A$. We have
\begin{align*}
\deg(\T)&=d_A(\mu)+d_B(\mu_A)+\deg(\T_{\leq(d-2)}),\\
\deg(\Stab)&=d_B(\mu)+d_A(\mu_B)+\deg(\Stab_{\leq(d-2)}).
\end{align*}
Note that $\T_{\leq(d-2)}=\Stab_{\leq(d-2)}$. 
Also, since $B$ is above $A$, we have $d_A(\mu)=d_A(\mu_B)$. So it 
remains to prove that
$$
d_B(\mu_A)-d_B(\mu)=a_{i_{d-1},i_{d}}.
$$

Let $A$ be an $i$-node and $B$ be a $j$-node, so that $i_d=i,i_{d-1}=j$. 
If $i=j$ then $a_{ij}=2$, and the removal of $A$ leads to the disappearance of a removable 
$i$-node and the appearance of a new addable $i$-node below $B$. So in this case $
d_B(\mu_A)-d_B(\mu)=2,
$ as required.

If $i-j=\pm 1$ and $p>2$, then $a_{ij}=-1$, and the removal of $A$ leads either to the disappearance of an addable $j$-node or to the appearance of a new removable $j$-node below $B$. So in this case 
$d_B(\mu_A)-d_B(\mu)=1$, as required.

If $i-j=\pm 1$ and $p=2$, then $a_{ij}=-2$, and the removal of $A$ leads either to the disappearance of an addable $j$-node and appearance of removable $j$-node, or to the appearance of two new removable $j$-nodes below $B$. 
So in this case $d_B(\mu_A)-d_B(\mu)=2$, as required.

Finally, if $i-j\neq 0,\pm 1$, then we have 
$
d_B(\mu_A)-d_B(\mu)=0=a_{ij}.
$
\end{proof}

For future reference we interpret Proposition~\ref{PDeg} in terms of 
the degrees of the elements in $H_d^\La$ from Theorem~\ref{TBK}:

\begin{Corollary}\label{CDeg}%{\rm \cite{}}%{\bf ()}
Let $\mu\in\Par_d$ and $\T\in\St(\mu)$. If $w_\T=s_{r_1}\dots s_{r_m}$ is a reduced decomposition in $\Sigma_d$, then 
$$\deg(\T)-\deg(\T^\mu)=\deg(\psi_{r_1}\dots\psi_{r_m}e(\bi^\mu)).$$ 
\end{Corollary}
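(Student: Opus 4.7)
The plan is to induct on $m=\ell(w_\T)$. The base case $m=0$ is immediate, since then $\T=\T^\mu$ and both sides of the claimed equality vanish.

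For the inductive step I would factor off the leftmost simple reflection: write $w_\T=s_{r_1}v$ with $v:=s_{r_2}\cdots s_{r_m}$, still a reduced expression of length $m-1$, and set $\T':=v\T^\mu$. The core claim to establish is that $\T'$ is again a standard tableau and that $\T'\stackrel{r_1}{\longrightarrow}\T$ is an edge of the weak Bruhat graph of \S\ref{SSWBO}. Granted this, the inductive hypothesis applied to $\T'$ (with the reduced decomposition $s_{r_2}\cdots s_{r_m}$ of $w_{\T'}=v$) gives $\deg(\T')-\deg(\T^\mu)=\deg(\psi_{r_2}\cdots\psi_{r_m}e(\bi^\mu))$, while Proposition~\ref{PDeg} applied to the edge $\T'\stackrel{r_1}{\longrightarrow}\T$ gives $\deg(\T)-\deg(\T')=-a_{i^{\T'}_{r_1},i^{\T'}_{r_1+1}}$, which by Theorem~\ref{TBK} is precisely $\deg(\psi_{r_1}e(\bi^{\T'}))$. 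Using $\psi_re(\bj)=e(s_r\bj)\psi_r$ together with $\bi^{\T'}=v\bi^\mu$, the factorization
$$
\psi_{r_1}\cdots\psi_{r_m}e(\bi^\mu)=\psi_{r_1}e(\bi^{\T'})\cdot\psi_{r_2}\cdots\psi_{r_m}e(\bi^\mu)
$$
then exhibits the right-hand side as a product of homogeneous elements whose degrees add telescopically to $\deg(\T)-\deg(\T^\mu)$, as required.

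It remains to establish the core claim, which is the main technical content of the argument. Since $w_\T=s_{r_1}v$ is a reduced expression, $\ell(s_{r_1}v)>\ell(v)$, which in type $A$ is equivalent to $v^{-1}(r_1)<v^{-1}(r_1+1)$. Because $\T^\mu$ is filled in reading order, the position of the entry $k$ in $\T=w_\T\T^\mu$ coincides with the position of $w_\T^{-1}(k)$ in $\T^\mu$; writing $w_\T^{-1}=v^{-1}s_{r_1}$ one computes
$$
w_\T^{-1}(r_1)=v^{-1}(r_1+1)>v^{-1}(r_1)=w_\T^{-1}(r_1+1),
$$
so $r_1+1$ occupies an earlier node of $\T$ than $r_1$. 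Since $\T$ is standard, two entries in the ``wrong'' relative order can share neither a row nor a column within a single component, so $\T'=s_{r_1}\T$ is again standard; and by the same computation $r_1$ is earlier than $r_1+1$ in $\T'$, which is exactly the defining condition for the edge $\T'\stackrel{r_1}{\longrightarrow}\T$. The main obstacle is precisely this combinatorial translation between the reducedness of the chosen expression for $w_\T$ and the standardness together with the correct edge direction of the intermediate tableau $\T'$; once it is in place, the rest of the proof is routine bookkeeping with Proposition~\ref{PDeg} and the grading of Theorem~\ref{TBK}.
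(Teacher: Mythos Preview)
Your argument is correct and is precisely the intended one: the paper states this result as an immediate corollary of Proposition~\ref{PDeg} with no further proof, and your induction on $m=\ell(w_\T)$, peeling off $s_{r_1}$ and applying Proposition~\ref{PDeg} along the edge $\T'\stackrel{r_1}{\longrightarrow}\T$, is exactly how that corollary is meant to be unpacked. The only point worth noting is that your justification of the standardness of $\T'=s_{r_1}\T$ (from the fact that $r_1$ and $r_1+1$ cannot share a row or column in $\T$) is the content of the unlabelled lemma in \S\ref{SSWBO}, so you may simply cite that rather than re-deriving it.
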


%Define a {\em vertical $i$-brick} of size $h$ to be a set of $h$ consecutive nodes in the same column, the top of which has residue $i$. 
Again let $\mu\in\Par_d$ and $\T\in\St(\mu)$. A {\em $\T$-brick of size $h$} is a set of nodes occupied with $m+1,\dots,m+h$ such that $ m+1\rightarrow_\T\dots \rightarrow_\T m+h$. 
%We will often remove the nodes of $\mu$ in order prescribed by the tableau $\T$: first remove the node occupied by $d$ (in $\T$), then the node occupied by $d-1$ (in $\T$), etc. 
Removing certain    bricks does not change codegree: %the degree of a tableau:

\begin{Lemma}\label{LVB}%{\rm \cite{}}%{\bf ()}
Let $\mu\in\Par_d$, $\T\in\St(\mu)$, $m\rightarrow_\T m+1\rightarrow_\T\dots\rightarrow_\T d$, and $\Stab=\T_{\leq (m-1)}$. 
% be the tableau obtained by removing from $\T$ the nodes occupied with $m, m+1,\dots,d$.
\begin{enumerate}
\item[{\rm (i)}] If $m, m+1,\dots,d$ occupy nodes in the first row of the first component of\, $\T$ then $\codeg(\Stab)=\codeg(\T)$. 
\item[{\rm (ii)}] If $d-m+1=p$ then  $\codeg(\Stab)=\codeg(\T)$. 
\end{enumerate}
\end{Lemma}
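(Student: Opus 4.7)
The plan is to apply the recursive definition of codegree. Writing $A_k$ for the node of $\T$ occupied by the entry $k$ and $\nu^{(k)}:=\sh(\T_{\leq k})$, unfolding gives
$$
\codeg(\T)-\codeg(\Stab)\;=\;\sum_{k=m}^{d}d^{A_k}(\nu^{(k-1)}).
$$
Since $A_m,\dots,A_d$ all lie in a single row of one component, removing any of them leaves the shape outside that row intact; in particular, for each $k$, the addable and removable $j$-nodes strictly above $A_k$ in $\nu^{(k-1)}$ depend on $k$ only through the residue $i_k$.

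For part (i), the row in question is row $1$ of component $1$, above which nothing lies; hence every $d^{A_k}(\nu^{(k-1)})=0$ termwise.

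For part (ii), the hypothesis $d-m+1=e$ guarantees that the residues $i_m,\dots,i_d$ hit each element of $I=\Z/e\Z$ exactly once. The sum therefore rearranges as
$$
\sum_{i\in I}\bigl(\#\{\text{addable $i$-nodes above}\}-\#\{\text{removable $i$-nodes above}\}\bigr).
$$
To evaluate this I would rewrite each $d^{A_k}(\nu^{(k-1)})$ via Lemma~\ref{LDefect}(i) as $d_{i_k}(\nu^{(k)})+1-d_{A_k}(\nu^{(k)})$, sum over $k$, and telescope $\sum_{k=m}^{d}(d_{i_k}(\nu^{(k)})+1)$ through Lemma~\ref{LDefect}(iii) into the defect change $\defect(\cont(\mu))-\defect(\cont(\sh(\Stab)))$. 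The latter equals $(\La,\de)=l$, where $\de:=\sum_{i\in I}\al_i$ is the null root of affine type $A_{e-1}^{(1)}$, because $(\de,\de)=0$ and $(\cont(\cdot),\de)=0$. The problem is thereby reduced to showing $\deg(\T)-\deg(\Stab)=l$, which follows by the same pattern applied to the region strictly below the brick's row: that region is also stable across all $\nu^{(k)}$, and a dual computation evaluates $\sum_k d_{A_k}(\nu^{(k)})$ to $l$.

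The main obstacle is the boundary analysis in part (ii): the addable and removable nodes sitting in the row immediately above (or below) the brick's row are affected when the brick is shortened, and one must check that these boundary contributions pair up correctly across residues. The cancellation ultimately rests on the isotropy of $\de$ in affine $A_{e-1}^{(1)}$ and on the residues completing a full cycle around the affine Dynkin diagram.
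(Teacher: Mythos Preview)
Your argument for (i) is correct and is exactly the paper's.

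For (ii) you correctly unfold to $\sum_{k=m}^d d^{A_k}(\nu^{(k-1)})$ and correctly observe that the addable and removable nodes strictly above the brick's row in $\nu^{(k-1)}$ are independent of $k$, so the sum becomes
\[
\sum_{i\in I}\bigl(\#\{\text{addable $i$-nodes above}\}-\#\{\text{removable $i$-nodes above}\}\bigr).
\]
At this point the paper is already done: summing over all $i\in I$ gives simply $(\#\text{addable above})-(\#\text{removable above})$, which the paper asserts is zero. That one sentence is their entire argument for (ii).

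Your detour through Lemma~\ref{LDefect} and defects is both unnecessary and incorrect. You reduce to the claim $\sum_k d_{A_k}(\nu^{(k)})=l$, asserting a ``dual computation'' yields this. It does not: the very counting argument you have already set up, applied \emph{below} the brick's row, gives $l-n+1$ where $n$ is the component containing the brick. Concretely, take $l=2$, $e=2$, $k_1=k_2=0$, $\mu=(\emptyset,(2))$, $m=1$; then one checks directly that $\deg(\T)-\deg(\Stab)=1\neq 2=l$ and $\codeg(\T)-\codeg(\Stab)=1\neq 0$. So the proposed route for (ii) has a genuine gap.

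(The same example shows that the lemma as literally stated, and the paper's one-line justification, actually hold only when $n=1$: within component $n$ the count above the brick's row really is zero, because each strict decrease among $\mu^{(n)}_1,\dots,\mu^{(n)}_{a-1}$ contributes one addable and one removable node, and the addable node in row~$1$ pairs with the node at the end of row $a-1$, which is always removable in $\nu^{(k-1)}$; the extra $+1$'s come only from components $1,\dots,n-1$. The paper applies the lemma solely in the proof of Proposition~\ref{PGarnir}, after reducing to a single two-row partition, so this imprecision is harmless there. Your boundary worries were well founded, but the resolution is this elementary pairing, not a defect computation.)
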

\begin{proof}
(i) is clear since there are no removable or addable nodes above 
the first row of the first component. 

(ii) The amounts of removable nodes and of the addable nodes above 
a brick are the same. A brick of size $p$ has one node of each residue. The result follows. 
\end{proof}

\begin{Proposition}\label{PGarnir}%{\rm \cite{}}%{\bf ()}
Let $\mu\in\Par_d$, and $\T\in\St(\mu)$ be the $(a,b,n)$-Garnir tableau, with 
entry $r$ in node $(a,b,n)$.
%-Garnir belt taking up the nodes $m,\dots,m+a$, the number $r$ be in the node $m$, and the number $r+1$ be in the node $m+a$ in $\T$. %Let $a:=a_{i_r,i_{r+1}}$ be the Cartan number as defined in (\ref{ECM}).%
Let $\bi^\T=(i_1,\dots,i_d)$.  
If $\Stab\in \St(\mu)$ satisfies $\Stab\lhd \T$ and $\bi^\Stab=\bi^{s_r  \T}$, then 
$\deg(\Stab)-\deg(\T)=-a_{i_r,i_{r+1}}$. 
\end{Proposition}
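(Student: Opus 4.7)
The plan is to pin down the structure of the Garnir tableau $\T$ and of any $\Stab$ satisfying the hypotheses, then reduce the claimed degree difference to a codegree difference supported entirely inside the Garnir belt, and finally compute this by a case analysis on $e$.

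First I would analyze the Garnir tableau itself. From the definition of the $(a,b,n)$-Garnir tableau as the Bruhat-maximum standard filling of the belt agreeing with $\T^\mu$ outside the belt, a direct inspection shows that the first $b-1$ belt entries (in row order) fill $(a+1,1),\dots,(a+1,b-1)$, that $r$ then sits at $(a,b,n)$, that $r+1$ is forced to sit at $(a+1,b,n)$ by the column-$b$ standardness condition, and that the remaining belt entries fill row $a$ to the right of column $b$. Consequently $i_r = k_n + b - a$ and $i_{r+1} = i_r - 1$, so $a_{i_r,i_{r+1}}$ equals $-1$ when $e\ge 3$ or $e=0$, equals $-2$ when $e=2$, and equals $2$ when $e=1$; the target value $-a_{i_r,i_{r+1}}$ is thus $1$, $2$, or $-2$ in the three cases.

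Second, Lemma~\ref{LAgrees} gives that $\Stab$ agrees with $\T^\mu$ outside the Garnir belt, so the intermediate shapes $\sh(\T_{\leq s})$ and $\sh(\Stab_{\leq s})$ coincide for $s$ at or below the start of the belt and for $s$ at or above the end of the belt. Since $\sh(\Stab) = \sh(\T) = \mu$, Lemma~\ref{LDPrime} reduces the proposition to the equivalent statement $\codeg(\T) - \codeg(\Stab) = -a_{i_r,i_{r+1}}$. Expanding the recursive definition of $\codeg$ as a sum $\sum_{s=1}^d d^{A_s^\T}(\sh(\T_{\leq s-1}))$ over the entries and comparing termwise with the analogous sum for $\Stab$, the summands indexed by $s$ outside the belt cancel. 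Since $d^{A_s}$ for a belt node $A_s$ counts addable and removable $\res(A_s)$-nodes strictly above $A_s$, and since every row of component $n$ above row $a$, together with every earlier component, coincides in $\T$ and $\Stab$, the remaining combinatorics localize to rows $a$ and $a+1$ of the $n$th component.

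The final step is a local case analysis on $e$. The constraint $\bi^\Stab = \bi^{s_r\T}$ forces the swap of residues $i_r$ and $i_{r+1}$ between the entries $s=r$ and $s=r+1$, and one must trace how this swap propagates through the $d^{A_s}$ contributions of the other belt entries placed in row $a$ or $a+1$. For $e \ge 3$ or $e = 0$ the belt residues are distinct, $\Stab$ is essentially determined by $\T$, and the local count yields $+1$. For $e = 2$ several admissible $\Stab$'s may exist, but each contributes $+2$ because the repeated residues along the belt create an additional addable/removable pair of $i_r$-nodes in the vicinity of column $b$. For $e = 1$ the two residues coincide entirely and the count gives $-2$. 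I expect the main obstacle to be the $e = 2$ case: one must verify that every admissible $\Stab$ produces the same codegree difference, which requires a combinatorial cancellation argument showing that the telescoping row-$(a+1)$ contributions exactly balance the column-$b$ residue interchange.
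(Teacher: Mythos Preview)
Your overall strategy matches the paper's: reduce to codegree via Lemma~\ref{LDPrime} and localize to the Garnir belt via Lemma~\ref{LAgrees}. The gap is in the final step.

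Your claim that for $e\geq 3$ (or $e=0$) the belt residues are distinct and $\Stab$ is essentially determined is false. The belt can have arbitrarily many nodes, so for any finite $e$ the residues along the belt repeat, and there can be several standard $\Stab$ with $\bi^\Stab=\bi^{s_r\T}$. For instance, with $e=3$, $\mu=(10,4)$ and the $(1,4)$-Garnir tableau, there are already two such $\Stab$. So the difficulty you flag only for $e=2$ --- showing that \emph{every} admissible $\Stab$ yields the same codegree difference --- is present for every $e\geq 2$, and your proposed ``local count yields $+1$'' does not go through as stated. (Also, $e=1$ is excluded by the standing hypothesis $\xi\neq 1$, so that case should be dropped.)

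The missing idea is the brick decomposition. After the reduction to a two-row shape with $(2,b)$ the last node, the residue constraint $\bi^\Stab=\bi^{s_r\T}$ forces the belt entries of $\Stab$ to break into maximal horizontal $\Stab$-bricks whose sizes are all divisible by $e$, except possibly one brick of non-divisible size in each of the two rows. Lemma~\ref{LVB} then says that removing a brick of size $e$, or a brick lying in the first row of the first component, does not change $\codeg$. This gives $\codeg(\Stab)$ uniformly (it depends only on $b\pmod e$, taking the value $-1$ or $0$), and a direct computation of $\codeg(\T)$ (depending on $e=2$ versus $e>2$ and on $b\pmod e$) yields $\codeg(\T)-\codeg(\Stab)=-a_{i_r,i_{r+1}}$ in all cases.
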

\begin{proof}
In view of Lemma~\ref{LAgrees}, %and Proposition~\ref{PDeg}, 
we may assume that $\mu$ is just a two-row partition and that $(a+1,b,n)$ is the last node of this partition, i.e. the node labelled with $d$ in the usual canonical labelling (and of course now $a=1$). 
By Lemma~\ref{LDPrime} it suffices to prove that $\codeg(\T)-\codeg(\Stab)=-a_{i_r,i_{r+1}}$.

Let us split the $(a,b,n)$-Garnir belt of $\mu$ into the disjoint union of maximal    $\Stab$-bricks. It follows from the assumption $\bi^\Stab=\bi^{s_r  \T}$ that we get bricks of sizes divisible by $p$ together with at most one brick of size not divisible by $p$ in each row of the belt. Using Lemma~\ref{LVB}, we see that 
$$
\codeg(\Stab)=
\left\{
\begin{array}{ll}
-1 &\hbox{if $b\not\equiv 0\pmod{p}$;}\\
0 &\hbox{if $b\equiv 0\pmod{p}$.}
\end{array}
\right.
$$

Now,
$$
\codeg(\T)=
\left\{
\begin{array}{ll}
0 &\hbox{if $p>2$ and  $b\not\equiv 0\pmod{p}$;}\\
1 &\hbox{if $p>2$ and $b\equiv 0\pmod{p}$,}\\
     &\hbox{ or $p=2$ and $b\not\equiv 0\!\!\!\!\pmod{2}$;}
\\
2 &\hbox{if $p=2$ and $b\equiv 0\pmod{2}$.}
\end{array}
\right.
$$
The result follows. 
\end{proof}

\section{Specht modules}

Throughout the section $\mu$ is a fixed $l$-multipartition of $d$. 
Our goal is to explicitly grade the Specht module $S(\mu)$ associated to $\mu$
as a module over the $\Z$-graded algebra $H_d^\La$.  To do this 
we construct a new basis (depending on the 
fixed choice of preferred reduced decompositions
for the elements $w \in \Sigma_d$) 
which will turn out to be homogeneous for the grading. 
%Note our basis depends on $e$, so is
%{\em different} from the various existing bases for Specht modules
%(Kazhdan-Lusztig, Murphy, \dots) in the literature.

\subsection{Specht modules and the standard basis theorem}
We begin by reviewing the 
Dipper-James-Mathas theory of Specht modules for $H_d^\La$, cf. 
\cite{DJM, JM}. In \cite{DJM}, explicit elements $m_{\Stab,\T}$ are 
defined so that the set
\begin{equation}\label{cellbase}
\{m_{\Stab,\T}\mid \Stab,\T\in \St(\mu)\ \text{for some $\mu\in\Par_d$}\}
\end{equation}
is a cellular basis of $H_d^\La$ in the sense of Graham-Lehrer \cite{GL}. 
Set 
$$
K(\mu)=\spa(m_{\Stab,\T}\mid \Stab,\T\in \St(\nu)\ \text{for some $\nu\in\Par_d$ with $\nu\rhd \mu$}). 
$$
By \cite[Proposition 3.22]{DJM}, $K(\mu)$ is an ideal in $H_d^\La$. 
Denote 
$$
z_\mu:=m_{\T^\mu,\T^\mu}+K(\mu)\in H_d^\La/K(\mu).
$$
The {\em Specht module}
$S(\mu)$ is the submodule of $H_d^\La/K(\mu)$ 
generated by the element $z_\mu$:
$$
S(\mu) := H_d^\La z_\mu\qquad(\mu\in\Par_d). 
$$
As we noted in the introduction, by Specht module here we actually
mean the
{\em dual} 
of the module referred to as Specht module in much of the older
literature, especially \cite{Jbook}.
For any $\mu$-tableau $\T$ define 
$$
z_\T:=T_{w_\T}z_\mu.
$$
For example $z_{\T^\mu}=z_\mu$. 
By definition of $m_{\Stab,\T}$ (i.e. the 
left-handed version of \cite[Definition 3.14]{DJM}), 
we also have $z_\T=m_{\T,\T^\mu}+K(\mu)$. We 
now list the main properties of the elements $z_\T$ for future reference. 

\begin{Lemma}\label{LZ1}%{\rm \cite{}}%{\bf ()}
Let $\mu\in \Par_d$, and $\bi^\mu=(i_1,\dots,i_d)$. Then $X_rz_\mu=\xi^{i_r}z_\mu$ for all $r=1,\dots,d$. In particular, $z_\mu\in e(\bi^\mu)S(\mu)$ 
and $y_r z_\mu=0$ for all $r=1,\dots,d$.
\end{Lemma}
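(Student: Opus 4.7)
The three assertions reduce to the first: $X_r z_\mu = \xi^{i_r} z_\mu$ for all $r=1,\dots,d$. Granted this, $z_\mu$ is a joint eigenvector for $(X_1,\dots,X_d)$ with eigenvalues $(\xi^{i_1},\dots,\xi^{i_d})$, so the weight-space decomposition (\ref{EWS}) forces $z_\mu \in e(\bi^\mu) S(\mu)$. Then, using (\ref{QEPolKL}),
\[
y_r z_\mu = (1 - \xi^{-i_r} X_r) e(\bi^\mu) z_\mu = (1 - \xi^{-i_r}\xi^{i_r}) z_\mu = 0.
\]

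For the central identity $X_r z_\mu = \xi^{i_r} z_\mu$, my plan is to appeal to the standard Dipper--James--Mathas fact (essentially \cite[Proposition 3.7]{JM}, or its analogue in \cite{DJM}) that $X_r m_{\T^\mu,\T^\mu} - \xi^{i_r} m_{\T^\mu,\T^\mu} \in K(\mu)$, i.e.\ each Jucys--Murphy element acts on the initial element of the Specht module as the residue scalar at the node occupied by $r$ in $\T^\mu$. A self-contained proof would proceed by induction on $r$, using the explicit factorisation $m_{\T^\mu,\T^\mu} = x_\mu u_\mu^+$ from \cite[Definition 3.14]{DJM}, where $x_\mu$ is the row-symmetriser of $\T^\mu$ and $u_\mu^+ = \prod_{m=2}^l \prod_{j=1}^{a_m}(X_j - \xi^{k_m})$ with $a_m = \sum_{n<m}|\mu^{(n)}|$. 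The base case $r=1$ falls out from the cyclotomic relation (\ref{ECHA}) combined with the fact that factors of $u_\mu^+$ in variables $X_j$ with $j\geq 2$ commute with $X_1$, so that the ``missing'' linear factors at position $1$ multiply into the cyclotomic product. The inductive step propagates the eigenvalue from $X_r$ to $X_{r+1}$ via the conjugation relation $T_r X_r T_r = \xi X_{r+1}$ of (\ref{QDAHA}), splitting into cases according to whether $s_r$ stabilises $\T^\mu$ (in which case $T_r$ acts on $z_\mu$ as the scalar $\xi$ coming from the quadratic relation and the row-symmetriser) or not.

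The main obstacle in the self-contained approach is the inductive step when $r+1$ begins a new row or a new component of $\T^\mu$: there $s_r$ does not stabilise $\T^\mu$, so $T_r z_\mu$ is not a scalar multiple of $z_\mu$, and one must show that every correction term produced by commuting $T_r$ past $u_\mu^+$ and $x_\mu$ either reduces to a scalar multiple of $z_\mu$ (via the already-established cases) or lies in the dominance ideal $K(\mu)$ (using the cellular structure and Bruhat estimates of the type recorded in Lemma~\ref{LTBruhat}). Since this bookkeeping is exactly what is carried out in \cite{JM, DJM}, the cleanest route is simply to cite that result.
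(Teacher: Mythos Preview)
Your proof is correct and follows exactly the paper's approach: cite \cite[Proposition~3.7]{JM} for the identity $X_r z_\mu = \xi^{i_r} z_\mu$, and then deduce the remaining assertions from (\ref{EWS}) and (\ref{QEPolKL}). The additional sketch of a self-contained argument is unnecessary for the lemma but harmless.
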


\begin{proof}
The first statement follows from 
\cite[Proposition 3.7]{JM}.
%\cite[Proposition 3.7]{DJ} and \cite[Exercise 3.6 on p.49]{MathasB}. 
The second statement then comes from (\ref{EWS}) and (\ref{QEPolKL}).
\end{proof}

\iffalse{
The following is also clear from the properties of the element $y_{\mu'}$. 

\begin{Lemma}\label{LZ2}%{\rm \cite{}}%{\bf ()}
Let $\mu$ be a partition of $d$ and $w\in \Sigma_{\mu'}$. Then $T_w z_\mu=(-1)^{\ell(w)}z_\mu$.  
\end{Lemma}
}\fi

The next result comes from \cite[Theorem 3.26]{DJM}:

\begin{Theorem}\label{TSB}%{\rm \cite{}}%
{\bf (Standard Basis Theorem)}
Let $\mu\in\Par_d$. Then 
$$
\{z_\T\mid \T\in \St(\mu)\}
$$
is a basis of $S(\mu) $ (referred to as the standard basis). 
\end{Theorem}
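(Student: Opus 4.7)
The plan is to derive this directly from the cellular basis theorem of Dipper--James--Mathas \cite{DJM}, of which (\ref{cellbase}) is essentially a restatement. The argument splits into three steps: identifying each $z_\T$ with a cellular basis element modulo $K(\mu)$, deducing linear independence from that identification, and invoking the multiplication axiom of a cellular algebra to establish spanning.

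First I would verify the identification $z_\T = m_{\T,\T^\mu} + K(\mu)$, which is in fact remarked on just before the theorem statement. In the left-handed convention of \cite[Definition 3.14]{DJM} one has $m_{\T,\T^\mu} = T_{w_\T}\, m_{\T^\mu,\T^\mu}$, so
\[
z_\T = T_{w_\T}\,z_\mu = T_{w_\T}(m_{\T^\mu,\T^\mu} + K(\mu)) = m_{\T,\T^\mu} + K(\mu).
\]
For linear independence, the cellular basis theorem gives that the images $\{m_{\Stab,\T} + K(\mu)\mid \Stab,\T\in\St(\mu)\}$ form a basis of $H_d^\La/K(\mu)$. Restricting to the subset with $\T = \T^\mu$ yields that $\{z_\Stab\mid\Stab\in\St(\mu)\}$ is linearly independent in $H_d^\La/K(\mu)$, hence also in its submodule $S(\mu)$.

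For spanning, the multiplicative cellularity axiom asserts that for any $h\in H_d^\La$ one has $h\cdot m_{\Stab,\T}\equiv \sum_{\Stab'\in\St(\mu)} r_h(\Stab,\Stab')\, m_{\Stab',\T}\pmod{K(\mu)}$, with coefficients $r_h(\Stab,\Stab')$ independent of the second index $\T$. Specializing $\Stab = \T = \T^\mu$ shows that $hz_\mu$ lies in the span of $\{m_{\Stab',\T^\mu} + K(\mu)\} = \{z_{\Stab'}\}$, so $S(\mu) = H_d^\La z_\mu$ is spanned by the $z_\T$. Combined with the cardinality $|\St(\mu)|$ obtained from the previous step, this both spans $S(\mu)$ and ensures the elements remain independent.

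The main obstacle in a self-contained argument that avoids appealing to \cite{DJM} would be proving the cellular triangularity of the $m_{\Stab,\T}$ in the first place. This requires cyclotomic Garnir relations together with a careful straightening algorithm rewriting $T_w z_\mu$, for $w\in\Sigma_d$ such that $w\T^\mu$ is non-standard, as a linear combination of $z_\Stab$ with $\Stab\in\St(\mu)$ modulo terms supported on shapes strictly dominating $\mu$. The absorption step $X_r z_\mu = \xi^{i_r} z_\mu$ (Lemma~\ref{LZ1}) combined with the standard $\{T_w X_1^{a_1}\cdots X_d^{a_d}\}$ basis immediately gives that $\{T_w z_\mu\mid w\in\Sigma_d\}$ spans $S(\mu)$; converting this spanning set to the indexed set $\{z_\T\mid\T\in\St(\mu)\}$ is precisely the substance of the Dipper--James--Mathas construction in \cite[\S3]{DJM}. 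For the purposes of this paper, citing that result is the cleanest route.
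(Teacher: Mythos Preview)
Your approach matches the paper's: the paper simply cites \cite[Theorem~3.26]{DJM}, and you correctly spell out how the result follows from the Dipper--James--Mathas cellular structure. One small inaccuracy: the images $\{m_{\Stab,\T}+K(\mu)\mid \Stab,\T\in\St(\mu)\}$ are not a basis of all of $H_d^\La/K(\mu)$ (the quotient also contains images of cellular basis elements indexed by shapes $\nu\not\unrhd\mu$), but they are part of a basis, which is all you need for the linear independence step.
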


Finally, \cite[Lemma 3.15 and Proposition 3.18]{DJM} yield:

\begin{Proposition}\label{PStr1}%{\rm \cite{}}%{\bf ()}
Let $\mu\in\Par_d$ and $\Stab$ be a $\mu$-tableau. Then $z_\Stab$ can be written as a linear combination of the elements $z_\T$ such that $\T\in\St(\mu)$ and $\T\unlhd \Stab$. 
\end{Proposition}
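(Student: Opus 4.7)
The plan is to reduce the proposition to the standard straightening procedure for the Dipper--James--Mathas cellular basis and then invoke their results directly.

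First I would identify $z_\Stab$ with a cellular basis element modulo $K(\mu)$. Unpacking the left-handed version of \cite[Definition 3.14]{DJM}, one has $m_{\Stab,\T^\mu} = T_{w_\Stab} m_{\T^\mu,\T^\mu}$, so
$$
z_\Stab = T_{w_\Stab} z_\mu = T_{w_\Stab}\bigl(m_{\T^\mu,\T^\mu} + K(\mu)\bigr) = m_{\Stab,\T^\mu} + K(\mu).
$$
Thus the claim is equivalent to saying that in $H_d^\La / K(\mu)$, the element $m_{\Stab,\T^\mu}$ lies in the span of $\{m_{\T,\T^\mu} : \T \in \St(\mu),\ \T \unlhd \Stab\}$.

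Next I would invoke \cite[Lemma 3.15 and Proposition 3.18]{DJM}, which together perform exactly this straightening, by induction on $\Stab$ in the appropriate partial order. The procedure naturally splits into two stages. If $\Stab$ is not row-strict, then for some $r$ the transposition $s_r$ lies in the row stabilizer $\Sigma_\mu$ of $\T^\mu$, and the symmetrizer identity $T_r m_{\T^\mu,\T^\mu} = \xi\, m_{\T^\mu,\T^\mu}$ lets one replace $\Stab$ by the row-equivalent tableau $\Stab^+$, which is strictly smaller in the Bruhat order. If $\Stab$ is row-strict but not column-strict, the column-straightening is carried out via the Garnir relations in $H_d^\La$; each application of a Garnir relation rewrites $\Stab$, modulo $K(\mu)$, as a linear combination of tableaux that are strictly below $\Stab$ in dominance order. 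Iterating these two steps eventually produces a sum of standard tableaux below $\Stab$.

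The only real subtlety is reconciling the dominance order used inside \cite{DJM} with the Bruhat order $\unlhd$ (defined by $w_\Stab \leq w_\T$) in which our statement is phrased. But by Lemma~\ref{LMathas} the two orders agree on row-strict $\mu$-tableaux, and after the row-straightening stage all tableaux appearing are row-strict, so every intermediate (and hence every final standard) tableau that arises is Bruhat-below $\Stab$. This translation is the main thing to verify carefully; once it is in hand, the proposition follows by citing \cite{DJM}.
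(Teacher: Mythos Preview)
Your proposal is correct and follows exactly the approach the paper takes: the paper's entire proof is the one-line citation of \cite[Lemma 3.15 and Proposition 3.18]{DJM}, and you have simply unpacked what that citation contains. Your added discussion of reconciling the dominance and Bruhat orders via Lemma~\ref{LMathas} is a reasonable clarification that the paper leaves implicit.
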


\subsection{A homogeneous basis}
Let $\mu\in\Par_d$.
Recall the elements $\psi_w\in H_d^\La$ from section~\ref{SPsiW}. 
For any $\mu$-tableau $\T$ define the vector
\begin{equation}\label{vT}
v_\T:=\psi_{w_\T}z_\mu\in S(\mu) .
\end{equation}
For example, we have $v_{\T^\mu}=z_\mu$. 
Just like the elements $\psi_{w}$, the vectors $v_\T$ in general depend  on the choice of reduced decompositions in $\Sigma_d$. 
%, but see Theorem~\ref{TMain}(i). 

\begin{Lemma}\label{LVWeight}%{\rm \cite{}}%{\bf ()}
Let $\mu\in\Par_d$ and $\T$ be a $\mu$-tableau. Then $v_\T$ is an element of the weight space $S(\mu)_{\bi^\T}=e(\bi^\T)S(\mu) $.  
\end{Lemma}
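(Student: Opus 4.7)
The plan is to show directly that $e(\bi^\T) v_\T = v_\T$ and that $e(\bj) v_\T = 0$ for every $\bj \neq \bi^\T$; together with decomposition into weight spaces via the idempotents $e(\bi)$, this gives $v_\T \in e(\bi^\T) S(\mu) = S(\mu)_{\bi^\T}$.

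First, I would establish the key combinatorial identity $\bi^\T = w_\T \cdot \bi^\mu$, where $\Sigma_d$ acts on $I^d$ by place permutations $(w \cdot \bi)_r = \bi_{w^{-1}(r)}$. This is immediate from unwinding the definitions: the entry $r$ in $\T$ occupies the node that contained $w_\T^{-1}(r)$ in $\T^\mu$, so the residue of that node is $\bi^\mu_{w_\T^{-1}(r)}$, which is exactly the $r$th component of $w_\T \cdot \bi^\mu$.

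Next, fix a preferred reduced decomposition $w_\T = s_{r_1} \cdots s_{r_m}$, so that $\psi_{w_\T} = \psi_{r_1} \cdots \psi_{r_m}$. Applying the commutation relation $\psi_r e(\bi) = e(s_r \bi) \psi_r$ from (\ref{R2PsiE}) repeatedly, I push $e(\bi^\mu)$ from the right-hand end of $\psi_{w_\T}$ to the left-hand end, obtaining
\[
\psi_{w_\T} e(\bi^\mu) = e(w_\T \bi^\mu) \psi_{w_\T} = e(\bi^\T) \psi_{w_\T}.
\]

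Finally, Lemma \ref{LZ1} gives $z_\mu = e(\bi^\mu) z_\mu$, so
\[
v_\T = \psi_{w_\T} z_\mu = \psi_{w_\T} e(\bi^\mu) z_\mu = e(\bi^\T) \psi_{w_\T} z_\mu = e(\bi^\T) v_\T.
\]
Using the orthogonality $e(\bj) e(\bi^\T) = \delta_{\bj, \bi^\T} e(\bi^\T)$ from (\ref{R1}), this yields $e(\bj) v_\T = \delta_{\bj, \bi^\T} v_\T$, proving the claim. No real obstacle is expected here; the only mildly delicate point is matching the convention for the place-permutation action with the convention $w_\T \T^\mu = \T$, and this is purely bookkeeping.
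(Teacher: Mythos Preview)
Your proof is correct and follows exactly the approach the paper takes: the paper's proof is the one-line remark that the result ``follows from the second relation in (\ref{R2PsiE}) and Lemma~\ref{LZ1},'' and you have simply unpacked that line in full detail, including the combinatorial identity $\bi^\T = w_\T\cdot\bi^\mu$ that makes the commutation work.
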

\begin{proof}
This follows from the second relation in (\ref{R2PsiE}) and Lemma~\ref{LZ1}. 
\end{proof}

The key 
fact connecting the elements $v_\T$ to the standard basis of $S(\mu)$ is
as follows:

\begin{Proposition}\label{PVZ}%{\rm \cite{}}%{\bf ()}
Let $\mu\in\Par_d$ and $\T$ be a $\mu$-tableau. Then 
$$v_\T=\sum_{\Stab\in\St(\mu),\ \Stab\unlhd \T}a_{\Stab}z_\Stab\qquad(a_\Stab\in F).$$ Moreover, if $\T$ is standard, then $a_\T\neq 0$. 
\end{Proposition}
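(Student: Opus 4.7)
The plan is to proceed by induction on $m := \ell(w_\T)$. The base case $m = 0$ is immediate since then $\T = \T^\mu$ and $v_\T = z_\mu = z_{\T^\mu}$, with coefficient $1\neq 0$.

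For the inductive step, fix the preferred reduced decomposition $w_\T = s_{a_1}\cdots s_{a_m}$, set $w' := s_{a_2}\cdots s_{a_m}$, and let $\T' := w' \T^\mu$. Since $s_{a_2}\cdots s_{a_m}$ is reduced but not necessarily the preferred decomposition of $w'$, Proposition~\ref{PSubtle}(i) gives
\[
\psi_{a_2}\cdots \psi_{a_m} e(\bi^\mu) = \psi_{w'} e(\bi^\mu) + \sum_{u < w'} \psi_u f_u(y) e(\bi^\mu).
\]
Applying this to $z_\mu$ and using $y_s z_\mu = 0$ from Lemma~\ref{LZ1} to collapse each $f_u(y)$ to its constant term yields
\[
\psi_{a_2}\cdots \psi_{a_m} z_\mu = v_{\T'} + \sum_{u<w'} c_u\, v_{\Stab_u}, \qquad \Stab_u := u\T^\mu,\; c_u \in F.
\]
By the inductive hypothesis (first assertion) applied to $\T'$ and to each $\Stab_u$, both of which have length strictly less than $m$, the right-hand side lies in $\spa\{z_\Stab : \Stab \in \St(\mu),\ \Stab \unlhd \T'\}$.

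Now I apply $\psi_{a_1}$. For any standard $\Stab$, the formula $\psi_{a_1} e(\bi^\Stab) = (T_{a_1} + P_{a_1}(\bi^\Stab))Q_{a_1}(\bi^\Stab)^{-1} e(\bi^\Stab)$ combined with the classical Jucys--Murphy upper-triangular action $(X_s - \xi^{i_s^\Stab}) z_\Stab \in \spa\{z_{\Stab''} : \Stab'' \lhd \Stab\}$ (see \cite{JM}) shows that any positive-degree monomial in the $y$'s applied to $z_\Stab$ lies in $\spa\{z_{\Stab''} : \Stab'' \lhd \Stab\}$. Together with the nonvanishing constant term from Lemma~\ref{LConstantTerm}, this gives
\[
\psi_{a_1} z_\Stab = q_{a_1}(\bi^\Stab)^{-1}\, T_{a_1} z_\Stab + (\text{strictly lower Bruhat corrections}),
\]
where $q_{a_1}(\bi^\Stab)\ne 0$. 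The Bruhat lifting property, applied to $w_\Stab \leq w'$ and $\ell(s_{a_1} w') > \ell(w')$, ensures that every tableau produced has Bruhat length $\leq \ell(w_\T)$. Re-expanding any non-standard $z_{s_{a_1}\Stab}$ via Proposition~\ref{PStr1} preserves this bound, so the first assertion follows.

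For the leading coefficient, assume $\T$ is standard. I first verify the combinatorial claim that $\T' = s_{a_1}\T$ is also standard: since $\ell(s_{a_1} w_\T) < \ell(w_\T)$, entry $a_1$ lies at a strictly later canonical position than $a_1+1$ in $\T$, so they share neither row nor column of any single component, and swapping them preserves both row- and column-strictness. By induction, $v_{\T'}$ contains $z_{\T'}$ with nonzero coefficient $a_{\T'}$. Since $\ell(s_{a_1} w_{\T'}) > \ell(w_{\T'})$, the braid-relation identity $T_{a_1} T_{w_{\T'}} = T_{w_\T}$ gives $T_{a_1} z_{\T'} = z_\T$ exactly, so $\psi_{a_1} z_{\T'}$ contributes $a_{\T'}\, q_{a_1}(\bi^{\T'})^{-1}\, z_\T$ to $v_\T$. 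Every other standard tableau appearing in the expansion of $\psi_{a_2}\cdots\psi_{a_m} z_\mu$ is strictly $\lhd \T'$; for such $\Stab$, Bruhat lifting forces $w_{s_{a_1}\Stab} < w_\T$ strictly, and re-expansion via Proposition~\ref{PStr1} cannot lift the bound, so these terms cannot contribute to $z_\T$. Hence $a_\T = a_{\T'}\cdot q_{a_1}(\bi^{\T'})^{-1} \neq 0$.

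The main obstacle I anticipate is the leading-coefficient step, which rests on two delicate points: the combinatorial standardness of $s_{a_1}\T$ (justified by a case-check on the positions of the entries $a_1$ and $a_1+1$), and the cancellation-avoidance that uses the Bruhat lifting property together with Proposition~\ref{PStr1} to rule out any spurious contributions to $z_\T$ from strictly lower tableaux after the action of $\psi_{a_1}$ and re-expansion of non-standard intermediates.
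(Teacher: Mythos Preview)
Your argument is essentially correct, but it takes a substantially longer and more delicate route than the paper, and there is one imprecision worth flagging.

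The paper does not induct. It simply expands every $\psi_{r_k}e(\bi)$ as $(T_{r_k}+P_{r_k}(\bi))Q_{r_k}(\bi)^{-1}e(\bi)$ and uses Lemma~\ref{LYRight} to push all occurrences of $y$'s past the $T$'s to the far right. This expresses $v_\T$ as a linear combination of terms $T_{r_{a_1}}\cdots T_{r_{a_b}}f(y)z_\mu$ with $1\le a_1<\dots<a_b\le m$. Now Lemma~\ref{LZ1} gives $y_rz_\mu=0$, so only the constant term of each $f(y)$ survives; by Lemma~\ref{LConstantTerm} the full product $T_{r_1}\cdots T_{r_m}z_\mu=z_\T$ occurs with nonzero coefficient when $\T$ is standard. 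Lemma~\ref{LTBruhat} and Proposition~\ref{PStr1} then give the Bruhat bound. No induction, no lifting property, no analysis of how $\psi_r$ acts on an arbitrary $z_\Stab$.

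The price you pay for the inductive route is that you must understand $\psi_{a_1}z_\Stab$ for a general standard $\Stab$, and for this you invoke the upper-triangular action of the $X_r$'s on the whole standard basis. That fact is true (and is in \cite{JM}), but the paper never needs it: the direct expansion only ever meets $y_rz_\mu$, which is exactly zero. Here is the related imprecision in your write-up: your displayed formula for $\psi_{a_1}z_\Stab$ is phrased as though $e(\bi^\Stab)z_\Stab=z_\Stab$, but $z_\Stab$ is not a weight vector. To make the step rigorous you must note that the idempotents $e(\bi)$, and hence $P_{a_1}(\bi)$ and $Q_{a_1}(\bi)^{-1}$, all lie in $F[X_1,\dots,X_d]$ and therefore act upper-triangularly on the $z_\Stab$'s; then the ``strictly lower Bruhat corrections'' should read ``a combination of $z_{\Stab''}$ with $\Stab''\unlhd\Stab$ together with $T_{a_1}$ applied to terms strictly $\lhd\Stab$''. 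With that adjustment your Bruhat-lifting argument (using $w_\Stab\le w'$ and $\ell(s_{a_1}w')>\ell(w')$) and your leading-coefficient analysis go through.
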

\begin{proof}
By definition, we have
$
v_\T=\psi_{r_1}\dots\psi_{r_m}z_\mu
$ 
for some reduced decomposition $w_\T=s_{r_1}\dots s_{r_m}$. 
From (\ref{QQCoxKL}), (\ref{QEP}), (\ref{QEQ}) and  Lemma~\ref{LYRight}, 
%and Lemma~\ref{LSubtle}, 
we get that  
$
v_\T
$
is a linear combination of terms of the form $T_{r_{a_1}}\dots T_{r_{a_b}}f(y)z_\mu$, such that
$1\leq a_1<\dots<a_b\leq m$. Using Lemmas~\ref{LZ1} and \ref{LConstantTerm}, we conclude that $v_\T$ is a linear combination of terms of the form  $T_{r_{a_1}}\dots T_{r_{a_b}}z_\mu$, and the coefficient of 
$
T_{r_1}\dots T_{r_m}z_\mu=T_{w_\T}z_\mu=z_\T
$ 
is non-zero if $\T$ is standard. 
Moreover, by Lemma~\ref{LTBruhat}, any $T_{r_{a_1}}\dots T_{r_{a_b}}$ is a linear combination of the elements of the form $T_u$ for $u< w_\T$. 
Now the result follows from Proposition~\ref{PStr1}.
\end{proof}

\begin{Corollary}\label{CBasis}%{\rm \cite{}}%{\bf ()}
Let $\mu\in\Par_d$. Then $\{v_\T\mid \T\in \St(\mu)\}$ is a basis of  $S(\mu) $. Moreover, for any $\mu$-tableau $\Stab$, we have
$$
v_\Stab=\sum_{\T\in \St(\mu),\ \T\unlhd \Stab}a_\T v_\T\qquad(a_\T\in F).
$$
\end{Corollary}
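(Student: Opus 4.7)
The plan is to derive both statements directly from Proposition~\ref{PVZ} together with the Standard Basis Theorem (Theorem~\ref{TSB}), via a unitriangularity argument with respect to the Bruhat order on $\St(\mu)$.

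First, I would restrict Proposition~\ref{PVZ} to standard tableaux. For each $\T \in \St(\mu)$ it gives
$$v_\T = a_\T z_\T + \sum_{\Stab\in\St(\mu),\ \Stab\lhd\T} a_\Stab z_\Stab$$
with $a_\T \neq 0$. Ordering $\St(\mu)$ by any linear refinement of $\unlhd$, the transition matrix from $(v_\T)_{\T\in\St(\mu)}$ to the standard basis $(z_\T)_{\T\in\St(\mu)}$ is triangular with nonzero diagonal, hence invertible. By Theorem~\ref{TSB}, $(z_\T)_{\T\in\St(\mu)}$ is a basis of $S(\mu)$, so $(v_\T)_{\T\in\St(\mu)}$ is a basis as well. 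This gives the first assertion.

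For the second assertion, let $\Stab$ be an arbitrary $\mu$-tableau. Applying Proposition~\ref{PVZ} once more, we may write
$$v_\Stab = \sum_{\T\in\St(\mu),\ \T\unlhd\Stab} c_\T z_\T$$
for some scalars $c_\T \in F$. Now I invert the change of basis from the previous paragraph. Since the matrix expressing $(v_\T)_{\T\in\St(\mu)}$ in terms of $(z_\T)_{\T\in\St(\mu)}$ is unitriangular (up to nonzero scalars on the diagonal) with respect to $\unlhd$, its inverse is triangular of the same shape, so for each $\T \in \St(\mu)$ one has
$$z_\T = \sum_{\T'\in\St(\mu),\ \T'\unlhd\T} b_{\T'} v_{\T'}\qquad (b_{\T'} \in F).$$
Substituting into the expression for $v_\Stab$ and using the transitivity of $\unlhd$ collapses the double sum into a sum over $\T' \in \St(\mu)$ with $\T' \unlhd \Stab$, yielding the desired formula.

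There is no serious obstacle here; the only thing to be a bit careful about is that the first expansion uses Proposition~\ref{PVZ} for tableaux $\Stab$ that are \emph{not} necessarily standard, so the resulting coefficients $c_\T$ need not be governed by unitriangularity (indeed $\Stab$ may not be comparable to any particular standard tableau in an obvious way). This causes no harm, since the summation is already over standard $\T \unlhd \Stab$, which is exactly what is required to feed into the inversion step above.
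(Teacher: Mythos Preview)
Your proposal is correct and matches the paper's intended argument: the corollary is stated without proof immediately after Proposition~\ref{PVZ}, and the unitriangularity deduction you spell out (basis from the invertible triangular transition matrix, then invert and use transitivity of $\unlhd$ for the second claim) is exactly what is being left implicit. The one point worth making explicit in your write-up is why the inverse matrix is again $\unlhd$-triangular; this is standard, but the cleanest justification is the inductive one (express $z_\T$ in terms of $v_\T$ and the $z_\Stab$ with $\Stab\lhd\T$, then apply the induction hypothesis) rather than appealing to a linear refinement.
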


The next result allows to control the dependence of the vectors $v_\T$ on the choice of reduced decompositions. 

\begin{Proposition}\label{PVReduced}%{\rm \cite{}}%{\bf ()}
Let $\mu\in\Par_d$, $\T$ be a $\mu$-tableau, and  $w_\T=s_{t_1}\dots s_{t_m}$ be a reduced decomposition in $\Sigma_d$. Then 
$$
\psi_{t_1}\dots\psi_{t_m}z_\mu=v_\T+\sum_{\Stab\in\St(\mu),\ \Stab\lhd \T} a_\Stab v_\Stab\qquad(a_\Stab\in F). 
$$
\end{Proposition}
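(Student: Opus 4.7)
The plan is to reduce the statement to Proposition~\ref{PSubtle}(i) by exploiting the fact that $z_\mu = e(\bi^\mu) z_\mu$ and that every $y_r$ annihilates $z_\mu$ by Lemma~\ref{LZ1}.

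First, I would apply Proposition~\ref{PSubtle}(i) to the two reduced decompositions of $w_\T$: the given one $w_\T = s_{t_1}\dots s_{t_m}$ and the preferred one $w_\T = s_{r_1}\dots s_{r_m}$ used to define $\psi_{w_\T}$. This yields
$$
\psi_{t_1}\dots\psi_{t_m}\, e(\bi^\mu) \;=\; \psi_{w_\T}\, e(\bi^\mu) \;+\; \sum_u \psi_u\, f_u(y)\, e(\bi^\mu),
$$
where the sum runs over finitely many $u\in\Sigma_d$ with $u< w_\T$, and each $f_u(y)$ is a polynomial in $y_1,\dots,y_d$.

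Next, I would right-multiply by $z_\mu$. By Lemma~\ref{LZ1} we have $e(\bi^\mu)z_\mu = z_\mu$ and $y_r z_\mu = 0$ for every $r$. Since the $y$'s commute (relation (\ref{R3Y})), any polynomial $f(y) \in F[y_1,\dots,y_d]$ acts on $z_\mu$ through its constant term only: $f(y)z_\mu = f(0)z_\mu$. Therefore
$$
\psi_{t_1}\dots\psi_{t_m}\, z_\mu \;=\; \psi_{w_\T}\, z_\mu \;+\; \sum_u c_u\, \psi_u\, z_\mu, \qquad c_u := f_u(0) \in F.
$$
The first summand on the right is $v_\T$ by definition. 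For each remaining term, setting $\Stab_u := u\cdot\T^\mu$ gives a (not necessarily standard) $\mu$-tableau with $w_{\Stab_u} = u$, so $\psi_u z_\mu = v_{\Stab_u}$, and $u < w_\T$ translates to $\Stab_u \lhd \T$.

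Finally, to pass to a sum indexed only by standard tableaux, I would apply Corollary~\ref{CBasis} to each $v_{\Stab_u}$: it expands as a linear combination of $v_{\T'}$ with $\T' \in \St(\mu)$ and $\T' \unlhd \Stab_u$. Transitivity of the Bruhat order then gives $\T' \lhd \T$, yielding the required expression $v_\T + \sum_{\Stab\in\St(\mu),\;\Stab\lhd\T} a_\Stab v_\Stab$. The only delicate step is the invocation of Proposition~\ref{PSubtle}(i), which is already established; the rest is bookkeeping whose essential ingredient is the fact that the correction terms produced there are of the specific form $\psi_u f(y) e(\bi^\mu)$ with $u$ strictly below $w_\T$, so that both the ``polynomial noise'' can be absorbed using $y_r z_\mu = 0$ and the Bruhat strictness is preserved when translated to tableaux.
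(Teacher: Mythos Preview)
Your proof is correct and follows exactly the route indicated by the paper, which simply writes ``This follows from Proposition~\ref{PSubtle}(i) and Corollary~\ref{CBasis}.'' You have spelled out the details faithfully: apply Proposition~\ref{PSubtle}(i) with $\bi=\bi^\mu$ and the preferred reduced decomposition as the second one, kill the polynomial factors using $y_r z_\mu=0$ from Lemma~\ref{LZ1}, and then straighten the remaining $v_{\Stab_u}$ for non-standard $\Stab_u$ via Corollary~\ref{CBasis}.
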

\begin{proof}
This follows from Proposition~\ref{PSubtle}(i) and Corollary~\ref{CBasis}.  
\end{proof}

\begin{Lemma}\label{LVK}%{\rm \cite{}}%{\bf ()}
Let $\mu\in\Par_d$, $\T$ be a $\mu$-tableau, and $1\leq r\leq d$. Then 
%$y_rv_\T$ can be written as 
$$
y_rv_\T =\sum_{\Stab\in\St(\mu),\ \Stab\lhd \T} a_\Stab v_\Stab \qquad(a_\Stab \in F).
$$ 
\end{Lemma}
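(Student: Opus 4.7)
The plan is to push $y_r$ from the left through the product $v_\T = \psi_{r_1}\cdots \psi_{r_m} z_\mu$ (computed using a preferred reduced decomposition $w_\T = s_{r_1}\cdots s_{r_m}$) until it reaches $z_\mu$, where it will be killed by Lemma~\ref{LZ1}. The only homogeneous relations needed are (\ref{R3YPsi}), which lets $y$ commute past distant $\psi$-generators, and (\ref{R5})--(\ref{R6}), each of which has the shape
$$y_t\,\psi_s\, e(\bi) \;=\; \psi_s\, y_{t'}\, e(\bi) \;+\; \epsilon\, e(\bi), \qquad \epsilon \in \{-1, 0, 1\},$$
according to whether the two adjacent residues in $\bi$ coincide.

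After pushing $y_r$ all the way to the right, I expect to obtain a main term $\psi_{r_1}\cdots\psi_{r_m} y_{r^*} z_\mu$, which vanishes by Lemma~\ref{LZ1}, plus one correction term for each position $j$ at which a $\pm 1$ was produced. Each such correction has the shape $c_j\,\psi_{r_1}\cdots\widehat{\psi}_{r_j}\cdots\psi_{r_m} e(\bk_j) z_\mu$: a product of only $m-1$ generators $\psi_s$ applied (via some idempotent) to $z_\mu$. The word $s_{r_1}\cdots \widehat{s}_{r_j}\cdots s_{r_m}$ represents some element $u_j \in \Sigma_d$ with $\ell(u_j)\le m-1 < \ell(w_\T)$, so by the subword characterization of the Bruhat order $u_j < w_\T$.

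Next I would invoke Proposition~\ref{PSubtle} (part (i) directly if the shorter word is reduced, otherwise part (ii) followed by part (i)) to rewrite each correction as a linear combination of $\psi_u\,f(y)\,e(\bk_j)$ with $u \le u_j < w_\T$. Applying Lemma~\ref{LZ1} once more, $e(\bk_j)z_\mu$ is non-zero only when $\bk_j = \bi^\mu$, and then $f(y) z_\mu = f(0) z_\mu$, so every surviving correction reduces to a scalar multiple of $\psi_u z_\mu = v_{u\T^\mu}$, where $u\T^\mu$ is the (possibly non-standard) $\mu$-tableau with $w_{u\T^\mu} = u < w_\T$, i.e.\ $u\T^\mu \lhd \T$. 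Expanding each $v_{u\T^\mu}$ via Corollary~\ref{CBasis} in the standard basis then produces only $v_{\T'}$ with $\T' \in \St(\mu)$ and $\T' \unlhd u\T^\mu \lhd \T$, hence $\T' \lhd \T$, which is the required form.

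The part I expect to be most delicate is the idempotent bookkeeping: as $y$ traverses the word, (\ref{R2PsiE}) shuffles the residue sequence on the right, and one must verify that the ``$\pm 1$'' appearing at each position $j$ is indeed a single scalar multiple of an idempotent $e(\bk_j)$, so that the resulting correction is a clean product of $m-1$ generators to which Proposition~\ref{PSubtle} can be applied.
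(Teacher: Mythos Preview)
Your proposal is correct and is essentially the paper's own argument spelled out in detail: the paper simply writes ``Follows from Lemma~\ref{LYRight}, Proposition~\ref{PSubtle}, and Corollary~\ref{CBasis},'' which packages exactly the three steps you describe (commute $y_r$ to the right, straighten the resulting subwords, then expand in the $v_\Stab$ basis). Your explicit step-by-step commutation is precisely the special case $f(y)=y_r$ of Lemma~\ref{LYRight}.

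One small simplification regarding your closing worry: the idempotent bookkeeping is cleaner than you fear. Since $z_\mu = e(\bi^\mu)z_\mu$ and $\psi_{r_{j+1}}\cdots\psi_{r_m}e(\bi^\mu) = e(\bj_j)\psi_{r_{j+1}}\cdots\psi_{r_m}$ with $\bj_j = s_{r_{j+1}}\cdots s_{r_m}\bi^\mu$, the error term $\pm e(\bj_j)$ produced at position $j$ sits to the left of $\psi_{r_{j+1}}\cdots\psi_{r_m}$ and can be pushed back to the right, where it becomes $e(\bi^\mu)$ again and is absorbed by $z_\mu$. So each correction is literally $\pm\psi_{r_1}\cdots\widehat{\psi_{r_j}}\cdots\psi_{r_m}z_\mu$ with no residual idempotent to track, and your appeal to Proposition~\ref{PSubtle} and Corollary~\ref{CBasis} then goes through exactly as you wrote.
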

\begin{proof}
Follows from Lemma~\ref{LYRight}, Proposition~\ref{PSubtle}, and Corollary~\ref{CBasis}. 
\end{proof}

\begin{Lemma}\label{LPsiStr}%{\rm \cite{}}%{\bf ()}
Let $\mu\in\Par_d$ and $\T \in\St(\mu)$. 
If $r\downarrow_\T  r+1$ or $r\rightarrow_\T  r+1$ %, or $r\nearrow_\T r+1$. 
then
$$
\psi_rv_\T =\sum_{\Stab\in \St(\mu),\ \Stab\lhd \T,\ \bi^\Stab=\bi^{s_r\T}}a_\Stab v_\Stab\qquad(a_\Stab\in F).
$$
%In particular, $\ell(\psi_rv_\T )<\ell(\T )$.
\end{Lemma}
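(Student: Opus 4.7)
The plan is to expand $\psi_r v_\T = \psi_r \psi_{w_\T} z_\mu$ via Proposition~\ref{PSubtle}, use that $y_s$ annihilates $z_\mu$, and then control the Bruhat support of the resulting standard expansion using Lemma~\ref{L211108} and a weight argument.

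First I would verify two combinatorial facts. In both hypothesis cases the tableau $s_r\T$ fails to be standard (row- or column-strictness is violated between the entries $r$ and $r+1$). Moreover, in both cases the entry $r$ occupies a node of $\T$ that is earlier than the node of $r+1$ in the row-reading order, so $w_\T^{-1}(r) < w_\T^{-1}(r+1)$ and hence $\ell(s_r w_\T) = \ell(w_\T) + 1$. Consequently, prepending $s_r$ to the preferred reduced decomposition $w_\T = s_{r_1}\cdots s_{r_m}$ yields a reduced decomposition of $s_r w_\T = w_{s_r\T}$.

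Next, apply Proposition~\ref{PSubtle}(i) with $\bi = \bi^\mu$ to obtain
\begin{equation*}
\psi_r\,\psi_{w_\T}\,e(\bi^\mu) \;=\; \psi_{s_r w_\T}\,e(\bi^\mu) \;+\; \sum_u \psi_u\, f_u(y)\, e(\bi^\mu),
\end{equation*}
where $u < s_r w_\T$ in Bruhat order and $f_u(y)\in F[y_1,\dots,y_d]$. Right-multiplying by $z_\mu$ and using $e(\bi^\mu)z_\mu = z_\mu$ and $y_s z_\mu = 0$ from Lemma~\ref{LZ1}, each polynomial $f_u(y)$ collapses to its constant term, and we get
\begin{equation*}
\psi_r v_\T \;=\; v_{s_r\T} \;+\; \sum_u c_u\, v_{u\T^\mu}
\end{equation*}
for scalars $c_u\in F$, where $v_{s_r\T}$ refers to the (non-standard) tableau with $w_{s_r\T} = s_r w_\T$.

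Now expand each term in the standard $v$-basis via Corollary~\ref{CBasis}. Each $v_{u\T^\mu}$ produces $v_{\Stab'}$ with $\Stab'\in\St(\mu)$ and $w_{\Stab'}\leq u < s_r w_\T$, so $\Stab'\lhd s_r\T$ strictly. Likewise $v_{s_r\T}$ produces $v_{\Stab'}$ with $\Stab'\unlhd s_r\T$, and since $s_r\T$ is not standard every such $\Stab'$ must satisfy $\Stab'\lhd s_r\T$ strictly. Applying Lemma~\ref{L211108} converts $\Stab'\lhd s_r\T$ into $\Stab'\unlhd\T$. To rule out $\Stab' = \T$, I invoke weights: by (\ref{R2PsiE}) and Lemma~\ref{LVWeight}, the vector $\psi_r v_\T$ lies in $e(\bi^{s_r\T})S(\mu)$, so only $\Stab'$ with $\bi^{\Stab'} = \bi^{s_r\T}$ survive. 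Since the residues of $r$ and $r+1$ in $\T$ differ by $\pm 1$, which is nonzero in $I = \Z/e\Z$ (as $e \neq 1$), we have $\bi^\T\neq\bi^{s_r\T}$, forcing $\Stab'\neq\T$ and hence $\Stab'\lhd\T$ strictly, while simultaneously giving the asserted weight condition.

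The main obstacle is the bookkeeping of Bruhat bounds through two successive expansions: Proposition~\ref{PSubtle} yields the natural bound $\lhd s_r\T$, and Lemma~\ref{L211108} is the key combinatorial tool that translates this into $\unlhd\T$; the residue mismatch between $r$ and $r+1$ then upgrades this to the strict inequality and pins down the weight.
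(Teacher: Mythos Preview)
Your argument is correct and follows essentially the same route as the paper's proof. The paper abbreviates your first few steps by invoking Proposition~\ref{PVReduced} (which is itself Proposition~\ref{PSubtle}(i) combined with Corollary~\ref{CBasis}) directly on $s_r\T$, then applies Corollary~\ref{CBasis}, Lemma~\ref{LVWeight}, and Lemma~\ref{L211108} exactly as you do, finishing with the same weight argument $\bi^{s_r\T}\neq\bi^\T$ to upgrade $\unlhd\T$ to $\lhd\T$.
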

\begin{proof}
Let $s_{r_1}\dots s_{r_m}$ be the preferred reduced decomposition of $w_\T$. Note that $\ell(s_rw_\T)=\ell(w_\T)+1$, so 
$
s_rs_{t_1}\dots s_{t_m}
$
is a reduced decomposition of $s_rw_\T$. 
Since $s_rw_\T=w_{s_r\T}$, by Proposition~\ref{PVReduced}, we have
\begin{align*}
\psi_rv_\T=\psi_r\psi_{r_1}\dots\psi_{r_m}z_\mu=v_{s_r\T}+\sum_{\Stab\in\St(\mu),\ \Stab\lhd s_r\T} a_\Stab v_\Stab.%\qquad(a_\Stab\in F).
\end{align*}
By Corollary~\ref{CBasis}, and taking Lemma~\ref{LVWeight} into account, we can now write (using different scalars $a_\Stab$):
%$$\psi_rv_\T=\sum_{\Stab\in\St(\mu),\ \Stab\lhd s_r\T} a_\Stab v_\Stab.$$
%Now, taking Lemma~\ref{LVWeight}, we can improve on the last equality to make it:
$$
\psi_rv_\T=\sum_{\Stab\in\St(\mu),\ \Stab\lhd s_r\T,\ \bi^\Stab=\bi^{s_r\T}} a_\Stab v_\Stab.
$$
By Lemma~\ref{L211108}, $\Stab\in \St(\mu)$ and $\Stab\lhd s_r\T$ imply $\Stab\unlhd \T$. But now $\bi^\Stab=\bi^{s_r\T}\neq\bi^T$ implies that $\Stab\lhd \T$.
\end{proof}

\subsection{Main Theorem}
We continue working with a fixed $\mu\in\Par_d$. 
Recall that the definition of the vectors $v_\T\in S(\mu) $ depends on a choice of a reduced decomposition for $w_\T $ in $\Sigma_d$. 
Define the {\em degree} of $v_\T $ to be 
$$
\deg(v_\T ):=\deg(\T).
$$
As $\{v_\T \mid \T \in \St(\mu)\}$ is a basis of $S(\mu) $ by Corollary~\ref{CBasis}, this makes $S(\mu) $ a $\Z$-graded {\em vector space}. Since the vectors $v_\T $ depend on the choice of reduced decompositions, our grading on $S(\mu)$ might also depend on it. However, Theorem~\ref{TMain}(i) below shows that this
is not the case. 
Part (ii) of the theorem shows moreover that our vector space grading 
makes $S(\mu)$ into a graded $H^\La_d$-module.

%Let us make a more precise comment on this point. Let $\T \in \St(\mu)$ and $w=w_\T $, i.e. $w\\T^\mu=\T $. Choose a reduced decomposition $w=s_{r_1}\dots s_{r_m}$. Then the element $v_\T $ corresponding to this fixed choice is $v_\T =\psi_{r_1}\dots\psi_{r_m}z_\mu$. Since $z_\mu$ is of weight $\bi^\mu$ by Lemma~\ref{LZ1}, we can also write $$v_\T =\psi_{r_1}\dots\psi_{r_m}e(\bi^\mu)z_\mu.$$ Now, if $w=s_{t_1}\dots s_{t_m}$ is another reduced decomposition, the corresponding  homogeneous basis vector would be $$v_{\T }'=\psi_{t_1}\dots\psi_{t_m}e(\bi^\mu)z_{\mu}. $$

\begin{Theorem}\label{TMain}%{\rm \cite{}}%{\bf ()}
Let $\mu\in\Par_d$ and $\T\in\St(\mu)$. 
\begin{enumerate}
\item[{\rm (i)}] If $w_\T =s_{r_1}\dots s_{r_m}=s_{t_1}\dots s_{t_m}$ are two reduced decompositions of $w_\T$, then
$$
\psi_{r_1}\dots \psi_{r_m}z_\mu-\psi_{t_1}\dots \psi_{t_m}z_\mu=\sum_{\Stab\in\St(\mu),\ \Stab\lhd \T,\ \bi^\Stab=\bi^\T,\ \deg(\Stab)=\deg(\T )} a_\Stab v_\Stab
$$
for some scalars $a_\Stab\in F$. In particular, our grading on $S(\mu) $ is independent of the choice of reduced decompositions. 
\item[{\rm (ii)}]
For each $r$, the vectors $y_r v_\T$ and $\psi_r v_\T$ 
are homogeneous, 
and we have that
\begin{align*}
e(\bi) v_\T &=\de_{\bi,\bi^\T }v_\T\qquad\qquad\qquad\qquad\quad\,(\bi \in I^d),\\ 
\deg(y_r v_\T )&=\deg(y_r)+\deg(v_\T )\qquad\qquad\:\:(1 \leq r \leq d),\\
\deg(\psi_rv_\T )&=\deg(\psi_re(\bi^\T ))+\deg(v_\T ) \qquad(1 \leq r < d).
\end{align*} 
In particular, our grading makes $S(\mu)$ into a graded
$H_d^\La$-module. 
\end{enumerate}
\end{Theorem}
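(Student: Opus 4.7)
The plan is to establish parts~(i) and~(ii) simultaneously by induction on the Bruhat order of $\T\in\St(\mu)$. The base case $\T=\T^\mu$ is immediate: Lemma~\ref{LZ1} gives $e(\bi)z_\mu=\delta_{\bi,\bi^\mu}z_\mu$ and $y_rz_\mu=0$, while $\psi_r z_\mu$ either equals $v_{s_r\T^\mu}$ with the correct degree shift (by Proposition~\ref{PDeg}) or vanishes by Lemma~\ref{LPsiStr} (when $s_r\T^\mu$ is non-standard, since no tableau lies strictly below $\T^\mu$).

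For the inductive step of part~(i), I apply Proposition~\ref{PSubtle}(i) to write the difference of the two $\psi$-products as a linear combination of homogeneous terms $\psi_u f(y) e(\bi^\mu)$ with $u<w_\T$ and $H_d^\La$-degree equal to $\deg(\T)-\deg(\T^\mu)$ (using Corollary~\ref{CDeg}). Multiplying by $z_\mu$ and using Lemma~\ref{LZ1} (so $f(y)z_\mu=f(0)z_\mu$), only the constant-$f$ contributions survive, each equal to $c\cdot\psi_u z_\mu=c\cdot v_{u\T^\mu}$. Expanding $v_{u\T^\mu}$ in the standard basis via Corollary~\ref{CBasis} produces the required linear combination of $v_\Stab$ with $\Stab\in\St(\mu)$ and $\Stab\unlhd u\T^\mu\lhd\T$. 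The weight condition $\bi^\Stab=\bi^\T$ then follows by applying $e(\bi^\T)$ to both sides, and the degree condition $\deg(\Stab)=\deg(\T)$ is forced by the inductive hypothesis on part~(ii) applied to each $v_{u\T^\mu}$ (whose expansion must be homogeneous of the common degree $\deg(\psi_u e(\bi^\mu))+\deg(\T^\mu)=\deg(\T)$).

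For the inductive step of part~(ii), the weight statement is Lemma~\ref{LVWeight}. For $y_r v_\T$, I apply Lemma~\ref{LYRight} to $y_r\psi_{r_1}\cdots\psi_{r_m}e(\bi^\mu)$, obtaining homogeneous terms $\psi_{r_{a_1}}\cdots\psi_{r_{a_b}}g(y)e(\bi^\mu)$ of degree $\deg(\T)-\deg(\T^\mu)+2$; applied to $z_\mu$, only constant $g$'s survive, and the resulting $\psi$-products expand (via Proposition~\ref{PVReduced} and Corollary~\ref{CBasis}) into $v_\Stab$ with $\Stab\lhd\T$ of the correct degree $\deg(\T)+2$ by induction. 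For $\psi_r v_\T$ when $s_r\T$ is standard (covering both the edge $\T\stackrel{r}{\longrightarrow}s_r\T$ and its reverse $s_r\T\stackrel{r}{\longrightarrow}\T$), I choose a reduced decomposition of $w_{s_r\T}$ obtained by prepending $s_r$ to, or removing $s_r$ from, the preferred decomposition of $w_\T$, apply Proposition~\ref{PVReduced}, and invoke Proposition~\ref{PDeg} to match degrees between $v_{s_r\T}$ and $v_\T$, with lower-order terms handled by induction.

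The main obstacle is the remaining case where $s_r\T$ is non-standard, i.e., $r\rightarrow_\T r+1$ or $r\downarrow_\T r+1$. Here Lemma~\ref{LPsiStr} yields $\psi_r v_\T=\sum a_\Stab v_\Stab$ with $\Stab\lhd\T$ and $\bi^\Stab=\bi^{s_r\T}$, and one must verify $\deg(\Stab)=\deg(\T)-a_{i_r,i_{r+1}}$ for each nonzero coefficient. The strategy is to induct along the weak Bruhat graph via Propositions~\ref{PAbove} and~\ref{PNext}, which reduce the verification to two terminal cases: either $\T=\T^\mu$ (handled in the base case) or $\T$ is a Garnir tableau. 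In the inductive terminal-move steps, when $\T$ is $t$-terminal with $|t-r|>1$ we write $\T=s_t\T'$ and exploit $\psi_t\psi_r=\psi_r\psi_t$ from~(\ref{R3Psi}) together with the inductive hypothesis on $\T'$; when $\T$ is $(r,r\pm 1)$-terminal, we use the braid relation~(\ref{R7}) combined with induction (the extra $\pm 1$ and $y$ terms from~(\ref{R7}) behave correctly because $y_rz_\mu=0$ and Lemma~\ref{LYRight} lets us sweep the $y$'s to the right to be annihilated on $z_\mu$). The Garnir-tableau base case is settled directly by Proposition~\ref{PGarnir}, which provides exactly the required degree match.
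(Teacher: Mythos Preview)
Your overall strategy coincides with the paper's: the same case analysis via Propositions~\ref{PAbove} and~\ref{PNext}, the same use of Lemma~\ref{LPsiStr}, and the Garnir base case via Proposition~\ref{PGarnir}. However, your choice to induct on the Bruhat order of $\T$, proving (i) and (ii) for $\T$ assuming both for all $\Stab\lhd\T$, leaves a genuine gap in the case $\T\stackrel{r}{\longrightarrow}s_r\T$ (i.e.\ $s_r\T$ standard with $\ell(s_r\T)=\ell(\T)+1$).

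In that case you write $\psi_r v_\T = v_{s_r\T}+(\text{lower-order terms})$ via Proposition~\ref{PVReduced} and say the lower terms are ``handled by induction.'' But Proposition~\ref{PVReduced} only tells you those terms are $v_\Stab$ with $\Stab\lhd s_r\T$; it gives no degree information. To conclude they all have degree $\deg(s_r\T)$ you need precisely part~(i) for $s_r\T$, and since $s_r\T\rhd\T$ this is \emph{not} supplied by your Bruhat inductive hypothesis. Nor can you recover it by the iterative argument you use elsewhere: the error terms from Proposition~\ref{PSubtle}(i) applied to $w_{s_r\T}$ are $\psi_u f(y)e(\bi^\mu)$ with $u<w_{s_r\T}$, and such $u$ need not satisfy $u<w_\T$ (already in $\Sigma_3$ one has $s_2<s_2s_1$ but $s_2\not<s_1$), so the intermediate tableaux in the iteration need not lie strictly below $\T$ in Bruhat order.

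The paper resolves this by organizing the induction on length with a deliberate offset: at stage $m$ one proves (i) for all $\T$ with $\ell(\T)\leq m$ \emph{first}, and only then (ii) for all $\T$ with $\ell(\T)\leq m-1$. Thus, when establishing (ii) for $\T$ of length $m-1$, part~(i) for $s_r\T$ of length $m$ is already available, and the lower-order terms in $\psi_r v_\T$ inherit the correct degree directly from it. Replacing your Bruhat induction by this length-with-offset scheme fixes the argument; the rest of your outline then goes through essentially as in the paper.
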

\begin{proof}
It suffices to prove the following claim by induction on $m$:

\vspace{2mm}
\noindent
{\bf Claim.} Let $m\geq 0$. Then:
\begin{enumerate}
\item[{\rm (a)}] the statement (i) holds for all $\T\in\St(\mu) $ with $\ell(\T )\leq m$;
\item[{\rm (b)}] the statement (ii) holds for all $\T\in\St(\mu) $ with $\ell(\T )\leq m-1$. 
\end{enumerate}  

\vspace{2 mm}
If $m=0$, then $\T =\T ^\mu$, $w_\T =1$, and there is nothing to prove. Now 
let $m>0$. To prove part (a) of the claim, by Matsumoto's Theorem, we 
may assume that one can go from $s_{r_1}\dots s_{r_m}$ to $s_{t_1}\dots s_{t_m}$ 
by applying one braid relation. If this is a relation of the form 
$s_rs_t=s_ts_r$, then $\psi_{r_1}\dots \psi_{r_m}z_\mu=\psi_{t_1}\dots \psi_{t_m}z_\mu$ by  (\ref{R3Psi}). 

So let us assume that we can go from $s_{r_1}\dots s_{r_m}$ to $s_{t_1}\dots s_{t_m}$ by applying a Coxeter relation of the form $s_rs_{t}s_r=s_{t}s_rs_{t}$ for $|r-t|=1$. Then the relations (\ref{R2PsiE}) and (\ref{R7}) imply that  
$
(\psi_{r_1}\dots \psi_{r_m}-\psi_{t_1}\dots \psi_{t_m})e(\bi^\mu)
$ 
is either zero or an element of the form 
\begin{equation}\label{EForm}
\psi_{r_1}\dots \psi_{r_k}f(y)\psi_{r_{k+4}}\dots\psi_{r_m}e(\bi^\mu)
\end{equation}
where $f(y)$ is a polynomial in $y$'s. 
Since the relations in $H_d^\La$ are homogeneous, the degree of the element (\ref{EForm}) in $H_d^\La$ is the same as the degree 
of $\psi_{r_1}\dots \psi_{r_m}e(\bi^\mu)$. By Corollary~\ref{CDeg},  
$
\deg(\T )-\deg(\T^\mu)=\deg(\psi_{r_1}\dots \psi_{r_m}e(\bi^\mu)).
$
So the degree of the element (\ref{EForm}) in $H_d^\La$ is $\deg(\T )-\deg(\T^\mu)$. 

Using Lemma~\ref{LYRight} and Proposition~\ref{PSubtle}, we can write (\ref{EForm}) as a linear combination of elements of the same degree and the form 
$$
\psi_{r_{a_1}}\dots \psi_{r_{a_b}}g(y)e(\bi^\mu)
$$
such that $g(y)$ is a polynomial in $y$'s, $1\leq a_1<\dots<a_b\leq m$, $b<m$ (actually $b\leq m-3$), and $s_{r_{a_1}}\dots s_{r_{a_b}}$ is a reduced word. 
So, by Lemma~\ref{LZ1}, % and Proposition~\ref{PVReduced} that 
$$
(\psi_{r_1}\dots \psi_{r_m}-\psi_{t_1}\dots \psi_{t_m})z_\mu=(\psi_{r_1}\dots \psi_{r_m}-\psi_{t_1}\dots \psi_{t_m})e(\bi^\mu) z_\mu
$$
is a linear combination of elements of the form
\begin{equation}\label{EForm2}
\psi_{r_{a_1}}\dots \psi_{r_{a_b}} z_\mu
\end{equation}
such that $\deg(\psi_{r_{a_1}}\dots \psi_{r_{a_b}}e(\bi^\mu))+\deg(\T^\mu)=\deg(\T)$, $1\leq a_1<\dots<a_b\leq m$, $b<m$, and $s_{r_{a_1}}\dots s_{r_{a_b}}$ is a reduced word. 
 
By Proposition~\ref{PVReduced} and Corollary~\ref{CBasis}, elements of the form (\ref{EForm2}) can be written as linear combinations of basis elements $v_\Stab$ for $\Stab\lhd \T$. 
On the other hand, it follows from the inductive assumption (for part (b)) that 
elements of the form (\ref{EForm2}) have degree equal to $\deg(v_\T)$. Hence the basis elements $v_\Stab$ which appear in the decomposition of (\ref{EForm2}) have degree equal to $\deg(v_\T)$. Finally, since both vectors $\psi_{r_1}\dots \psi_{r_m}z_\mu$ and $\psi_{t_1}\dots \psi_{t_m}z_\mu$ have weight $\bi^\T $, part (a) now follows from Lemma~\ref{LVWeight}. 

%, we conclude that
%$$
%\ell(\psi_{r_1}\dots \psi_{r_k}f(y_{r},y_{r+1},y_{r+2})\psi_{r_{k+4}}\dots\psi_{r_m}z_\mu)<\ell(\T ),
%$$
%and 
%\begin{align*}
%&\deg(\psi_{r_1}\dots \psi_{r_k}f(y_{r},y_{r+1},y_{r+2})\psi_{r_{k+4}}\dots\psi_{r_m}z_\mu)\\
%=&\deg(\psi_{r_1}\dots \psi_{r_k}f(y_{r},y_{r+1},y_{r+2})\psi_{r_{k+4}}\dots\psi_{r_m}e(\bi^\mu))+\deg(z_\mu)\\
%=&\deg(\T )-\deg(\T^\mu)+\deg(z_\mu)=\deg(\T ).
%\end{align*}
%Since both vectors $\psi_{r_1}\dots \psi_{r_m}z_\mu$ and $\psi_{t_1}\dots \psi_{t_m}z_\mu$ have weight $\bi^\T $, part (a) now follows from Lemma~\ref{LVWeight}. 

Now let us prove  part (b) for $\ell(\T)=m-1$. First, $e(\bi) v_\T =\de_{\bi,\bi^\T }v_\T $ comes from Lemma~\ref{LVWeight}. Let 
$w_\T =s_{r_1}s_{r_2}\dots s_{r_{m-1}}$ be our preferred reduced decomposition, so that  
$$v_\T =\psi_{r_1}\psi_{r_2}\dots\psi_{r_{m-1}}z_\mu=\psi_{r_1}\psi_{r_2}\dots\psi_{r_{m-1}}e(\bi^\mu)z_\mu.$$
Let $1\leq r\leq d$. 
Recall that $\deg(y_r)=2$. From (\ref{R6}) and (\ref{R5}) we have 
$$
y_rv_\T =\psi_{r_1}y_{s_{r_1}(r)}\psi_{r_2}\dots\psi_{r_{m-1}}z_\mu+(*)
$$
where $(*)$ is either zero or $\pm\psi_{r_2}\dots\psi_{r_{m-1}}z_\mu$, in 
which case we have
$$
\deg(\psi_{r_2}\dots\psi_{r_{m-1}}e(\bi^\mu))=2+\deg(\psi_{r_1}\psi_{r_2}\dots\psi_{r_{m-1}}e(\bi^\mu)).
$$
By the inductive assumption, we know that $(*)$ is a homogeneous 
element of $S(\mu)$ 
of degree $2+\deg(v_\T )$ and that
$y_{s_{r_1}(r)}\psi_{r_2}\dots\psi_{r_{m-1}}z_\mu$ is homogeneous of degree
$2+\deg(\psi_{r_2}\dots\psi_{r_{m-1}}z_\mu)$.
Moreover, by Lemma~\ref{LVK}, the element
$y_{s_{r_1}(r)}\psi_{r_2}\dots\psi_{r_{m-1}}z_\mu
$
is a linear combination of the basis elements $v_\Stab$ with $\ell(\Stab)\leq m-2$. 
So, by inductive assumption again and Corollary~\ref{CDeg}, we conclude that
$\psi_{r_1}y_{s_{r_1}(r)}\psi_{r_2}\dots\psi_{r_{m-1}}z_\mu$ is homogeneous and
\begin{align*}
\deg(\psi_{r_1}&y_{s_{r_1}(r)}\psi_{r_2}\dots\psi_{r_{m-1}}z_\mu)\\
&=\deg(\psi_{r_1}e(\bi^{s_{r_1}  \T }))+\deg(y_{s_{r_1}(r)}\psi_{r_2}\dots\psi_{r_{m-1}}z_\mu)
\\
&=\deg(\psi_{r_1}e(\bi^{s_{r_1}  \T }))+2+\deg(\psi_{r_2}\dots\psi_{r_{m-1}}z_\mu)
=2+\deg(v_\T ),
\end{align*}
as required. 

Now, let $1\leq r<d$. To deal with $\psi_rv_\T$, we consider several cases.

\vspace{1mm}
\noindent
{\em Case 1:} $r+1\nearrow_\T  r$ or $r$ is in an earlier component of $\T$ than $r+1$. In this case the tableau $s_r  \T $ is standard,
and $s_rs_{r_1}\dots s_{r_{m-1}}$ is a reduced decomposition for $w_{s_r  \T }$. 
So
$$
\deg(s_r  \T )=\deg(\psi_re(\bi^\T ))+\deg(\T ),
$$
by Proposition~\ref{PDeg}. 
Moreover, $\ell(s_r  \T)=m$, so by part (a) of the claim, which has already been proved for the tableaux of length $m$, we have 
$$
\psi_rv_\T =\psi_r\psi_{r_1}\psi_{r_2}\dots\psi_{r_{m-1}}z_\mu=v_{s_r  \T }+\sum_{\deg(\Stab)=\deg(s_r  \T )}a_\Stab v_\Stab,
$$
which is a homogeneous element of degree $\deg(\psi_re(\bi^\T ))+\deg(v_\T )$, as required. 

\vspace{1mm}
\noindent
{\em Case 2:} $r\nearrow_\T  r+1$ or $r$ is in a later component of $\T$ than $r+1$. In this case the tableau $s_r  \T $ is standard and $\ell(s_r  \T )=m-2$. Let $w_{s_r  \T }=s_{t_1}\dots s_{t_{m-2}}$ be a reduced decomposition.  Then $s_rs_{t_1}\dots s_{t_{m-2}}$ is a reduced decomposition for $w_\T $. By the inductive assumption for part (a), we have 
$$
\psi_{r}\psi_{t_1}\dots\psi_{t_{m-2}}z_\mu=v_\T +\sum_{\ell(\Stab)<\ell(\T ),\ \deg(\Stab)=\deg(\T ),\ \bi^\Stab=\bi^\T}a_\Stab v_\Stab.
$$
Hence
$$
\psi_rv_\T =\psi_{r}^2\psi_{t_1}\dots\psi_{t_{m-2}}z_\mu-\sum_{\ell(\Stab)<\ell(\T ),\ \deg(\Stab)=\deg(\T ),\ \bi^\Stab=\bi^\T}a_\Stab\psi_rv_\Stab.
$$
By the inductive assumption (for part (b)), the sum on the right hand side is homogeneous of the right degree $\deg(\psi_re(\bi^\T))+\deg(v_\T )$. Moreover, by the relation (\ref{R4}), the first term in the right hand side has the form
$f(y)\psi_{t_1}\dots\psi_{t_{m-2}}z_\mu$,
for some polynomial $f(y)$ in $y$'s and 
$$
\deg(f(y)\psi_{t_1}\dots\psi_{t_{m-2}}e(\bi^\mu))=\deg(\psi_re(\bi^\T ))+\deg(\psi_{r}\psi_{t_1}\dots\psi_{t_{m-2}}e(\bi^\mu)).
$$
Now another application of the inductive assumption %and Lemma~\ref{LVK} 
completes the proof.

\vspace{1mm}
\noindent
{\em Case 3:} $r\rightarrow_\T  r+1$. By Proposition~\ref{PAbove}, we have the following %four 
subcases:

\begin{enumerate}
\item[{\rm (1)}] there is $t$ with $|t-r|>1$ such that $\T $ is $t$-terminal, and $r\rightarrow_{s_t  \T }r+1$;
\item[{\rm (2)}] $\T $ is $(r,r+1)$-terminal, and $r+1\rightarrow_{s_rs_{r+1}  \T }r+2$;
\item[{\rm (3)}] $\T $ is $(r,r-1)$-terminal, and $r-1\rightarrow_{s_rs_{r-1}  \T }r$;
\item[{\rm (4)}] $\T =\T^\mu$.
\end{enumerate} 

In the subcase (1), by the inductive assumption for (a), we can write
$$
v_\T =\psi_t\psi_{t_1}\dots\psi_{t_{m-2}}z_\mu+\sum_{\ell(\Stab)<\ell(\T ),\ \deg(\Stab)=\deg(\T ),\ \bi^\Stab=\bi^\T}a_\Stab v_\Stab,
$$
and so by the inductive assumption for (b), it suffices to prove that the element
$\psi_r\psi_t\psi_{t_1}\dots\psi_{t_{m-2}}z_\mu$ is homogeneous with
$$
\deg(\psi_r\psi_t\psi_{t_1}\dots\psi_{t_{m-2}}z_\mu)=\deg(\psi_re(\bi^\T ))+\deg(\psi_t\psi_{t_1}\dots\psi_{t_{m-2}}z_\mu). 
$$
By (\ref{R3Psi}), 
$
\psi_r\psi_t\psi_{t_1}\dots\psi_{t_{m-2}}z_\mu=\psi_t\psi_r\psi_{t_1}\dots\psi_{t_{m-2}}z_\mu.
$
Since $s_{t_1}\dots s_{t_{m-2}}$ is a reduced decomposition of $w_{s_t\T }$, we may assume by the inductive assumption for part (a) (paying a price of an element of smaller length but the same degree if necessary) that 
$
\psi_{t_1}\dots\psi_{t_{m-2}}z_\mu=
v_{s_t\T }.
$
Now, by the inductive assumption and Lemma~\ref{LPsiStr}, we can write 
$
\psi_rv_{s_t\T }
$
as a linear combination of elements $v_\Stab$ such that $\Stab\in\St(\mu)$, $\ell(\Stab)< \ell(s_t\T)=m-2$, and 
$$
\deg(\Stab)=\deg(\psi_re(\bi^{s_t\T}))+\deg(v_{s_t\T}).
$$
By the inductive assumption again, we deduce that
$\psi_r \psi_t \psi_{t-1}\dots\psi_{t_{m-2}}z_\mu = \psi_t \psi_r v_{s_t\T}$
is homogeneous with
\begin{align*}
\deg(\psi_r\psi_t\psi_{t_1}\dots\psi_{t_{m-2}}z_\mu)&=\deg(\psi_t\psi_rv_{s_t\T})
\\&=\deg(\psi_te(\bi^{s_rs_t\T}))+\deg(\psi_rv_{s_t\T})
\\
&=\deg(\psi_te(\bi^{s_rs_t\T}))+\deg(\psi_re(\bi^{s_t\T}))+\deg(v_{s_t\T})
\\&=\deg(\psi_t\psi_re(\bi^{s_t\T}))+\deg(v_{s_t\T})
\\&=\deg(\psi_r\psi_te(\bi^{s_t\T}))+\deg(v_{s_t\T})
\\&=\deg(\psi_re(\bi^\T))+\deg(\psi_te(\bi^{s_t\T}))+\deg(v_{s_t\T})
\\&=\deg(\psi_re(\bi^\T ))+\deg(\psi_tv_{s_t\T})
\\&=\deg(\psi_re(\bi^\T ))+
\deg(\psi_t\psi_{t_1}\dots\psi_{t_{m-2}}z_\mu),
\end{align*}
as required.

In the subcase (2), there is a reduced decomposition 
$$w_\T =s_{r+1}s_r s_{t_1}\dots s_{t_{m-3}},$$ 
and again we may assume using the inductive assumption for part (a) that 
$$
v_{s_rs_{r+1}\T }=\psi_{t_1}\dots \psi_{t_{m-3}}z_\mu\quad\text{and}\quad 
v_{\T }=\psi_{r+1}\psi_r\psi_{t_1}\dots \psi_{t_{m-3}}z_\mu.
$$
Then the relation (\ref{R7}) and what has already been proved for the $y_r$'s imply
$$
\psi_rv_\T =\psi_r\psi_{r+1}\psi_rv_{s_rs_{r+1}\T }=\psi_{r+1}\psi_r\psi_{r+1}v_{s_rs_{r+1}\T }+(*),
$$
where $(*)$ is a homogeneous element of the right degree.   By the inductive assumption, 
$
\psi_{r+1}v_{s_rs_{r+1}\T }
$
is homogeneous of degree 
$$
\deg(\psi_{r+1}e(\bi^{s_rs_{r+1}\T }))+\deg(v_{s_rs_{r+1}\T }). 
$$
As $r+1\rightarrow_{s_rs_{r+1}  \T }r+2$, Lemma~\ref{LPsiStr} implies that 
$
\psi_{r+1}v_{s_rs_{r+1}\T }
$ 
can be written as a linear combination of elements $v_\Stab$ with $\Stab\in\St(\mu)$ and $\ell(\Stab)< m-3$. So using induction, 
$\psi_{r+1}\psi_r \psi_{r+1} v_{s_r s_{r+1}\T}$ is homogeneous and
\begin{align*}
%\deg(\psi_rv_\T )\\=
\deg(\psi_{r+1}&\psi_r\psi_{r+1}v_{s_rs_{r+1}\T })
\\=&\deg(\psi_{r+1}\psi_re(\bi^{s_{r+1}s_rs_{r+1}\T})+\deg(\psi_{r+1}e(\bi^{s_{r}s_{r+1}\T }))+\deg(v_{s_rs_{r+1}\T })
\\=&\deg(\psi_{r+1}\psi_r\psi_{r+1}e(\bi^{s_{r}s_{r+1}\T }))+\deg(v_{s_rs_{r+1}\T })
%\\=&\deg(\psi_{r+1}\psi_{r}\psi_{r+1}e(\bi^{s_rs_{r+1}\T }))+\deg(v_{s_rs_{r+1}\T })
\\=&\deg(\psi_{r}\psi_{r+1}\psi_{r}e(\bi^{s_rs_{r+1}\T }))+\deg(v_{s_rs_{r+1}\T })
\\=&\deg(\psi_{r}e(\bi^\T ))+\deg(\psi_{r+1}\psi_{r}e(\bi^{s_rs_{r+1}\T }))+\deg(v_{s_rs_{r+1}\T })
\\=&\deg(\psi_re(\bi^\T ))+\deg(v_\T ).
\end{align*}

The subcase (3) is completely similar to the subcase (2). In the subcase (4) (which only occurs if $m=1$), by Lemma~\ref{LPsiStr} we have $\psi_rv_{\T^\mu}=0$. 

\vspace{1mm}
\noindent
{\em Case 4:} $r\downarrow_\T  r+1$. By Proposition~\ref{PNext}, we need to consider the following four subcases:
\begin{enumerate}
\item[{\rm (1)}] there is $t$ with $|t-r|>1$ such that $\T $ is $t$-terminal, and $r\downarrow_{s_t\T } r+1$;
\item[{\rm (2)}] $\T $ is $(r,r+1)$-terminal, and $r+1\downarrow_{s_{r}s_{r+1}\T }r+2$;
\item[{\rm (3)}] $\T $ is $(r,r-1)$-terminal, and $r-1\downarrow_{s_{r}s_{r-1}\T }r$;
\item[{\rm (4)}] $\T$ is the $(a,b,n)$-Garnir tableau, where $r$ 
occupies the node $(a,b,n)$ in $\T$. 
\end{enumerate} 

The subcases (1)--(3) are proved similarly to the subcases (1)--(3) of case~3. 
Assume we are in the subcase (4). 
Using Lemma~\ref{LPsiStr}
we conclude that  $\psi_rv_\T $ is a linear combination of 
vectors $v_\Stab$ such that $\Stab\in\St(\mu)$, $\Stab\lhd\T$, and 
$\bi^\Stab=\bi^{s_r\T }$. 
Now, by Proposition~\ref{PGarnir}, we have 
$$
\deg(v_\Stab)=\deg(\Stab)=\deg(\T )-a_{i_r^\T ,i_{r+1}^\T }=\deg(v_\T )+\deg(\psi_re(\bi^\T )),
$$ 
as required. 
\end{proof}

\subsection{Branching rule for graded Specht modules} 
We observe by the definitions (\ref{EWS}), (\ref{QEPolKL}) and (\ref{QQCoxKL}) that the natural embedding
\begin{equation}\label{natemb}
H_{d-1}^\La \hookrightarrow H_{d}^\La
\end{equation}
sending $X_r$ to $X_r$ for $1\leq r \leq d-1$ and $T_r$ to $T_r$ for $1\leq r \leq d-2$
maps
\begin{align*}
e(\bi) &\mapsto \sum_{i \in I}
e(i_1,\dots,i_{d-1},i)\qquad(\bi \in I^{d-1}),\\
y_r &\mapsto y_r
\qquad\qquad\qquad\:\;\quad\qquad (1 \leq r \leq d-1),\\
\psi_s &\mapsto \psi_s
\qquad\qquad\qquad\:\,\qquad\quad(1 \leq s \leq d-2).
\end{align*}
Hence this embedding is a degree-preserving homomorphism of graded algebras.
The following is a graded version of the {\em branching rule} for 
the Specht modules obtained in \cite[Proposition 1.9]{AM}. 
Recall the integers $d_A(\mu)$ defined in (\ref{EDMUA}) and the notation 
$M\langle m\rangle$ 
from the introduction 
for a graded module $M$ with grading shifted up by $m$.

\begin{Theorem}\label{TBr}%{\rm \cite{}}%{\bf ()}
Let $\mu\in\Par_d$, and $A_1,\dots,A_b$ be all the removable nodes of $\mu$ in order from bottom to top. Then the restriction of $S(\mu)$ to $H_{d-1}^\La$
has a filtration 
$$
\{0\}=V_0\subset V_1\subset\dots\subset V_b= S(\mu) 
$$
as a graded $H_{d-1}^\La$-module
such that $V_m/V_{m-1}\cong S(\mu_{A_m})\langle d_{A_m}(\mu)\rangle$ for all $1\leq m\leq b$. 
\end{Theorem}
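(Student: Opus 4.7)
The plan is to mimic the ungraded branching rule of Ariki--Mathas \cite[Proposition 1.9]{AM} using the homogeneous basis $\{v_\T\}$ supplied by Theorem~\ref{TMain}, and then read off the grading shifts from the identity $\deg(v_\T)=\deg(\T)$. Order the removable nodes $A_1,\dots,A_b$ of $\mu$ from bottom to top; in particular $\mu_{A_1}\unrhd\cdots\unrhd\mu_{A_b}$ in the dominance order on $\Par_{d-1}$. For $0\leq m\leq b$, define
$$
V_m:=\spa\bigl\{v_\T\mid \T\in\St(\mu),\ \text{the entry $d$ of $\T$ lies in one of $A_1,\dots,A_m$}\bigr\}.
$$
Since each $v_\T$ is homogeneous of degree $\deg(\T)$ by Theorem~\ref{TMain}(ii), $V_m$ is automatically a graded subspace of $S(\mu)$.

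First I would check that each $V_m$ is an $H_{d-1}^\La$-submodule. The key observation, using the unitriangular change of basis in Proposition~\ref{PVZ} together with Lemma~\ref{LMathas} applied at $a=d-1$, is that $V_m$ coincides with the span of the $z_\T$ indexed by the same set of tableaux: if $\Stab\unlhd\T$ and $\T$ has entry $d$ at $A_m$, then $\sh(\Stab_{\leq d-1})\unrhd\mu_{A_m}$, and the only removable nodes $A$ of $\mu$ with $\mu_A\unrhd\mu_{A_m}$ are $A_1,\dots,A_m$, so $\Stab$ still has entry $d$ in one of these nodes. The ungraded branching rule \cite[Proposition 1.9]{AM} then tells us that this $z$-span is an $H_{d-1}^\La$-submodule, and hence so is $V_m$.

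Next I would construct the desired graded isomorphism $V_m/V_{m-1}\cong S(\mu_{A_m})\langle d_{A_m}(\mu)\rangle$. Via the bijection $\T\leftrightarrow\T_{\leq(d-1)}$ between $\{\T\in\St(\mu): d\text{ in }A_m\}$ and $\St(\mu_{A_m})$, define a linear map
$$
\phi_m\colon V_m/V_{m-1}\longrightarrow S(\mu_{A_m}),\qquad v_\T+V_{m-1}\mapsto v_{\T_{\leq(d-1)}},
$$
which is a linear bijection of bases. That $\phi_m$ intertwines the $H_{d-1}^\La$-actions follows from the Ariki--Mathas ungraded isomorphism $V_m/V_{m-1}\cong S(\mu_{A_m})$ on the $z$-bases, combined with the fact, already exploited above, that the unitriangular passage between the $v$- and $z$-bases respects the partition of standard tableaux by the node containing~$d$. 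Homogeneity of $\phi_m$ with the correct shift then comes straight from (\ref{EDegTab}): for $\T$ with $d$ at $A_m$,
$$
\deg(v_\T)=\deg(\T)=d_{A_m}(\mu)+\deg(\T_{\leq(d-1)})=d_{A_m}(\mu)+\deg(v_{\T_{\leq(d-1)}}),
$$
so $\phi_m$ is homogeneous of degree $-d_{A_m}(\mu)$, i.e.\ becomes degree preserving upon replacing the target by $S(\mu_{A_m})\langle d_{A_m}(\mu)\rangle$.

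The main obstacle is the middle step --- confirming that $\phi_m$ is a genuine module homomorphism rather than only a vector-space bijection, since the vectors $v_\T$ depend on the choice of preferred reduced decompositions from Section~\ref{SPsiW}. The cleanest resolution I envisage is to bootstrap off the ungraded result as sketched above; alternatively one could verify the intertwining directly from the action formulas in the proof of Theorem~\ref{TMain} together with Lemmas~\ref{LVK} and~\ref{LPsiStr}, using that the generators of $H_{d-1}^\La$ reduce to $e(\bi)$, $y_r$ ($r\leq d-1$) and $\psi_s$ ($s\leq d-2$), none of which can move entry $d$ out of $A_m$, and that any Bruhat-lower contributions which do appear involve tableaux with $d$ in a strictly lower $A_{m'}$ and therefore vanish modulo $V_{m-1}$.
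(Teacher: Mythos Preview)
Your proposal is correct and follows essentially the same approach as the paper: define the filtration via tableaux with $d$ in $A_1,\dots,A_m$, identify the $v$-span with the $z$-span using the unitriangularity of Proposition~\ref{PVZ} (together with Lemma~\ref{LMathas}), invoke \cite[Proposition 1.9]{AM} for the ungraded filtration and isomorphism, then transport back to the $v$-basis and read off the shift from (\ref{EDegTab}). The paper merely reverses the order, starting from the $z$-description of $V_m$ and deducing the $v$-description, but the content is identical.
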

\begin{proof}
For $m=0,\dots,b$, set
$$
V_m:=\spa\big\{z_\T\mid \T\in\St(\mu),\ \text{$d$ is located in one of the nodes $A_1,\dots,A_m$}\big\}.
$$
By the proof of \cite[Proposition 1.9]{AM}, this defines a filtration 
$$
\{0\} = V_0 \subset V_1 \subset\cdots\subset V_m = S(\mu)
$$
of
$S(\mu)$ as an (ungraded) $H_{d-1}^\La$-module,
and there is an isomorphism
$$
V_m/V_{m-1}\stackrel{\sim}{\rightarrow} S(\mu_{A_m})
$$ 
of ungraded $H_d^\La$-modules
such that 
$z_{\T}+V_{m-1} \mapsto z_{\T_{\leq(d-1)}}$
for each $\T \in \St(\mu)$ such that $d$ is located in the node 
$A_m$.
By the definition (\ref{vT}) and Proposition~\ref{PVZ},
we have equivalently that
$$
V_m = \spa\big\{v_\T\mid \T\in\St(\mu),\ \text{$d$ is located in one of the nodes $A_1,\dots,A_m$}\big\},
$$
and the isomorphism $V_m / V_{m-1} \stackrel{\sim}{\rightarrow} S(\mu_{A_m})$
maps 
$v_{\T}+V_{m-1} \mapsto v_{\T_{\leq(d-1)}}$
for each $\T \in \St(\mu)$ such that $d$ is located in the node $A_m$.
This shows that each $V_m$ is a graded subspace and
by
the definition of the degree in (\ref{EDegTab}) we have that
$V_m  / V_{m-1} \cong S(\mu_{A_m})\langle d_{A_m}(\mu)\rangle$
as graded $H^\La_{d-1}$-modules.
\end{proof}

\begin{Remark}\label{conjectures}\rm
There should be an analogous theorem to Theorem~\ref{TBr} 
describing a graded Specht
filtration of the induced module $H_{d+1}^\La \otimes_{H_{d}^\La} S(\mu)$.
More precisely, letting $B_1,\dots,B_c$ be all the addable nodes of $\mu$
in order from top to bottom, we expect that there is a
graded filtration
$$
\{0\} = W_0 \subset W_1 \subset \cdots \subset W_c = H_{d+1}^\La \otimes_{H_{d}^\La} S(\mu)
$$
such that $W_m / W_{m-1} \cong S(\mu^{A_m}) \langle 
d_{A_m}(\mu^{A_m})\rangle$ for
$1 \leq m \leq c$.
In \cite[Corollary 5.8]{new}, 
the first two authors prove the analogous statement to this
at the level of graded characters, which
is enough to show by induction 
that the graded dimension of $e(\bi) H_d^\La e(\bj)$ is equal to
\begin{equation}
\sum_{\substack{\mu \in \Par_d \\ \Stab, \T\in \St(\mu)\\
\bi^\Stab = \bi, \bi^\T = \bj}} q^{\deg(\Stab)+\deg(\T)},
\end{equation}
see also \cite[Theorem 4.20]{new}.
This formula suggests that $H^\La_d$ should possess a 
{\em graded} cellular basis
along the lines of (\ref{cellbase}) but defined in terms of the homogeneous generators.
In the special case $e = \infty$ and $l=2$,
such a basis is constructed in \cite[Theorem 6.9]{BS2}; see also 
\cite[Remark 6.10]{BS2}.
\iffalse
Enumerate the set $\{\T \in \Std(\mu)\:|\:\mu \in \Par_d\}$
as $\T_1,\dots,\T_n$ so that $\T_r \lhd \T_s$ imples $r < s$.
Then, arguing by induction on $d$
using the observation that 
$H_d^\La = H_d^\La \otimes_{H_{d-1}^\La} H_{d-1}^\La$
for the induction step, the existence of such a graded Specht filtration
would imply that $H_d^\La$ itself has a graded Specht filtration
$$
\{0\} = W_0 \subset W_1 \subset \cdots \subset W_n = H_d^\La
$$
such that $W_m / W_{m-1} \cong S(\mu_m)\langle \deg(\T_m)$,
where $\mu_m := \shape(\T_m)$.
In particular, this would show that the graded dimension of
\fi
\end{Remark}

\end{document}